\theoremstyle{definition}
\newtheorem{definition}[subsection]{Definition}
\newtheorem{remark}[subsection]{Remark}
\newtheorem{notation}[subsection]{Notation}
\theoremstyle{plain}
\newtheorem{lemma}[subsection]{Lemma}
\newtheorem{prop}[subsection]{Proposition}
\newtheorem{theorem}[subsection]{Theorem}
\newtheorem{cor}[subsection]{Corollary}
\numberwithin{equation}{section}
\newcommand{\Z}{\mathbb{Z}}
\newcommand{\Q}{\mathbb{Q}}
\newcommand{\R}{\mathbb{R}}
\newcommand{\C}{\mathbb{C}}
\newcommand{\F}{\mathbb{F}}
\newcommand{\A}{\mathbb{A}}
\newcommand{\cA}{\mathcal{A}}
\newcommand{\cD}{\mathcal{D}}
\newcommand{\cE}{\mathcal{E}}
\newcommand{\cF}{\mathcal{F}}
\newcommand{\cG}{\mathcal{G}}
\newcommand{\cH}{\mathcal{H}}
\newcommand{\cO}{\mathcal{O}}
\newcommand{\cP}{\mathcal{P}}
\newcommand{\cQ}{\mathcal{Q}}
\newcommand{\cHom}{\mathcal{H}\mathnormal{om}}
\newcommand{\op}{\mathrm{op}}
\newcommand{\Cat}{\mathcal{C}\mathnormal{at}}
\newcommand{\Ext}{\mathrm{Ext}}
\newcommand{\Gal}{\mathrm{Gal}}
\newcommand{\alg}{\mathrm{alg}}
\newcommand{\mix}{\mathrm{m}}
\newcommand{\mot}{\mathrm{mot}}
\newcommand{\Perv}{\mathrm{Perv}}
\newcommand{\FEt}{\mathrm{FEt}}
\newcommand{\Fin}{\mathrm{Fin}}
\newcommand{\Stk}{\mathrm{Stk}}
\newcommand{\rss}{\mathrm{ss}}
\newcommand{\Vect}{\mathrm{Vect}}
\newcommand{\Aut}{\mathrm{Aut}}
\newcommand{\Ker}{\mathrm{Ker}}
\newcommand{\GL}{\mathrm{GL}}
\newcommand{\Hom}{\mathrm{Hom}}
\newcommand{\Spec}{\mathrm{Spec}}
\newcommand{\Sh}{\mathrm{Shv}}
\newcommand{\Fr}{\mathrm{Frob}}
\newcommand{\lisse}{\mathrm{lisse}}
\newcommand{\tr}{\mathrm{tr}}
\newcommand{\for}{\underline{\mathrm{for}}}
\renewcommand{\lim}{\underline{\mathrm{lim}}}
\newcommand{\simto}{\xrightarrow{\sim}}
\newcommand{\Fqb}{\overline{\F_q}}
\newcommand{\Zb}{\overline{\Z}}
\newcommand{\Qb}{\overline{\Q}}
\newcommand{\Qlb}{\overline{\Q_\ell}}
\newcommand{\Zlb}{\overline{\Z_\ell}}
\newcommand{\Qlbp}{\overline{\Q_{\ell'}}}
\newcommand{\res}{\mathbin{|}}
\newcommand{\bIsom}{\mathbf{Isom}}
\newcommand{\bHom}{\mathbf{Hom}}
\begin{document}
\title{Companions on Artin stacks}
\author{Weizhe Zheng\thanks{Morningside Center of Mathematics and Hua Loo-Keng Key Laboratory
of Mathematics, Academy
of Mathematics and Systems Science, Chinese Academy of Sciences, Beijing
100190, China; University of the Chinese Academy of Sciences, Beijing
100049, China; email: \texttt{wzheng@math.ac.cn}. Partially supported by
China's Recruitment Program of Global Experts; National Natural Science
Foundation of China Grants 11321101, 11621061, 11688101; National Center for
Mathematics and Interdisciplinary Sciences, Chinese Academy of Sciences.}
\thanks{Mathematics Subject Classification 2010: 14F20 (Primary); 14G15,
14A20, 14D22 (Secondary).}}
\date{}
\maketitle

\begin{abstract}
Deligne's conjecture that $\ell$-adic sheaves on normal schemes over a
finite field admit $\ell'$-companions was proved by L.~Lafforgue in the
case of curves and by Drinfeld in the case of smooth schemes. In this
paper, we extend Drinfeld's theorem to smooth Artin stacks and deduce
Deligne's conjecture for coarse moduli spaces of smooth Artin stacks. We
also extend related theorems on Frobenius eigenvalues and traces to Artin
stacks.
\end{abstract}

\section*{Introduction}
Let $\F_q$ be a finite field and let $\ell$ and $\ell'$ be prime numbers not
dividing $q$. We let $\Qlb$ denote an algebraic closure of $\Q_\ell$.
Deligne conjectured \cite[Conjecture 1.2.10]{WeilII} that every lisse
$\Qlb$-sheaf on a normal scheme separated of finite type over $\F_q$ admits
a lisse $\overline{\Q_{\ell'}}$-companion. Drinfeld \cite[Theorem 1.1]{Dr}
proved this conjecture for smooth schemes. The goal of this paper is to
extend Drinfeld's theorem to smooth Artin stacks. We deduce that Deligne's
conjecture holds for coarse moduli spaces of smooth Artin stacks. We also
extend related theorems on Frobenius eigenvalues and traces to Artin stacks.

For an Artin stack $X$ of finite presentation over $\F_q$ and a Weil
$\Qlb$-sheaf $\cF$ on $X$, we let $E(\cF)$ denote the subfield of $\Qlb$
generated by the local Frobenius traces $\tr(\Fr_x, \cF_{\bar{x}})$, where
$x\in X(\F_{q^n})$ and $n\ge 1$. Here $\Fr_x=\Fr_{q^n}$ denotes the
geometric Frobenius, and $\bar x$ denotes a geometric point above $x$. Let
$E_{\lambda'}$ be an algebraic extension of $\Q_{\ell'}$ and let
$\sigma\colon E(\cF)\to E_{\lambda'}$ be a field embedding, \emph{not}
necessarily continuous. We say that a Weil $E_{\lambda'}$-sheaf $\cF'$ is a
\emph{$\sigma$-companion} of $\cF$ if for all $x\in X(\F_{q^n})$ with $n\ge
1$, we have $\tr(\Fr_x,\cF'_{\bar{x}})=\sigma \tr(\Fr_x,\cF_{\bar{x}})$.

Our main results on Frobenius eigenvalues and traces are as follows.

\begin{theorem}\label{t.2}
Let $X$ be a geometrically unibranch\footnote{For a short review of the
property ``geometrically unibranch'', see Remark \ref{r.unib}.} Artin stack
of finite presentation over $\F_q$ and let $\cF$ be a simple lisse
$\Qlb$-sheaf of rank $r$ on $X$ such that $\det(\cF)$ has finite order.
\begin{enumerate}
\item (Frobenius eigenvalues) Let $x\in X(\F_{q^n})$ and let $\alpha$ be
    an eigenvalue of $\Fr_x$ acting on $\cF_{\bar x}$. Then $\alpha$ is a
    $q$-Weil number of weight $0$.\footnote{We adopt the convention that a
    \emph{$q$-Weil number of weight $0$} is an algebraic number $\alpha$
such that for every place $\lambda$ of $\Q(\alpha)$ not dividing $q$
(finite or Archimedean), we have $\lvert \alpha\rvert_\lambda=1$.}
Moreover, for
    every valuation $v$ on $\Q(\alpha)$ such that $v(q^n)=1$, we have
    $\lvert v(\alpha)\rvert \le \frac{r-1}{2}$.
\item (Frobenius traces) The field $E(\cF)$ is a number field (namely, a
    finite extension of $\Q$).
\end{enumerate}
\end{theorem}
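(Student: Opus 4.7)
My plan is to reduce the theorem to the analogous result for geometrically unibranch schemes (due to Deligne, L.~Lafforgue, and Drinfeld) by pulling back along a smooth atlas. First I would choose a smooth surjective morphism $f\colon U\to X$ with $U$ smooth of finite type over $\F_q$, which exists since $X$ is of finite presentation. For any $x\in X(\F_{q^n})$, the fibre $U\times_X\Spec\F_{q^n}$ is a non-empty smooth scheme over $\F_{q^n}$, and a closed point of residue degree $k$ therein yields $u\in U(\F_{q^{nk}})$ whose image in $X(\F_{q^{nk}})$ is the base change of $x$. The canonical isomorphism $(f^*\cF)_{\bar u}\simto\cF_{\bar x}$ identifies $\Fr_u$ with $\Fr_x^k$, so the eigenvalues of $\Fr_u$ on $f^*\cF$ are the $k$-th powers of those of $\Fr_x$ on $\cF_{\bar x}$. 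It will therefore suffice to establish the Weil property, the valuation bound and the number-field property for $f^*\cF$: the conclusions for $\cF$ then follow by rescaling $v$ (passing from $v(q^{nk})=1$ to $v(q^n)=1$ preserves the bound $(r-1)/2$), by taking absolute values, and by applying Newton's identities to the power-sum traces $\tr(\Fr_x^k,\cF)=\tr(\Fr_u,f^*\cF)$ to place $\tr(\Fr_x,\cF)$ in any field containing them.

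The pullback $f^*\cF$ is lisse of rank $r$ on $U$ with $\det(f^*\cF)=f^*\det(\cF)$ of finite order, but typically not simple; I would decompose its semisimplification as $\bigoplus_j\cG_j^{\oplus m_j}$ with simple summands $\cG_j$ of rank $r_j\le r$. Using divisibility of $\Qlb^\times$, I would twist each $\cG_j$ by a rank-one Weil sheaf $\chi_j$ on $U$ so that $\det(\cG_j\otimes\chi_j)$ has finite order, and then apply the scheme case to each $\cG_j\otimes\chi_j$, obtaining the $q$-Weil property of weight $0$, the bound $(r_j-1)/2$ on eigenvalue valuations, and the fact that $E(\cG_j\otimes\chi_j)$ is a number field. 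For part (2) the untwist is harmless: $\chi_j(\Fr_u)^{r_j}$ is a root of unity times $\det(\cG_j)(\Fr_u)^{-1}$, so the compositum $K$ of the $E(\cG_j\otimes\chi_j)$ with the fields generated by the $\chi_j(\Fr_u)$ is a number field containing every $\tr(\Fr_u,f^*\cF)$, whence $E(\cF)\subseteq K$.

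The hard part will be the uniform valuation bound in (1). The twists yield only $|v(\alpha_i^{(j)})-\bar v_j|\le (r_j-1)/2$ where $\bar v_j=\frac{1}{r_j}\sum_i v(\alpha_i^{(j)})$, while the finite-order determinant of $\cF$ supplies only the single linear constraint $\sum_j m_jr_j\bar v_j=0$, which by itself does not bound the individual means. The essential additional input has to come from the simplicity of $\cF$ on $X$ (as opposed to the mere semisimplicity of $f^*\cF$ on $U$); I would exploit this either via a Clifford-theoretic analysis of how $\pi_1^W(U)\to\pi_1^W(X)$ permutes the constituents $\cG_j$ (making their determinants conjugate and hence equalizing the means), or via the $\sigma$-companions of $\cF$ supplied by the paper's main theorem, which impose compatible bounds on the Frobenius valuations simultaneously at every prime $\ell'\neq q$. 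Once the means $\bar v_j$ are controlled, rescaling $v$ as above gives $|v(\alpha)|\le (r-1)/2$ and completes (1).
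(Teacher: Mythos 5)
Your proposal takes a genuinely different route from the paper's, and as you yourself acknowledge in the final paragraph, it leaves a real gap that neither of your two suggested fixes closes.

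The difficulty is exactly where you locate it, but the proposed remedies do not work. For a general smooth atlas $f\colon U\to X$, a connected component $U_0$ of $U$ gives a map $\pi_1(U_0)\to\pi_1(X)$ whose image need not be normal, nor of finite index, nor all of $\pi_1(X)$; there is no group acting transitively on the constituents $\cG_j$ of $(f^*\cF|_{U_0})^{\rss}$, so a Clifford-theoretic argument that the $\det(\cG_j)$ (and hence the means $\bar v_j$) are conjugate is simply unavailable. Clifford theory \emph{does} apply when the cover is finite \'etale Galois, and that is precisely how the paper uses it (Lemma \ref{l.finite}, applied to the $G$-torsor $Y\to[Y/G]$); but reaching such a cover requires first showing the assertions can be checked after shrinking $X$ and then applying Behrend's d\'evissage through a gerbe over a Deligne--Mumford stack (Lemma \ref{l.Behrend}). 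Your second suggestion --- importing the $\sigma$-companions from Theorem \ref{t.1} --- is circular in the paper's architecture: the proof of Theorem \ref{t.1} opens by invoking Corollary \ref{c.Esub}, which rests on Theorem \ref{t.gen}, i.e.\ on the content of part~(2) of the present theorem.

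Part~(2) of your argument has a second gap of the same nature. You claim the compositum $K$ of the $E(\cG_j\otimes\chi_j)$ and the fields generated by the $\chi_j(\Fr_u)$ is a number field, but this presupposes that each $\chi_j(\Fr_u)$ is algebraic. Your justification --- that $\chi_j(\Fr_u)^{r_j}$ is a root of unity times $\det(\cG_j)(\Fr_u)^{-1}$ --- is circular, since the algebraicity of $\det(\cG_j)(\Fr_u)$ is precisely what is in question; the single finite-order relation $\det(f^*\cF)\simeq\bigotimes_j\det(\cG_j)^{\otimes m_j}$ does not determine the individual factors. (A minor additional point: one cannot in general arrange $U$ to be smooth over $\F_q$; $U$ is only geometrically unibranch, which suffices for the scheme case but not as stated.)

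For comparison, the paper's d\'evissage avoids these issues entirely. One first proves the assertions can be checked on any dense open substack (Remark \ref{r.shrink}, combining joining-by-curves, Proposition \ref{p.eigen}, with generic integrality, Proposition \ref{p.int}); one then shrinks $X$ to a gerbe over a Deligne--Mumford stack (Lemma \ref{l.Behrend}) and further to a quotient $[Y/G]$ with $G$ finite. For part~(1) the finite \'etale Galois cover $Y\to[Y/G]$ and Lemma \ref{l.finite} produce a decomposition into simples each with finite-order determinant, and the scheme case applies with the correct bound $r_j\le r$. For part~(2) one instead pulls back along the $\GL_m$-torsor $Z=Y\wedge^G\GL_m\to[Y/G]$: $Z$ is a scheme, every $\F_{q^n}$-point of $[Y/G]$ lifts to $Z$ by Hilbert's Theorem~90, so $E(f^*\cF)=E(\cF)$ exactly and Deligne's theorem for schemes applies directly, with no power-sum reconstruction and no auxiliary twists.
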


The statement of Theorem \ref{t.2}, with a slightly weaker bound for the
$p$-adic valuations, is conjectured to hold for normal schemes separated of
finite type over $\F_q$ by Deligne \cite[Conjecture 1.2.10
(i)--(iv)]{WeilII}. In the case of curves, the theorem with a weaker bound
is a consequence of the Langlands correspondence for $\GL(n)$ over function
fields proved by L.~Lafforgue \cite[Th\'eor\`eme VII.6]{Lafforgue}. The
improvement of the bound is due to V.~Lafforgue \cite[Corollaire
2.2]{Lafforgue}. The extension from curves to schemes is stated by L.
Lafforgue \cite[Proposition VII.7]{Lafforgue} for part (1), and due to
Deligne \cite[Th\'eor\`eme 3.1]{Deligne} for part (2).

Recently Drinfeld and Kedlaya \cite[Theorem 1.3.3]{DK} proved a refinement
of V.~Lafforgue's bound for Newton polygons, which can be thought of as an
analogue of Griffiths transversality. We also extend this result to stacks
(Theorem \ref{t.slopes}).

The following is our main result on companions.

\begin{theorem}[Companions on smooth stacks]\label{t.1}
Let $X$ be a smooth Artin stack over $\F_q$ of finite presentation and
separated diagonal. Let $\cF$ be a lisse Weil $\Qlb$-sheaf on $X$. Then, for
every embedding $\sigma\colon E(\cF)\to \Qlbp$, $\cF$ admits a lisse
$\sigma$-companion $\cF'$. Moreover, if $E(\cF)$ is a number field, then
there exists a finite extension $E$ of $E(\cF)$ such that for every finite
place $\lambda'$ of $E$ not dividing $q$, $\cF$ admits a lisse
$\sigma_{\lambda'}$-companion. Here $\sigma_{\lambda'}\colon E(\cF)\to E \to
E_{\lambda'}$, and $E_{\lambda'}$ denotes the completion of $E$ at
$\lambda'$.
\end{theorem}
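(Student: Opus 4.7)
The plan is to reduce to Drinfeld's theorem for smooth schemes by smooth descent along an atlas of $X$, using the uniqueness of semisimple companions on a smooth base that is a consequence of Theorem~\ref{t.2}(2) (a smooth stack is geometrically unibranch).

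Choose a smooth surjection $p\colon U\to X$ with $U$ a smooth $\F_q$-scheme of finite type. The separated-diagonal hypothesis ensures that each $U_n := U\times_X\cdots\times_X U$ ($n+1$ factors) is a separated, smooth algebraic space of finite type. As a preliminary, I would extend Drinfeld's theorem from smooth schemes to smooth separated algebraic spaces by \'etale descent. This is feasible because Theorem~\ref{t.2}(2) combined with Chebotarev density implies that a semisimple lisse $\sigma$-companion on a smooth scheme is unique up to (non-canonical) isomorphism, and after rigidifying at a geometric point Schur's lemma controls the remaining automorphism ambiguity.

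Apply the extended theorem to obtain a lisse $\sigma$-companion $\cF'_U$ of $p^*\cF$ on $U$. Since semisimplification preserves local Frobenius traces, we may replace $\cF$ by its semisimplification. The two pullbacks $d_0^*\cF'_U$ and $d_1^*\cF'_U$ to $U_1$ are both $\sigma$-companions of the same sheaf, hence isomorphic by the uniqueness statement above. To produce a descent isomorphism $\alpha\colon d_0^*\cF'_U\simto d_1^*\cF'_U$ satisfying the cocycle condition on $U_2$, reduce to the isotypic case $\cF'_U\cong\cE^{\oplus m}$ with $\cE$ simple; then $\Aut(\cF'_U)\cong\GL_m(D)$ for a finite-dimensional division algebra $D$ over $\Qlbp$. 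Rigidifying $\cF'_U$ at a fixed geometric point of $U$ and transporting this rigidification across the smooth groupoid $U_\bullet$ via the fibrewise-determined isomorphisms pins $\alpha$ down so that the cocycle holds by construction; descent then yields the desired lisse $\sigma$-companion $\cF'$ on $X$. For the moreover clause, one runs the same descent uniformly in $\lambda'$, using the integral refinement of Drinfeld's theorem for schemes (which supplies a single finite extension $E/E(\cF)$ over which $\cF'_U$ can be realized for every place $\lambda'\nmid q$) together with the same cocycle argument.

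\emph{Main obstacle.} The hardest step is arranging the cocycle condition on $U_2$, since a companion is determined only up to a non-canonical automorphism. Establishing a sufficiently rigid form of uniqueness of companions on smooth algebraic spaces, and then propagating a single rigidification across the simplicial diagram $U_\bullet$, is what makes the descent go through, both for a fixed embedding $\sigma$ and uniformly in $\lambda'$ for the moreover part.
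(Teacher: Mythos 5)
Your reduction to Drinfeld via a smooth atlas $p\colon U\to X$ runs into a genuine gap at exactly the point you flag as the ``main obstacle,'' and the rigidification idea does not resolve it. The issue is that a semisimple companion is unique only up to an \emph{isomorphism that is itself non-unique}: for a simple factor, $\Aut$ is $\overline{\Q_{\ell'}}^\times$ by Schur's lemma, so ``rigidifying at a geometric point'' reduces the automorphism ambiguity to nonzero scalars but cannot kill it. Concretely, after your reduction to $X=[Y/G]$ (which your setup would also have to pass through, cf.\ Lemma~\ref{l.Behrend}), take $U=Y\to X$ a finite Galois cover; then $U\times_X U\cong Y\times G$ has $|G|$ components, and a descent datum $\alpha$ amounts to a lift of the obvious \emph{projective} $G$-action on $\cF'_Y$ (provided by uniqueness of companions) to a genuine $G$-equivariant structure. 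The obstruction lives in $H^2(G,\overline{\Q_{\ell'}}^\times)$, and there is no a priori reason it vanishes; in fact proving it vanishes is essentially equivalent to the existence statement one is trying to prove. Fixing a rigidification at a single geometric point of $U$ determines $\alpha$ on at most the connected component of $U_1$ containing the diagonal, not on the other components, so the cocycle is not ``satisfied by construction.'' The same problem already infects your preliminary step of extending Drinfeld from smooth schemes to smooth separated algebraic spaces by \'etale descent.

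The paper's proof sidesteps descent entirely. After reducing (via Corollary~\ref{c.Esub}, Propositions~\ref{p.comp} and~\ref{p.Eopen}, Lemma~\ref{l.Behrend}, and Remark~\ref{r.gerbe}) to the case $X=[Y/G]$ with $G$ finite and $\cF$ simple, it embeds $G\hookrightarrow\GL_m$ and forms the smooth stack $Z=[Y\times\A^{m^2}/G]$, a vector bundle over $X$ whose zero section $s$ retracts onto $X$ and which contains the dense open \emph{scheme} $Y\wedge^G\GL_m=[Y\times\GL_m/G]$. Drinfeld's theorem produces a companion on that scheme, Proposition~\ref{p.comp} (pushforward of a pure companion from a dense open) extends it to a lisse companion $\cG'$ on all of $Z$, and $\cF'=s^*\cG'$ is the sought companion on $X$. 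This route replaces the cocycle problem by the $j_*$-extension property of companions, which is what Propositions~\ref{p.comp} and~\ref{p.Eopen} are designed for. If you want to salvage a descent-flavored argument, you would need to prove the vanishing of the relevant obstruction class, which does not follow from uniqueness of companions or from any form of pointwise rigidification.
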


The statements of Theorem \ref{t.1} are conjectured to hold for normal
schemes separated of finite type over $\F_q$ by Deligne \cite[Conjecture
1.2.10 (v)]{WeilII}. In the case of curves, the first assertion of the
theorem is due to L.~Lafforgue \cite[Th\'eor\`eme VII.6]{Lafforgue}, and the
second to Chin \cite{Chin}. The extension from curves to smooth schemes is
due to Drinfeld \cite[Theorem 1.1]{Dr}.

As an application of Theorem \ref{t.1}, we deduce that Deligne's conjecture
holds for coarse moduli spaces of smooth Artin stacks.

\begin{cor}[Companions on coarse moduli spaces]\label{c.moduli}
Let $X$ be a scheme or algebraic space that is Zariski locally the coarse
moduli space of a smooth Artin stack of finite inertia and finite
presentation over $\F_q$ (e.g.\ when $X$ has quotient singularities). Then
the statements of Theorem \ref{t.1} hold for $X$.
\end{cor}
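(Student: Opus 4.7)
The plan is to construct a $\sigma$-companion of $\cF$ locally on $X$ by pulling $\cF$ back to a covering smooth Artin stack, applying Theorem~\ref{t.1}, and descending, then to glue the local companions to a global one.

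Work on a Zariski open $U\subset X$ realized as the coarse moduli space of a smooth Artin stack $\pi\colon\cX\to U$ of finite inertia and finite presentation, and set $\cG:=\pi^*(\cF|_U)$. Since finite inertia forces the diagonal of $\cX$ to be finite and in particular separated, Theorem~\ref{t.1} applies to $(\cX,\cG)$ and produces a lisse Weil $\sigma$-companion $\cG'$ on $\cX$. Because a lisse sheaf on $\cX$ lies in the essential image of $\pi^*$ exactly when the automorphism group $G_{\bar v}$ of every geometric point $\bar v$ acts trivially on the stalk, the heart of the argument is to show that each $g\in G_{\bar v}$ acts as the identity on $\cG'_{\bar v}$.

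Fix such $\bar v$ and $g$, and choose $n\ge 1$ so that both are defined over $\F_{q^n}$. For every $m\ge 1$ the pair $(\bar v,g)$ defines an $\F_{q^{nm}}$-point of $\cX$ at which the Frobenius trace on any lisse sheaf $\cH$ on $\cX$ equals $\tr(g\cdot\Fr_{q^n}^m,\cH_{\bar v})$. Since $\cG=\pi^*(\cF|_U)$ has trivial stabilizer action on stalks, comparing the companion property at $(\bar v,g)$ and at $(\bar v,e)$ gives, for every $m\ge 1$,
\[
\tr(g\,\Fr_{q^n}^m,\cG'_{\bar v})=\sigma\tr(\Fr_{q^n}^m,\cG_{\bar v})=\tr(\Fr_{q^n}^m,\cG'_{\bar v}),
\]
so $\tr((g-1)\,\Fr_{q^n}^m,\cG'_{\bar v})=0$ for all $m\ge 1$. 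Because $\Fr_{q^n}$ is invertible on the stalk, its characteristic polynomial has nonzero constant term; the Cayley--Hamilton recurrence for the sequence $m\mapsto\tr((g-1)\Fr_{q^n}^m,\cG'_{\bar v})$ then forces its value at $m=0$ to vanish, so $\tr(g,\cG'_{\bar v})=r:=\dim\cG'_{\bar v}$. As $g$ acts on the $\ell$-adic stalk through the finite group $G_{\bar v}^{\mathrm{red}}(\overline{\F_q})$, its eigenvalues are roots of unity summing to $r$, hence all equal to $1$, so $g$ acts as the identity.

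Thus $\cG'$ descends to a lisse Weil sheaf $\cF'_U$ on $U$, and the $\sigma$-companion property for $\cF'_U$ at any $x\in U(\F_{q^n})$ follows by lifting $x$ to an $\F_{q^n}$-point of $\cX$ and invoking the corresponding property of $\cG'$. On overlaps of two coarse-moduli opens the two local companions agree up to isomorphism by uniqueness of companions on a connected normal base, and after choosing and normalizing transition isomorphisms they glue to a global lisse Weil $\sigma$-companion $\cF'$ on $X$. The second assertion of Theorem~\ref{t.1} transfers in exactly the same way, starting from the compatible family produced on $\cX$. The main obstacle is the trace-to-descent step, which crucially exploits twisted $\F_{q^n}$-points $(\bar v,g)$ of $\cX$ carrying nontrivial stabilizer data; the Zariski gluing, while routine in principle, requires attention because companion isomorphisms are not canonical.
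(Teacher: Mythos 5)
Your overall strategy is sound and does reach the result, but the route differs from the paper's and a couple of steps are glossed over.

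For the local descent step, the paper does not show directly that the companion on the covering stack has trivial stabilizer action. Instead (Proposition~\ref{p.create}~(1)) it pushes forward along the proper universal homeomorphism $\pi\colon \cX\to U$: by Lemma~\ref{l.push} (proper base change plus the trace formula / Gabber's theorem on gerbes) $\pi_*\cG'$ is a $\sigma$-companion of $\cF|_U$, and it is lisse because $\pi^*\pi_*\cG'\to\cG'$ is a monomorphism between two companions of $\pi^*(\cF|_U)$, hence an isomorphism by a rank comparison, after which Lemma~\ref{l.lisse} applies. Your twisted-point argument is a genuine alternative: using the companion property at the $\F_{q^{nm}}$-points $(\bar v,g)$ and the Cayley--Hamilton step to reach $m=0$ is a clean way to force each finite-order automorphism to act trivially, and then the descent criterion (which you assert but should justify, via proper base change and $H^0(BG_{\bar v},-)=(-)^{G_{\bar v}}$) gives $\cG'\simeq\pi^*\pi_*\cG'$. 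The trade-off is that the paper's version is packaged so as to establish the more general and reusable statement that proper universal homeomorphisms create lisse companions, whereas yours is tailored to the coarse-moduli situation.

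The Zariski gluing is where your write-up is too thin. There is no real cocycle problem to solve: on a connected normal (hence geometrically unibranch) base the restriction to a dense open is fully faithful on lisse sheaves, so the paper (Proposition~\ref{p.create}~(2)) simply takes a semisimple $\sigma$-companion $\cG'_U$ on $U=\bigcap_i U_i$ and sets $\cF'=j_*\cG'_U$; lisse-ness is checked by identifying $(j_*\cG'_U)|_{U_i}$ with the semisimplification of the known local companion on $U_i$. ``Choosing and normalizing transition isomorphisms'' is not the right frame and does not by itself address the only substantive point, namely that the extension from $U$ to $X$ is again lisse; you should replace that sentence by the $j_*$ argument (or at least invoke the full faithfulness of $j^*$ to dispose of the nonexistent cocycle issue and then cite Proposition~\ref{p.Eopen}/Lemma~\ref{l.lisse}). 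With that repair, your proposal is correct.
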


The general normal case seems difficult. Drinfeld deduces his result from an
equivalence between lisse sheaves on a regular scheme $X$ and compatible
systems of lisse sheaves on curves on $X$ \cite[Theorem 2.5]{Dr}. This
equivalence fails for $X$ normal in general \cite[Section 6]{Dr}.

For both Theorems \ref{t.2} and \ref{t.1}, we reduce first to the case of a
quotient stack $[Y/G]$ of a quasi-projective scheme $Y$ by a finite group
$G$, and then, choosing an embedding $G\to \GL_m$, to the scheme $Y\wedge^G
\GL_m$. One step of the reduction consists of showing that the assertions
can be checked on any dense open substack.

Gabber's theorem \cite{Gabber} that companionship is preserved by operations
on the Grothendieck groups extends to stacks \cite[Proposition 5.8]{Zind}.
Combining this with Theorem \ref{t.1}, one obtains the existence of perverse
companions on (not necessarily smooth) Artin stacks.

The paper is organized as follows. In Section \ref{s.1}, we establish some
preliminary results on Weil sheaves. In Section \ref{s.1t}, we prove
Theorems \ref{t.2} (1) on Frobenius eigenvalues and Theorem \ref{t.slopes}
on Newton polygons. We deduce from Theorem \ref{t.2} (1) that the bounded
derived category $D^b_c(X,\Qlb)$ is a direct sum of twists of the derived
category of weakly motivic complexes for any Artin stack $X$ of finite
presentation over $\F_q$. In Section \ref{s.2}, we prove Theorem \ref{t.2}
(2) on Frobenius traces. In Section \ref{s.3}, we prove Theorem \ref{t.1}
and Corollary \ref{c.moduli} on lisse companions. We deduce results on
perverse companions and companions in Grothendieck groups on Artin stacks of
finite presentation and separated diagonal over $\F_q$. In an appendix
(Section \ref{s.4}), we prove that pure perverse sheaves on $X$ are
geometrically semisimple, without assuming that the stabilizers are affine,
extending a result of Sun \cite[Theorem 3.11]{Sun}.

Unless otherwise stated, all stacks are assumed to be Artin stacks of finite
presentation over $\F_q$, not necessarily of separated diagonal, and sheaves
are assumed to be constructible. We write $D(X,\Qlb)$ for $D_c(X,\Qlb)$. We
will only consider the middle perversity.

\subsection*{Acknowledgments}
This paper grows out of an answer to Shenghao Sun's question of extending
the theorems of Deligne and Drinfeld to stacks. I thank Yongquan Hu, Yifeng
Liu, Martin Olsson, and Shenghao Sun for useful discussions, and Vladimir
Drinfeld and Luc Illusie for valuable comments. I am grateful to Ofer Gabber
for pointing out a mistake in a draft of this paper. I thank the referee for
a careful reading of the manuscript and many helpful comments. Part of this
paper was written during a stay at Shanghai Center for Mathematical Sciences
and I thank the center for hospitality.

\section{Weil sheaves}\label{s.1}
For problems concerning companions, it is convenient to work with Weil
sheaves. In this section, we establish some preliminary results on Weil
sheaves. The main result is Proposition \ref{p.twist} on the determinant of
lisse Weil $\Qlb$-sheaves on geometrically unibranch stacks. We deduce that
the category of Weil $\Qlb$-sheaves is a direct sum of the twists of the
category of $\Qlb$-sheaves (Proposition \ref{p.Weil}).

Let $E_\lambda$ be an algebraic extension of $\Q_\ell$. A \emph{Weil
$E_\lambda$-sheaf} on a stack $X$ is an $E_\lambda$-sheaf $\cF$ on
$X\otimes_{\F_q} \Fqb$ equipped with an action of the Weil group
$W(\Fqb/\F_q)$ lifting the action of $W(\Fqb/\F_q)$ on $X\otimes_{\F_q}
\Fqb$. A morphism of Weil $E_\lambda$-sheaves on $X$ is a morphism of the
underlying $E_\lambda$-sheaves on $X\otimes_{\F_q} \Fqb$ compatible with the
action of $W(\Fqb/\F_q)$.

\begin{remark}\label{r.2lim}
More formally, the category $\Sh^W(X,E_\lambda)$ of Weil $E_\lambda$-sheaves
on $X$ is a (pseudo)limit of the diagram (i.e.\ pseudofunctor) $B\Z\to \Cat$
given by the action of $\Z\simeq W(\Fqb/\F_q)$ on the category
$\Sh(X\otimes_{\F_q}\Fqb,E_\lambda)$, where $B\Z$ is the groupoid associated
to the group $\Z$ and $\Cat$ is the $2$-category of categories. If we let
$\Cat^{B\Z}$ denote the $2$-category of diagrams $B\Z\to \Cat$, the
forgetful $2$-functor $\for\colon \Cat^{B\Z}\to \Cat$ and the limit
$2$-functor $\lim\colon \Cat^{B\Z}\to \Cat$ preserve limits (up to
equivalences).
\end{remark}

\begin{remark}\label{r.ext}
The functor $\Sh(X,E_\lambda)\to \Sh^W(X,E_\lambda)$ carrying $\cF$ to
$(\cF_{\Fqb} ,\phi)$, where $\cF_{\Fqb}$ is the pullback of $\cF$ to
$X\otimes_{\F_q}\Fqb$ and $\phi$ is the restriction of the action of
$\Gal(\Fqb/\F_q)$ to $W(\Fqb/\F_q)$, is fully faithful. Moreover, its
essential image is stable under extension by the following general facts on
extensions (cf.\ \cite[Proposition 5.1.2]{BBD}).
\begin{enumerate}
\item Let $(\cA,F)$ be an Abelian category with $\Z$-action (i.e.\ a
    pseudofunctor from $B\Z$ to the $2$-category of Abelian categories).
    For objects $(A,\phi)$ and $(B,\psi)$ of the limit category $\cA^F$,
    we have a short exact sequence of Abelian groups (cf.\ \cite[page
    124]{BBD})
    \[0\to \Hom_{\cA}(A,B)_\Z\to \Ext^1_{\cA^F}((A,\phi),(B,\psi))\to \Ext^1_\cA(A,B)^\Z\to 0.\]
\item Let $\cD$ be a triangulated category equipped with a $t$-structure.
    Note that $\cD$ is not necessarily equivalent to the derived category
    of its heart $\cA$. Nonetheless, for $A$ and $B$ in $\cA$, we have an
    isomorphism $\Hom_{\cD}(A,B[1])\simeq \Ext^1_{\cA}(A,B)$ carrying $f$
    to the extension given by completing $f$ into a distinguished triangle
    (cf.\ \cite[Remarque 3.1.17 (ii)]{BBD}).
\end{enumerate}
\end{remark}

Recall from \cite[Tag 06U6]{SP} that a morphism $f\colon X\to Y$ of Artin
stacks (not necessarily of finite presentation over $\F_q$) is said to be
\emph{submersive} if the induced continuous map is a quotient map, and
\emph{universally submersive} if for every morphism of Artin stacks $Y'\to
Y$, the base change $X\times_Y Y'\to Y'$ of $f$ is submersive.

\begin{lemma}\label{l.descent}
Let $f\colon X\to Y$ be a universally submersive morphism of stacks. Then
$f$ is of effective descent for Weil $E_\lambda$-sheaves and for
$E_\lambda$-sheaves.
\end{lemma}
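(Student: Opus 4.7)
My plan is to proceed via three successive reductions: from Weil sheaves to ordinary sheaves (via the $2$-limit formalism of Remark~\ref{r.2lim}), from $E_\lambda$-adic sheaves to torsion sheaves (via the level-by-level description), and finally from Artin stacks to schemes (via smooth atlases).

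For the first reduction, Remark~\ref{r.2lim} identifies $\Sh^W(X,E_\lambda)$ with the $2$-limit over $B\Z$ of $\Sh(X\otimes_{\F_q}\Fqb, E_\lambda)$ equipped with its Weil action, and similarly for $Y$, $X\times_Y X$, $X\times_Y X\times_Y X$. Since $2$-limits commute with $2$-limits, a descent datum (resp.\ an effective descent datum) for a Weil sheaf along $f$ is equivalent to a descent datum (resp.\ an effective descent datum) for an ordinary $E_\lambda$-sheaf along $f\otimes_{\F_q}\Fqb$ equipped with a compatible Weil-group action. Since universal submersion is stable under arbitrary base change, $f\otimes_{\F_q}\Fqb$ is again universally submersive, and the Weil claim follows from the ordinary one.

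For the second reduction, a constructible $E_\lambda$-sheaf is a compatible system of constructible $\cO_\lambda/\fm_\lambda^n$-sheaves with $\Hom$ computed as an inverse limit over $n$; descent data and their effectivity are therefore detected level by level, reducing the problem to effective descent for constructible torsion étale sheaves. For the third reduction I would pick smooth atlases $Y_0\to Y$ and $X_0\to X\times_Y Y_0$ by schemes; the morphism $X_0\to Y_0$ remains universally submersive, and smooth descent for constructible étale sheaves on Artin stacks identifies the sheaf categories on $X$ and $Y$ with descent data along these atlases. One is then reduced to the known scheme-level fact that a universally submersive morphism of schemes is of universal effective descent for constructible torsion étale sheaves (such morphisms generate covers in the h-topology, which is subcanonical for this category of sheaves).

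The main obstacle I anticipate is the last step: combining smooth descent on the stack side with universal-submersion descent on the scheme side requires a double-simplicial reshuffling to match the two descent formalisms on iterated fiber products, which, while formal, is the only step going beyond pure diagram chasing. The first two reductions are essentially bookkeeping inside a $2$-categorical framework.
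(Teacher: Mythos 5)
Your overall architecture matches the paper's: the first reduction (Weil to ordinary sheaves via the $2$-limit formalism of Remark~\ref{r.2lim}) is exactly what the paper does, and the passage from stacks to schemes via smooth atlases is also the paper's strategy. But there are two places where you deviate, and in both the paper's route is cleaner.

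First, your insertion of a reduction to torsion coefficients is a step the paper does not take, and it is not as innocuous as you suggest. The category $\Sh(X,E_\lambda)$ is built from $(\cO_\lambda/\fm^n)$-sheaves by a pro-category construction followed by inverting $\ell$ and taking a $2$-colimit over finite subextensions; effective descent does not pass through these operations for free, and the interaction of the localization $(-)\otimes_{\cO_\lambda}E_\lambda$ with descent data needs a genuine argument. The paper avoids this entirely by running Beck's monadicity theorem directly on the $E_\lambda$-linear categories: $r^*$ is exact and conservative, and the Beck--Chevalley condition holds by smooth (resp.\ proper) base change, which are available at the $E_\lambda$-level.

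Second, the ``double-simplicial reshuffling'' you worry about is precisely what Giraud's square criterion~\cite[Th\'eor\`eme~10.4, line 6 of the table]{Giraud} is designed to eliminate. The paper forms the square
\[
\xymatrix{X'\ar[r]^u\ar[d]_{g} & Y'\ar[d]^{f}\\ X\ar[r]^{r} & Y,}
\]
with $Y'\to Y$, $X'\to X\times_Y Y'$ smooth presentations by schemes, and reduces effective descent for $r$ to effective descent for $u$, $f$, $g$, and $g'\colon X'\times_{Y'}X'\to X\times_Y X$ in one shot; no simplicial bookkeeping is needed. For $u$, a universally submersive morphism of schemes, the paper then cites Voevodsky's theorem that $u$ is dominated by a proper surjection followed by a Zariski cover, and applies Beck's theorem with proper base change and smooth base change to the remaining cases. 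Your appeal to the h-topology is in the right spirit (Voevodsky's $h$-covers \emph{are} the universally submersive morphisms, and his structure theorem is what's behind both arguments), but ``subcanonical'' is the wrong notion: subcanonicality is about representable presheaves of sets being sheaves, whereas what you need is that $X\mapsto\Sh(X,\Lambda)$ is a stack for the $h$-topology — a genuinely stronger $2$-categorical statement. That fact is correct, but it is not a free black box; it amounts to the very combination of proper, Zariski, and smooth descent that the paper proves via Voevodsky plus Beck. So your plan would work, but the final step is doing more work than you credit it, and the paper packages it more economically.
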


The statement of the lemma means that $f^*$ induces an equivalence of
categories from $\Sh^W(Y,E_\lambda)$ to the category of descent data, which
is a limit of the diagram
\[\xymatrix{\Sh^W(X,E_\lambda)\ar@<0.5ex>[r]\ar@<-0.5ex>[r]& \Sh^W(X\times_Y X,E_\lambda)\ar@<1ex>[r]\ar@<-1ex>[r]\ar[r]& \Sh^W(X\times_Y X\times_Y X,E_\lambda).}\]
induced by inverse image functors. An object of the category of descent data
is a Weil $E_\lambda$-sheaf $\cF$ on $X$ endowed with an isomorphism
$p_1^*\cF\simeq p_2^*\cF$ satisfying the cocycle condition. Here
$p_1,p_2\colon X\times_Y X\to X$ are the two projections. Compare with
\cite[Proposition 2.4]{IZ}.

\begin{proof}
Consider the pseudofunctor $F\colon \Stk\to \Cat^{B\Z}$ carrying $X$ to the
diagram in Remark \ref{r.2lim}, where $\Stk$ is the $2$-category of stacks.
The assertion of the lemma for Weil $E_\lambda$-sheaves is that $f$ is of
effective descent for the pseudofunctor $\lim \circ F$. Since $\lim$ and
$\for$ preserve limits, we may replace $\lim\circ F$ by $F$, and then by
$\for\circ F$. In other words, it suffices to show that $f\otimes_{\F_q}
\overline{\F_q}$ is of effective descent for $E_\lambda$-sheaves.

Changing notation, let us show that any universally submersive morphism
$r\colon X\to Y$ of Artin stacks of finite presentation over $\F_q$ or
$\overline{\F_q}$ is of effective descent for $E_\lambda$-sheaves. Let
$f\colon Y'\to Y$ be a smooth presentation with $Y'$ a scheme and let $X'\to
X\times_Y Y'$ be a smooth presentation with $X'$ a scheme. Consider the
square
\[\xymatrix{X'\ar[r]^u\ar[d]_{g} & Y'\ar[d]^{f}\\
X\ar[r]^{r} & Y.}
\]
By a general property of descent \cite[Th\'eor\`eme 10.4, line 6 of the
table]{Giraud}, it suffices to show that $u$, $f$, $g$, and $g'\colon
X'\times_{Y'} X'\to X\times_Y X$ are of effective descent. Note that $f$,
$g$, and $g'$ are representable and smooth surjective, and $u$ is a
universally submersive morphism of schemes. By a theorem of Voevodsky
\cite[Theorem 3.1.9]{Voe}, $u$ is dominated by $\coprod V_i \to V
\xrightarrow{v} Y'$, where $v$ is a proper surjective morphism of schemes
and $(V_i)$ is a finite Zariski open cover of $V$. By general properties of
descent \cite[Proposition 6.25 (ii), (iii)]{Giraud}, we are reduced to
showing that $r$ is of effective descent in the following cases: (a) $r$ is
representable and smooth surjective; (b) $r$ is a proper surjective morphism
of schemes. This follows from Beck's theorem \cite[Proposition 5]{BR} (cf.\
\cite[VIII 9.4.1]{SGA4}). Indeed, $r^*$ is exact and conservative, and the
Beck-Chevalley condition is verified by smooth base change in case (a) and
proper base change in case (b).
\end{proof}

A Weil $E_\lambda$-sheaf $\cF$ on a stack $X$ is called \emph{lisse} if
there exists a smooth presentation $f\colon Y\to X$ such that the pullback
of $\cF$ to $Y\otimes_{\F_q} \overline{\F_q}$ is isomorphic to
$\cG\otimes_\cO E_\lambda$ for a lisse $\cO$-sheaf $\cG$, where $\cO$ is the
ring of integers of a finite extension of $\Q_\ell$ in $E_\lambda$. Lemma
\ref{l.descent} also holds for lisse Weil $E_\lambda$-sheaves (and lisse
$E_\lambda$-sheaves). This follows from the lemma and the following fact.

\begin{lemma}\label{l.lisse}
Let $f\colon X\to Y$ be a universally submersive morphism of stacks and let
$\cF$ be a Weil $E_\lambda$-sheaf on $Y$. Then $\cF$ is lisse if and only if
$f^*\cF$ is lisse.
\end{lemma}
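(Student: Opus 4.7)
The plan is to handle the ``only if'' direction by a direct pullback argument, and to reduce the substantive ``if'' direction to a descent statement about constructible sheaves on schemes, where I can invoke Voevodsky's theorem as in the proof of Lemma \ref{l.descent}.

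For the ``only if'' direction, given a smooth presentation $g\colon Y_0\to Y$ witnessing lisseness of $\cF$, I would form the $2$-pullback $X_0 := X\times_Y Y_0$ and choose a smooth presentation $X_1\to X_0$ with $X_1$ a scheme. Then the composition $X_1\to X$ is a smooth presentation, and the pullback of $f^*\cF$ to $X_1\otimes_{\F_q}\Fqb$ agrees with the pullback of the given lisse $\cO$-sheaf from $Y_0\otimes_{\F_q}\Fqb$, hence is lisse.

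For the ``if'' direction, suppose $f^*\cF$ is lisse. I would first reduce to the case where both $X$ and $Y$ are schemes, by choosing smooth presentations $Y_0\to Y$ and $X_1\to X$ with $Y_0$, $X_1$ schemes, the latter chosen so that the pullback of $f^*\cF$ to $X_1\otimes_{\F_q}\Fqb$ has the form $\cG\otimes_\cO E_\lambda$ for a lisse $\cO$-sheaf $\cG$, and then picking a further smooth presentation $X_2\to X_1\times_X (X\times_Y Y_0)$ with $X_2$ a scheme. The resulting morphism $X_2\to Y_0$ is universally submersive (as a composition of smooth surjective morphisms with the base change of $f$), and the pullback of $\cF$ to $X_2\otimes_{\F_q}\Fqb$ is lisse. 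Unwinding the definition of a lisse $\cO$-sheaf as a compatible system of locally constant $\cO/\ell^n$-sheaves, it suffices to prove the following scheme-theoretic statement: if $h\colon X'\to Y'$ is a universally submersive morphism of schemes over $\Fqb$ and $\cH$ is a constructible sheaf of modules over a finite ring on $Y'$ such that $h^*\cH$ is locally constant, then $\cH$ is locally constant.

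To prove this scheme-theoretic statement, I would apply Voevodsky's theorem \cite[Theorem 3.1.9]{Voe} as in the proof of Lemma \ref{l.descent} to dominate $h$ by a composition $\coprod V_i\to V\to Y'$ with $V\to Y'$ proper surjective and $\{V_i\}$ a finite Zariski open cover of $V$. Using the descent formalism already invoked there, it suffices to handle the cases in which $h$ is a smooth surjection, a Zariski open cover, or a proper surjection; the first two are standard descent of local constancy. The main obstacle is the proper surjective case, where I would rely on the characterization that a constructible sheaf on a Noetherian scheme is locally constant if and only if every geometric specialization map $\cH_{\bar y'}\to\cH_{\bar y}$ is an isomorphism. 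Given such a specialization in $Y'$, properness and surjectivity of $h$ let me lift it to a specialization $\bar x'\rightsquigarrow\bar x$ in $X'$: choosing $\bar x'$ over $\bar y'$, the set $h(\overline{\{\bar x'\}})$ is closed and contains $\bar y'$, hence contains $\bar y$. Then $\cH_{\bar y'}\to\cH_{\bar y}$ identifies with $(h^*\cH)_{\bar x'}\to(h^*\cH)_{\bar x}$, which is an isomorphism by the hypothesis that $h^*\cH$ is locally constant.
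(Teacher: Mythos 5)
Your proof is correct, but it takes a genuinely different route from the paper. The paper's proof of the ``if'' direction reduces to schemes and $\cO$-sheaves exactly as you do, but then invokes Rydh's Theorem 5.17 as a black box: universally submersive morphisms of schemes are of \emph{effective descent} for \'etale morphisms, from which descent of local constancy follows by representing locally constant constructible sheaves by finite \'etale spaces. You instead replay the structure of the proof of Lemma~\ref{l.descent}: apply Voevodsky's theorem to refine the universally submersive morphism by a Zariski open cover composed with a proper surjection, and then prove descent of local constancy directly in each elementary case, with the proper-surjective case handled by the specialization criterion (lift a specialization through a proper surjection because images of closed sets are closed, then use compatibility of specialization maps with pullback). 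Your approach is more self-contained and avoids the heavier effective-descent machinery, at the cost of invoking the criterion that a constructible sheaf on a Noetherian scheme is locally constant iff all (co)specialization maps are isomorphisms; the paper's approach is shorter but delegates the real work to Rydh. Two small remarks: first, for descent of a \emph{property} of sheaves (as opposed to effective descent of a fibered category) one does not actually need Giraud's formalism invoked in Lemma~\ref{l.descent} — stability of the property under pullback and descent along each factor of a composition suffice, and this is elementary; second, you have the specialization map pointing the wrong way (if $\bar y$ is a specialization of $\bar y'$, the map goes $\cH_{\bar y}\to\cH_{\bar y'}$), but this does not affect the substance of the argument.
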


\begin{proof}
The ``only if'' part is trivial. To show the ``if'' part, by taking smooth
presentations, we are reduced to the case of schemes (over $\Fqb$) and
$\cO$-sheaves. In this case, the assertion follows from the fact that $f$ is
of effective descent for \'etale morphisms \cite[Theorem 5.17]{Rydh}.
\end{proof}

\begin{remark}\label{r.unib}
We say that a stack $X$ is \emph{geometrically unibranch} if for some (or,
equivalently, for every) smooth presentation $Y\to X$, the strict
localizations of $Y$ are irreducible. Normal stacks are geometrically
unibranch. If $X$ is geometrically unibranch, then every lisse Weil
$E_\lambda$-sheaf $\cF$ satisfies $j_*j^*\cF\simeq \cF\otimes j_*
E_\lambda\simeq \cF$ for every dominant open immersion $j\colon U\to X$. It
follows that the pullback of $\cF$ to $X\otimes_{\F_q} \overline{\F_q}$
comes from a lisse $\cO$-sheaf.

Let $X$ be a connected stack and let $\bar x\to X$ be a geometric point. The
fundamental group $\pi_1(X,\bar x)$ is defined in \cite[Section 4]{Noohi},
which extends to stacks not necessarily of separated diagonal, as the group
of automorphisms of the functor $\FEt(X)\to \Fin$ carrying $Y\to X$ to (the
underlying set of) the geometric fiber $Y\times_X \bar x$. Here $\FEt(X)$
denotes the category of finite\footnote{Recall that a morphism of stacks is
said to be \emph{finite} if it is representable by schemes and finite.}
\'etale morphisms over $X$, and $\Fin$ denotes the category of finite sets.
We define the Weil group $W(X,\bar x)$ to be the inverse image of
$W(\overline{\F_q}/\F_q)$ under the homomorphism $\pi_1(X,\bar x)\to
\Gal(\overline{\F_q}/\F_q)$. The functor $\Sh^W_\lisse(X,E_\lambda)\to
\Vect(E_\lambda)$ carrying $\cF$ to its stalk $\cF_{\bar x}$ at $\bar x$ is
conservative.

If $X$ is connected and geometrically unibranch, then $X$ is irreducible and
Weil $E_\lambda$-sheaves (resp.\ $E_\lambda$-sheaves) on $X$ correspond to
$E_\lambda$-representations of the Weil (resp.\ fundamental) group of $X$.
\end{remark}

\begin{remark}\label{r.Qlb}
Let $X$ be a stack. For Weil $E_\lambda$-sheaves $\cF$ and $\cG$ on $X$, we
have $\cF\simeq \cG$ if and only if $\cF\otimes_{E_\lambda}\Qlb\simeq
\cG\otimes_{E_\lambda}\Qlb$. Similarly, for $A$ and $B$ in the bounded
derived category $D^b(X,E_\lambda)$ of $E_\lambda$-sheaves, $A\simeq B$ if
and only if $A\otimes_{E_\lambda}\Qlb\simeq B\otimes_{E_\lambda}\Qlb$. This
follows from Lemma \ref{l.Zariski} below and the fact that rational points
form a Zariski dense subset of any affine space over an infinite field (here
$E_\lambda$).

Moreover, a Weil $E_\lambda$-sheaf $\cF$ on $X$ is an $E_\lambda$-sheaf if
and only if the Weil $\Qlb$-sheaf $\cF\otimes_{E_\lambda}\Qlb$ is a
$\Qlb$-sheaf. Indeed, we reduce to the case of schemes by Lemma
\ref{l.descent} and then to lisse sheaves on irreducible geometrically
unibranch schemes by Remark \ref{r.ext}. In this case, the assertion is
clear, as the Weil group is dense in the fundamental group.

For these reasons, we will work mostly with Weil $\Qlb$-sheaves rather than
Weil $E_\lambda$-sheaves.
\end{remark}

The following is a variant of \cite[Lemma 2.1.3]{SZ}.

\begin{lemma}\label{l.Zariski}
Let $X$ be a stack and let $A$ and $B$ be Weil $E_\lambda$-sheaves on $X$
(resp.\ $A,B\in D^b(X,E_\lambda)$). Then there exists a Zariski open
subscheme $U=\bIsom(A,B)$ of the affine space $\bHom(A,B)$ over $E_\lambda$
represented by the $E_\lambda$-vector space $\Hom(A,B)$ such that for any
algebraic extension $E'_\lambda$ of $E_\lambda$, the set $U(E'_\lambda)$ is
the set of isomorphisms $A\otimes_{E_\lambda} E'_\lambda\simto
B\otimes_{E_\lambda} E'_\lambda$.
\end{lemma}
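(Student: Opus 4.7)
First, $\Hom(A,B)$ is a finite-dimensional $E_\lambda$-vector space by standard constructibility and finiteness, so the affine space $\bHom(A,B)$ over $E_\lambda$ is well defined. The idea is to cut out $U$ from $\bHom(A,B)$ by finitely many determinant-nonvanishing conditions, one per stratum of a common stratification (and one per cohomology degree in the derived-category case).

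By Lemma~\ref{l.descent} applied to a smooth presentation $p\colon Y\to X$ with $Y$ a scheme, a morphism $f$ is an isomorphism on $X$ iff $p^*f$ is an isomorphism on $Y$, and $\bHom_X(A,B)$ sits inside $\bHom_Y(p^*A,p^*B)$ as a closed linear subscheme cut out by the cocycle conditions on $Y\times_X Y$. Hence openness of the iso locus on $Y$ implies openness on $X$, and we may assume $X$ is a scheme. Choose a finite stratification $X=\bigsqcup_i X_i$ by connected locally closed subschemes on which every nonzero cohomology sheaf of $A$ and of $B$ (finitely many degrees, since $A,B\in D^b$) becomes lisse, and pick a geometric point $\bar x_i\in X_i$ for each $i$. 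For each pair $(k,i)$ form the linear stalk-cohomology map
\[e_{k,i}\colon \Hom(A,B)\longrightarrow \Hom_{E_\lambda}\!\bigl(\rH^k(A)_{\bar x_i},\,\rH^k(B)_{\bar x_i}\bigr),\qquad f\longmapsto \rH^k(f)_{\bar x_i},\]
where $\rH^k$ denotes the $k$-th ordinary cohomology sheaf (for the Weil-sheaf version only $k=0$ is relevant and $\rH^0$ is the sheaf itself). Inside each finite-dimensional target, the set of invertible linear maps is Zariski open: empty when the two dimensions differ, otherwise the principal open $\{\det\ne 0\}$. Define $U$ to be the intersection over the finitely many pairs $(k,i)$ of the $e_{k,i}$-preimages of these open loci; this is a Zariski open subscheme of $\bHom(A,B)$.

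To check that $U(E'_\lambda)$ equals the set of isomorphisms $A\otimes E'_\lambda\simto B\otimes E'_\lambda$ for every algebraic extension $E'_\lambda/E_\lambda$, one direction is immediate. For the converse, if every $e_{k,i}(f)$ is invertible, then $\rH^k(f)\res_{X_i}$ is a morphism of lisse sheaves on the connected base $X_i$ that is an isomorphism at one geometric fiber, hence at every fiber (a linear iso between single stalks of lisse sheaves automatically commutes with monodromy, so extends to a global iso on the connected $X_i$). Ranging over $i$ makes $\rH^k(f)$ a global iso of constructible sheaves, and the long exact sequence of the cone of $f$ then forces $f$ itself to be an isomorphism in $D^b$. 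The main delicate point is this single-fiber detection step; the additional Weil-group action on stalks causes no trouble since the $E_\lambda$-linear inverse of an equivariant iso is automatically equivariant.
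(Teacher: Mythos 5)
Your proposal is correct and takes essentially the same approach as the paper: the paper's proof is "the same as [SZ, Lemma 2.1.3], by taking a finite number of stalk functors," which is exactly your strategy of choosing one geometric point per stratum of a lissification stratification and cutting out $U$ as the intersection of the determinant-nonvanishing loci for the induced linear maps on stalk cohomologies. One small remark on wording: the parenthetical "a linear iso between single stalks of lisse sheaves automatically commutes with monodromy, so extends to a global iso" is misleading --- an arbitrary linear iso of stalks need not be equivariant; what you actually use is that $\rH^k(f)|_{X_i}$ is already a morphism of lisse sheaves (equivalently, a map of $\pi_1(X_i)$-representations), and such a map whose underlying linear map at one geometric point is invertible is automatically an isomorphism of representations, hence of sheaves.
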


Note that $\Hom(A,B)$ is finite-dimensional. Indeed, we have
\[\Hom(A,B)=\Hom(A_{\Fqb},B_{\Fqb})^{W(\Fqb/\F_q)}\]
in the case of Weil sheaves, and a
short exact sequence
\[0\to \Hom(A_{\Fqb},B_{\Fqb}[-1])_{\Gal(\Fqb/\F_q)}\to \Hom(A,B)\to \Hom(A_{\Fqb},B_{\Fqb})^{\Gal(\Fqb/\F_q)}\to 0\]
in the case of $D^b(X,E_\lambda)$. Here $A_{\Fqb}$ and $B_{\Fqb}$ denote the
pullbacks of $A$ and $B$ to $X\otimes_{\F_q} \Fqb$. The proof of the lemma
is the same as \cite[Lemma 2.1.3]{SZ}, by taking a finite number of stalk
functors.

Following \cite[1.2.7]{WeilII}, for $a\in \Qlb^\times$, we let $\Qlb^{(a)}$
denote the Weil $\Qlb$-sheaf on $\Spec(\F_q)$ of rank one such that the
geometric Frobenius $\Fr_q\in \Gal(\Fqb/\F_q)$ acts by multiplication by
$a$. For a stack $X$, we still denote $\pi_X^*\Qlb^{(a)}$ by $\Qlb^{(a)}$,
where $\pi_X\colon X\to \Spec(\F_q)$ is the projection. We put
$\cF^{(a)}\colonequals \cF\otimes\Qlb^{(a)}$.

The following is an extension to stacks of Deligne's result on determinants
\cite[Propositions 1.3.4 (i), 1.3.14]{WeilII} (cf.\ \cite[0.4]{Deligne}).

\begin{prop}\label{p.twist}
Let $X$ be an irreducible geometrically unibranch stack. Then, for every
lisse Weil $\Qlb$-sheaf $\cF$ on $X$, there exists $a\in \Qlb^\times$ such
that $\det(\cF^{(a)})$ has finite order. Moreover, every \emph{simple} lisse
Weil $\Qlb$-sheaf $\cF$ on $X$ such that $\det(\cF)$ has finite order is a
$\Qlb$-sheaf.
\end{prop}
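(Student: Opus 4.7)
The strategy is to reduce both assertions to the known scheme case \cite[Propositions 1.3.4 (i), 1.3.14]{WeilII} via smooth descent. First I would pick a smooth presentation $Y_0\to X$, take an irreducible component $Y$ of $Y_0$ dominating $X$ (possible since $X$ is irreducible), and pass to the dense open image $U$ of $Y$ in $X$. By Remark \ref{r.unib}, the equivalence $\cF\simeq j_*j^*\cF$ for a dense open immersion $j\colon U\hookrightarrow X$ identifies lisse Weil $\Qlb$-sheaves on $X$ with those on $U$, and both finite order for rank one sheaves and the property of being a $\Qlb$-sheaf are preserved. The resulting morphism $f\colon Y\to U$ is smooth surjective with $Y$ an irreducible geometrically unibranch scheme.

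The key observation used throughout is that for a smooth surjective morphism of connected stacks, the induced maps $\pi_1(Y,\bar y)\to\pi_1(X,\bar x)$ and $W(Y,\bar y)\to W(X,\bar x)$ are surjective. Consequently $f^*$ on lisse Weil $\Qlb$-sheaves is fully faithful, preserves simplicity, and is injective on isomorphism classes of rank one Weil sheaves (equivalently, on continuous characters of the Weil group). For assertion~(1), Deligne's theorem applied to $f^*\cF$ on the scheme $Y$ yields $a\in\Qlb^\times$ with $\det((f^*\cF)^{(a)})=f^*\det(\cF^{(a)})$ of finite order on $Y$; injectivity of $f^*$ on characters then forces $\det(\cF^{(a)})$ to have finite order on $X$.

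For assertion~(2), preservation of simplicity shows $f^*\cF$ is a simple lisse Weil $\Qlb$-sheaf on $Y$ with finite-order determinant $f^*\det(\cF)$, whence by \cite[Proposition 1.3.14]{WeilII} it is a $\Qlb$-sheaf on $Y$. I would descend via Lemma \ref{l.descent}: the Weil-sheaf descent isomorphism $p_1^*f^*\cF\simto p_2^*f^*\cF$ on $Y\times_U Y$ encoding the structure of $\cF$ is, by full faithfulness of $\Sh\hookrightarrow\Sh^W$ (Remark \ref{r.ext}) applied to the $\Qlb$-sheaf pullbacks of $f^*\cF$, automatically a morphism of $\Qlb$-sheaves, and similarly for the cocycle condition on $Y\times_U Y\times_U Y$. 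Lemma \ref{l.descent} for $\Qlb$-sheaves then produces a $\Qlb$-sheaf on $X$ whose underlying Weil sheaf agrees with $\cF$ by Weil descent. I expect the main obstacle to be precisely this descent refinement from Weil sheaves to $\Qlb$-sheaves, for which Remark \ref{r.ext} is the essential ingredient; everything else is formal given the scheme case and the surjectivity of $W(Y)\to W(X)$.
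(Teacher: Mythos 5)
Your overall strategy (reduce to the scheme case via a smooth presentation, use $\pi_1$-surjectivity to transfer finite order of determinants and simplicity, and use the descent machinery to descend the $\Qlb$-structure) is the ``other approach'' that the paper itself mentions in the remark after its proof of Proposition~\ref{p.twist}. However, there is a genuine gap in your ``key observation'': it is \emph{false} that a smooth surjective morphism of connected stacks induces a surjection on fundamental groups (or Weil groups). Any connected nontrivial finite \'etale cover $Y\to X$ is smooth and surjective, yet $\pi_1(Y)\to\pi_1(X)$ has image a proper open subgroup. More to the point, the irreducible component $Y$ of a smooth presentation $Y_0\to X$ that you construct has no reason to have \emph{geometrically connected fibers} over $U$, and that is exactly what Lemma~\ref{l.subm} requires. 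With disconnected fibers, $f^*$ need not be fully faithful, need not preserve simplicity, and need not be injective on rank-one sheaves modulo torsion, so both of your deductions break.

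The fix is exactly what the paper's remark prescribes: after shrinking $X$ (permitted by $\cF\simeq j_*j^*\cF$ on a geometrically unibranch stack, and, for the second assertion, by Lemma~\ref{l.sep}), invoke the generic existence of a smooth presentation with geometrically connected fibers \cite[Th\'eor\`eme~6.5]{LMB}, and then apply Lemma~\ref{l.subm} to get the $\pi_1$-surjectivity you need. With that in place, the rest of your argument goes through, and in particular your descent argument for assertion~(2) -- transport the descent isomorphism from Weil sheaves to $\Qlb$-sheaves via full faithfulness of $\Sh\hookrightarrow\Sh^W$ (Remark~\ref{r.ext}) and conclude by Lemma~\ref{l.descent} -- is a correct and somewhat cleaner alternative to the paper's route. (The paper instead applies Lemma~\ref{l.Behrend} to reduce to quotient stacks $[Y/G]$ with $G$ finite, then uses the finite \'etale cover $Y\to[Y/G]$ together with Lemma~\ref{l.finite}; there the needed control comes from $\pi_1(Y)$ being a finite-index normal subgroup rather than from $\pi_1$-surjectivity. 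Both approaches work for this proposition; the paper's approach is chosen because it also handles Theorems~\ref{t.2}~(2) and~\ref{t.1}, where yours does not.)
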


Note that $a$ is unique up to multiplication by roots of unity. It follows
from the proposition that every simple lisse Weil $\Qlb$-sheaf is the twist
of some $\Qlb$-sheaf.

Even if we restrict our attention to $\Qlb$-sheaves, the first part of the
proposition is still necessary for the following sections. Our proof of the
proposition relies on Lemma \ref{l.Behrend} below, which will be used in
later sections as well.

\begin{lemma}\label{l.subm}
Let $f\colon X\to Y$ be a universally submersive morphism of stacks with
geometrically connected fibers. Assume $Y$ connected. Then $X$ is connected
and the homomorphism $\pi_1(X)\to \pi_1(Y)$ induced by $f$ is surjective.
\end{lemma}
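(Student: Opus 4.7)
The plan is to prove the two claims in order: first that $X$ is connected, and then, by base-changing along connected finite \'etale covers of $Y$, to deduce surjectivity of $\pi_1(X,\bar x)\to\pi_1(Y,\bar y)$. The key observation is that both hypotheses on $f$---universal submersion and geometric connectedness of fibers---are preserved under such base change, which is what makes this two-step reduction work.

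For the connectedness of $X$, I would argue by contradiction: suppose $|X|=U\sqcup V$ with $U,V$ disjoint opens. For each $y\in |Y|$, the fiber $|X_y|$ (identified with the set-theoretic preimage $f^{-1}(y)\subseteq|X|$) is the continuous image of the connected space $|X_{\bar y}|$, hence is connected, hence lies entirely in $U$ or in $V$. Therefore $U$ and $V$ are saturated for $f$. Since $f$ is submersive, the induced map $|X|\to|Y|$ is a surjective quotient map, so $f(U)$ and $f(V)$ are open; they are disjoint (by saturation) and cover $|Y|$ (by surjectivity). The connectedness of $Y$ then forces one of $f(U),f(V)$, and hence one of $U,V$, to be empty.

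For the surjectivity, I would invoke the standard criterion that $\pi_1(X,\bar x)\to\pi_1(Y,\bar y)$ is surjective if and only if, for every connected $Y'\in\FEt(Y)$, the base change $X\times_Y Y'$ is connected. This is a formal consequence of the fiber-functor description of $\pi_1$ recalled in Remark \ref{r.unib}, under which connected finite \'etale objects correspond to transitive $\pi_1$-sets. For any such $Y'$, the morphism $X\times_Y Y'\to Y'$ is the base change of $f$ and therefore remains universally submersive; its geometric fibers coincide with the corresponding geometric fibers of $f$, so they remain geometrically connected; and $Y'$ is connected by assumption. Applying the first part to this morphism yields the desired connectedness of $X\times_Y Y'$.

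I do not anticipate a serious obstacle. The one point that requires a little care is the topological verification in the first step---the identification of $f^{-1}(y)\subseteq|X|$ with $|X_y|$ and the surjectivity of $|X_{\bar y}|\to|X_y|$---which is standard for Artin stacks. Everything else is formal from the definitions, the base-change stability of universal submersion, and the Galois-theoretic description of $\pi_1$.
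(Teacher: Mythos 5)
Your proof is correct and follows essentially the same two-step route as the paper: the connectedness of $X$ is the standard topological fact about quotient maps with connected fibers (which you spell out, while the paper merely cites it), and the surjectivity of $\pi_1(X)\to\pi_1(Y)$ is deduced from the first part applied to the base change $X\times_Y Y'\to Y'$ for each connected finite \'etale $Y'\to Y$.
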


The case of schemes is \cite[IX Corollaire 5.6]{SGA1}.

\begin{proof}
The first assertion follows from the fact that for any quotient map $X\to Y$
of topological spaces, if $Y$ and the fibers are connected, then $X$ is
connected. The second assertion follows from the first one. Indeed, for any
connected finite \'etale cover $Y'\to Y$, $Y'\times_Y X$ is connected by the
first assertion applied to the projection $Y'\times_Y X\to Y'$.
\end{proof}

\begin{lemma}\label{l.sep}
Any stack $X$ admits a dense open substack with separated diagonal.
\end{lemma}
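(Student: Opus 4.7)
The plan is to find a dense open substack $W \subseteq X$ over which the identity section of the inertia stack becomes a closed immersion. Recall that $\Delta_X\colon X \to X \times X$ is separated if and only if its own diagonal $e\colon X \to I_X$ is a closed immersion, where $I_X \colonequals X \times_{X \times X} X$ is the inertia stack, with structural projection $\pi\colon I_X \to X$. Since $\Delta_X$ is representable and of finite presentation, $I_X \to X$ is a group algebraic space of finite presentation over $X$, and $e$ is automatically a locally closed immersion: after pullback along a smooth atlas this becomes the identity section of a group algebraic space of finite presentation, which is always a locally closed immersion.

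Let $J \subseteq I_X$ be the scheme-theoretic closure of $e(X)$. Then $e\colon X \to J$ is an open immersion, and $Z \colonequals J \setminus e(X)$ is a closed substack of $I_X$. Over any open substack $W \subseteq X$ disjoint from $\pi(Z)$, we have $J \times_X W = e(W)$, so $e|_W$ is a closed immersion; it therefore suffices to find a dense open $W$ with $W \cap \pi(Z) = \emptyset$.

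The image $\pi(Z) \subseteq |X|$ is constructible by a Chevalley-type theorem for finite presentation morphisms between Noetherian algebraic stacks. I claim it contains no maximal point of $|X|$. For such a point $\eta$, choose a representative $\Spec K \to X$ factoring through the generic point of a smooth atlas; this map is flat. The fiber $I_{X,\eta} \colonequals I_X \times_X \Spec K$ is then a group algebraic space of finite presentation over the field $K$; by a theorem of Anantharaman (SGA 3, Exp.\ VI$_B$), every such object is a group scheme, hence separated, so the identity section $\Spec K \to I_{X,\eta}$ is a closed immersion. Flatness of $\Spec K \to X$ ensures that scheme-theoretic closure commutes with the base change, giving $J \times_X \Spec K = \overline{\{e(\eta)\}} = \{e(\eta)\}$, whence $Z \times_X \Spec K = \emptyset$ and $\eta \notin \pi(Z)$.

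Therefore $\pi(Z)$ is a constructible subset of $|X|$ missing every maximal point, so it is nowhere dense, and $W \colonequals X \setminus \overline{\pi(Z)}$ is the desired dense open substack. The main obstacle I expect is the foundational bookkeeping: verifying that $e$ is a locally closed immersion, that Chevalley constructibility of the image holds in this stacky setting, and that scheme-theoretic closure really does commute with the chosen base change to $\Spec K$. The essential geometric input that makes the generic spreading argument work is Anantharaman's theorem, which promotes group algebraic spaces of finite presentation over a field to group schemes and thereby supplies separatedness of the inertia at every maximal point.
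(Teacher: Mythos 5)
Your strategy — shrink to the complement of the closure of the bad locus for the identity section $e\colon X\to I_X$, and rule out the bad locus at maximal points via the theorem that group algebraic spaces of finite type over a field are (separated) group schemes — is genuinely different from the paper's. The paper first shrinks to the locus where the inertia is flat, then uses that flat inertia forces $X$ to be a gerbe over an algebraic space, obtains separated inertia on a dense open, and finishes by fppf descent of separatedness of the diagonal. Both proofs ultimately rest on the same key input (group algebraic spaces of finite presentation over a field are separated group schemes; in the Stacks Project this is Tags 08BH, 0B8G, going back to Artin rather than Anantharaman, whose theorem concerns Dedekind bases), and both are essentially spreading-out arguments, so the approaches are comparable in spirit; yours avoids the gerbe detour at the cost of arguing directly on the inertia.

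The gap is in the very first reduction: you assert that $e\colon X\to I_X$ is a locally closed immersion, on the grounds that ``the identity section of a group algebraic space of finite presentation is always a locally closed immersion.'' This is precisely the statement that $I_X\to X$ is \emph{locally} separated, equivalently that the second diagonal $\Delta_{\Delta_X}$ is an immersion, and it is not part of the standing hypotheses: the paper only assumes finite presentation over $\F_q$ (so the second diagonal is a quasi-compact, separated, locally quasi-finite monomorphism of finite type, but a finite-type monomorphism need not be an immersion). For a group algebraic space $G\to S$, the identity section is an immersion if and only if $\Delta_{G/S}$ is, and I do not know a proof that this holds in general for $G$ merely of finite presentation over a Noetherian base. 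Without it, $e(X)$ need not be open in its scheme-theoretic closure $J$, so $Z=J\smallsetminus e(X)$ is only constructible rather than closed, and — more seriously — even after you have shrunk to $W$ with $|J\times_X W|=|e(W)|$, the conclusion that $e|_W\colon W\to J\times_X W$ is an isomorphism does not follow: you would need $J\times_X W$ to be separated over $W$ to run the standard equalizer argument with the retraction $\pi$, and that separatedness is exactly what you are trying to establish. A cleaner route to what you want is the limit form of spreading out: for each maximal point $\eta$ of $|X|$, $I_{X,\eta}$ is separated over $\kappa(\eta)$, hence $I_X$ is separated over an open neighborhood of $\eta$ by EGA~IV~8.10.5(v) applied along $\mathcal{O}_{X,\eta}=\mathrm{colim}\,\mathcal{O}_X(U)$; the union of these neighborhoods is the desired dense open, and no immersion property of $e$ is needed.
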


\begin{proof}
There exists a dense open substack $V$ of $X$ with flat inertia. Then $V$ is
a gerbe over an algebraic space \cite[Tag 06QJ]{SP}. Since any group
algebraic space of finite presentation over a field is separated (and in
fact a group scheme, see \cite[Proposition 5.1.1]{Behrend} or \cite[Tags
08BH, 0B8G]{SP}), there exists a dense open substack $U$ of $V$ with
separated inertia. Then $U$ has separated diagonal by fppf descent
\cite[Tags 0CPS, 0DN6]{SP}.
\end{proof}

\begin{lemma}\label{l.Behrend}
Let $\cF$ be a lisse Weil $\Qlb$-sheaf on a stack $X$. Then there exists a
dominant open immersion $j\colon U\to X$ and a gerbe-like morphism $f\colon
U\to Y$, where $Y$ is a Deligne-Mumford stack, such that the adjunction map
$f^*\cG\to j^*\cF$, where $\cG\colonequals f_*j^*\cF$, is an isomorphism.
\end{lemma}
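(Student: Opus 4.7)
The plan is to restrict $\cF$ to a sufficiently small dense open of $X$ where the inertia becomes smooth with a well-defined connected component, and then kill that connected component by forming a rigidification $f\colon U\to Y$ with geometrically connected fibers onto a Deligne--Mumford stack $Y$. The descent of $\cF$ along $f$ will then be automatic because a lisse sheaf, viewed as a representation of the fundamental group, cannot see connected inertia.

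First, using Lemma~\ref{l.sep} one shrinks $X$ to a dense open with separated diagonal, and then further to a dense open where $X$ is reduced, normal (hence geometrically unibranch by Remark~\ref{r.unib}), and connected. Applying generic flatness and generic smoothness to the inertia stack $I_X\to X$, together with the openness of the relative identity component of a smooth group space with locally constant fiber dimension, one shrinks once more to arrange that $I_X\to X$ is smooth and that $N\colonequals I_X^0$ is a smooth closed normal subgroup space of $I_X$ with geometrically connected fibers.

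Next, one forms the rigidification $f\colon U\to Y$ of $X=U$ by $N$. This is an $N$-gerbe; its geometric fibers are the classifying stacks $BN_y$, which are connected since $N$ has connected fibers, and the inertia of $Y$ is the \'etale quotient $I_X/N$, so $Y$ is Deligne--Mumford. To descend $\cF$, use Remark~\ref{r.unib} to identify $j^*\cF$ with a representation $\rho$ of the Weil group $W(U,\bar x)$; it then suffices to check that $\pi_1(U,\bar x)\to \pi_1(Y,\bar y)$ is an isomorphism, for then $\rho$ factors through $W(Y,\bar y)$ and the resulting lisse sheaf on $Y$ pulls back to $j^*\cF$. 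Surjectivity is Lemma~\ref{l.subm} applied to the universally submersive $f$ with geometrically connected fibers, and injectivity follows from the triviality of $\pi_1$ of the classifying stack of a smooth connected group, which accounts for the kernel. Given the descent, the adjunction $f^*\cG\to j^*\cF$ with $\cG\colonequals f_*j^*\cF$ is an isomorphism because $f_*f^*\simeq \id$ on lisse sheaves when $f$ is an $N$-gerbe with $N$ smooth and connected.

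The main technical obstacle is carrying out the rigidification and the fundamental-group comparison rigorously in the generality of Artin stacks without separated diagonal. Specifically, one must verify that the relative identity component $N$ can be extracted as a flat closed normal subgroup space on a dense open, that the rigidification by $N$ yields a Deligne--Mumford stack with the expected inertia, and that $\pi_1$ genuinely ignores a smooth connected gerbe band; each of these is standard in the representable case but requires the reductions of the first paragraph to be valid in the stacky setting considered here.
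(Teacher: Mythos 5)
Your overall strategy is sound and close in spirit to the paper's, but your execution diverges in two ways, one of which introduces a genuine error and the other a gap. First, for the structural step (producing $U$ and the gerbe $f\colon U\to Y$ onto a Deligne--Mumford stack with connected band), the paper simply cites Behrend's d\'evissage result \cite[Propositions 5.1.11, 5.1.14]{Behrend}, whereas you attempt to re-derive it. Your re-derivation invokes ``generic smoothness'' of the inertia $I_X\to X$, which is false in positive characteristic: the inertia need not be smooth anywhere (e.g.\ $X=B\mu_p$ over $\F_p$). Only generic flatness holds, and one must instead use that the relative identity component of a flat group space of finite presentation is open after shrinking, and that the component group is finite \'etale over a dense open -- the band $N$ itself may well be non-smooth. (The subsequent appeals to smoothness of $N$ are red herrings: connectedness alone is what is used, both for the rigidification and for the vanishing of $\pi_1(BN)$.) Your preliminary shrinking ``to a dense open where $X$ is reduced, normal'' is also not always available as stated -- a stack with generic nilpotents has no dense reduced open substack -- though this is recoverable by passing to $X_\red$, since lisse sheaf theory is insensitive to nilpotents.

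Second, and more substantially, your descent argument is genuinely different from the paper's. You deduce the isomorphism $f^*\cG\simto j^*\cF$ from the claim that $\pi_1(U)\to\pi_1(Y)$ is an isomorphism, using Lemma~\ref{l.subm} for surjectivity and a homotopy exact sequence for gerbes for injectivity. The surjectivity is fine, but the injectivity -- i.e.\ that the kernel is generated by the images of $\pi_1$ of the fibers $BN_{\bar y}$, and that those are trivial -- is a nontrivial fact about fundamental groups of gerbes that you do not justify, and it is the crux of your argument. It also forces you to first reduce to the geometrically unibranch case so that lisse Weil sheaves become Weil-group representations (Remark~\ref{r.unib}), adding further reductions. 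The paper sidesteps all of this by applying generic base change \cite[Proposition 2.11]{IZ} to reduce the adjunction $f^*f_*(j^*\cF)\to j^*\cF$ to a fiberwise statement, where it is immediate: on each geometric fiber $U_y = BN_y$ with $N_y$ connected, every sheaf is constant. That argument is both more elementary and works without any normality or reducedness hypothesis. If you want to salvage your approach, you would need to either supply a reference for the $\pi_1$-exact sequence for gerbes with connected band, or replace that step by the paper's fiberwise computation.
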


Recall that any gerbe-like morphism $f\colon U\to Y$ of stacks is a smooth
universal homeomorphism \cite[Tags 06R9, 0DN8]{SP}. Note that $f^*\cG\simeq
j^*\cF$ implies that $\cG$ is a lisse Weil $\Qlb$-sheaf, which is simple if
$j^*\cF$ is simple.  Moreover, if $X$ is geometrically unibranch, then $Y$
is geometrically unibranch, and $\det(\cF)$ and $\det(\cG)$ have the same
(possibly infinite) order by Lemma \ref{l.subm}.

\begin{proof}
By a d\'evissage result of Behrend \cite[Propositions 5.1.11,
5.1.14]{Behrend}, there exists a dominant open immersion $j\colon U\to X$
and a gerbe-like morphism $f\colon U\to Y$, where $Y$ is a Deligne-Mumford
stack, such that the diagonal of $f$ has connected geometric fibers. By
generic base change \cite[Proposition 2.11]{IZ}, up to shrinking $Y$ (and
$U$), we may assume that $f_*\cF$ commutes with base change. It then
suffices to check that the adjunction $f_y^*f_{y*}(\cF\res U_y)\to \cF\res
U_y$ is an isomorphism for every geometric fiber $f_y\colon U_y\to y$ of
$f$. Since $U_y$ is the classifying stack of a connected group scheme over
$y$, any sheaf on $U_y$ is constant and the assertion is trivial.
\end{proof}

\begin{remark}\label{r.gerbe}
Since any gerbe over a finite field is neutral \cite[Corollary
6.4.2]{Behrend}, any point $y\in Y(\F_{q^n})$ lifts to a point of
$U(\F_{q^n})$. In particular, $E(\cG)=E(j^*\cF)$.
\end{remark}

\begin{proof}[Proof of Proposition \ref{p.twist}]
Applying Lemma \ref{l.Behrend}, we are reduced to the case where $X$ is a
Deligne-Mumford stack. Here for the second assertion of the proposition, we
have used the fact that $\cF\simeq j_*j^*\cF$. By Lemma \ref{l.sep} and
\cite[Corollaire 6.1.1]{LMB}, up to shrinking $X$, we may assume $X=[Y/G]$,
where $Y$ is a separated scheme and $G$ is a finite group acting on $Y$. Up
to replacing $Y$ by a connected component and $G$ by the decomposition
group, we may assume that $Y$ is irreducible. Let $g\colon Y\to X$. Then
$g^*\cF$ corresponds to the restriction to the open normal subgroup
$\pi_1(Y)\lhd\pi_1(X)$ of quotient $G$. By the case of schemes of the first
assertion \cite[Proposition 1.3.4 (i)]{WeilII}, there exists $a\in
\Qlb^\times$ such that $\det(g^*\cF^{(a)})^{\otimes n}\simeq \Qlb$ for some
$n\ge 1$. Then $\det(\cF^{(a)})^{\otimes (n\cdot \#G)}\simeq \Qlb $. This
finishes the proof of the first assertion of Proposition \ref{p.twist}. Now
assume that $\cF$ is simple and $\det(\cF)$ has finite order. By Lemma
\ref{l.finite} below and the case of schemes of the second assertion of
Proposition \ref{p.twist} \cite[Proposition 1.3.14]{WeilII}, $g^*\cF$ is a
$\Qlb$-sheaf, so that the same holds for $\cF$ by Lemma \ref{l.descent}.
\end{proof}

\begin{lemma}\label{l.finite}
Let $f\colon X\to Y$ be a finite \'etale morphism of geometrically unibranch
stacks and let $\cF$ be a simple lisse Weil $\Qlb$-sheaf on $Y$ such that
$\det(\cF)$ has finite order. Then $f^*\cF\simeq \bigoplus_i\cF_i$ is
semisimple with simple factors $\cF_i$ such that each $\det(\cF_i)$ has
finite order.
\end{lemma}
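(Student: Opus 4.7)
My plan is to reduce to the case where $X$ and $Y$ are connected (hence, being geometrically unibranch, irreducible), so that Weil $\Qlb$-sheaves correspond to continuous representations of the Weil groups (Remark \ref{r.unib}). Then $\cF$ corresponds to an irreducible representation $\rho$ of $W(Y,\bar y)$ on some $V$, and $f^*\cF$ corresponds to the restriction $\rho\res H$, where $H\colonequals W(X,\bar x)\subseteq G\colonequals W(Y,\bar y)$ is a subgroup of finite index (here $\bar x$ lies over $\bar y$). I would then choose a further finite \'etale Galois cover $f'\colon X'\to Y$ dominating $f\colon X\to Y$, i.e., take the normal core $N$ of $H$ in $G$; then $N\lhd G$ has finite index, $X'$ is irreducible geometrically unibranch, and by the normal Clifford theorem the restriction $\rho\res N$ is semisimple and its simple constituents $U_1,\dots,U_s$ form a single orbit under the finite group $G/N$.

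For semisimplicity of $f^*\cF$, I would deduce from the semisimplicity of $\rho\res N$ that $\rho\res H$ is semisimple by the standard averaging trick: given any $H$-submodule $W\subseteq V$, an $N$-equivariant projection $\pi\colon V\to W$ (which exists since $\rho\res N$ is semisimple) can be averaged over the finite group $H/N$ (invertible in $\Qlb$) to obtain an $H$-equivariant projection. Write $f^*\cF\simeq\bigoplus_i\cF_i$ with $\cF_i$ simple.

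For the determinant claim, I would apply Proposition \ref{p.twist} on the irreducible geometrically unibranch stack $X'$ to find $a\in\Qlb^\times$ with $\det(U_1^{(a)})$ of finite order. The key point is that the same $a$ works for every $U_k$: since $\Qlb^{(a)}$ is pulled back from $\Spec\F_q$, it is $G$-invariant, and the $U_k$'s are $G$-conjugates of $U_1$, so $\det(U_k^{(a)})$ has finite order as well. Taking the determinant of the identity $f'^*\cF\simeq\bigoplus_k U_k$ then yields
\[f'^*\det(\cF)\simeq\bigotimes_k\det(U_k)\simeq\Qlb^{(a^{-r})}\otimes\eta,\]
where $r=\mathrm{rank}(\cF)$ and $\eta$ is a finite order character. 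Since $\det(\cF)$ has finite order by hypothesis, the left side has finite order, forcing $a^r$, hence $a$, to be a root of unity. Therefore each $\det(U_k)$ has finite order.

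Finally, each $\cF_i\res X'$ is a direct sum of some of the $U_k$, so $\det(\cF_i)\res X'$ is a product of characters of finite order, hence has finite order. I would conclude that $\det(\cF_i)$ itself has finite order by the following easy fact: if $\chi$ is a $\Qlb^\times$-valued character of a group $A$ and $B\subseteq A$ is a finite index subgroup with $\chi\res B$ of finite order, then some power $\chi^m$ is trivial on $B$, hence factors through the finite quotient $A/\langle B\rangle^{\mathrm{normal}}$ and has finite order, so $\chi$ does too. Applied to $W(X')\subseteq W(X)$ (which has finite index because $X'\to X$ is finite \'etale, cf.\ Lemma \ref{l.subm}), this yields the claim. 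The main obstacle I anticipate is bookkeeping in the Clifford/Galois-closure step, particularly verifying that $\Qlb^{(a)}$ is $G$-invariant on $X'$; this reduces to the fact that the Weil group of $\F_q$ is abelian, so conjugation in $W(Y)$ descends to the identity on $W(\F_q)$.
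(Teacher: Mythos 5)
Your proof is correct and follows essentially the same approach as the paper's: reduce to the irreducible case, pass to the Galois closure, apply Proposition \ref{p.twist} and Clifford theory to one simple constituent, use transitivity of the $G$-action together with the hypothesis that $\det(\cF)$ has finite order to force the twisting parameter $a$ to be a root of unity. You are more explicit about the final descent from the Galois closure $X'$ back to $X$ (via the finite-index character argument), a step the paper compresses into the phrase ``we may assume that $f$ is a Galois cover.''
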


\begin{proof}
We may assume that $X$ and $Y$ are irreducible. Since $\pi_1(X)\subseteq
\pi_1(Y)$ is an open subgroup of finite index, $f^*\cF\simeq
\bigoplus_i\cF_i$ is semisimple. For the assertion on simple factors, we may
assume that $f$ is a Galois cover of group $G$. Fix an $i_0$. By the first
assertion of Proposition \ref{p.twist}, there exists $a\in \Qlb^\times$ such
that $\det(\cF_{i_0}^{(a)})$ has finite order. Since the simple factors are
permuted transitively by $G$, $\det(\cF_{i}^{(a)})$ has finite order for
each $i$. It follows that $\det(\cF^{(a)})\simeq
\bigotimes_{i}\det(\cF_i^{(a)})$ has finite order. This implies that $a$ is
a root of unity. Therefore, each $\det(\cF_i)$ has finite order.
\end{proof}

\begin{remark}
\begin{enumerate}
\item In the above proof of Proposition \ref{p.twist}, we first reduce to
    the case of quotient stacks $[Y/G]$ by finite groups $G$ by Lemma
    \ref{l.Behrend}, and then reduce to the case of schemes. The same
    strategy will be used for Theorems \ref{t.2}, \ref{t.1}, and
    \ref{t.slopes}. Another approach is to use the generic existence of a
    smooth presentation with geometrically connected fibers
    (\cite[Th\'eor\`eme 6.5]{LMB} and Lemma \ref{l.sep}) to reduce
    directly to the case of schemes, which works for Proposition
    \ref{p.twist} and Theorems \ref{t.2} (1) and \ref{t.slopes}, but fails
    for Theorems \ref{t.2} (2) and \ref{t.1}.

\item The reduction from quotient stacks $[Y/G]$ to schemes here and in
    Theorem \ref{t.2} (1) uses Lemma \ref{l.finite} applied to the finite
    \'etale cover $Y\to [Y/G]$. We may replace this by Lemma \ref{l.subm}
    applied to the $\GL_m$-torsor $Y\wedge^G \GL_m\to [Y/G]$, where $G\to
    \GL_m$ is a chosen embedding, making the proofs closer to those of
    Theorems \ref{t.2} (2), \ref{t.1}, and \ref{t.slopes}.

\item We can also prove Proposition \ref{p.twist} directly by imitating
    the proof of the case of schemes. Indeed, as in \cite[Proposition
    1.3.4, Variante]{WeilII}, the first assertion follows from the case of
    curves by joining by curves (see the proof of Proposition
    \ref{p.eigen}) and Chebotarev's density theorem (Proposition
    \ref{p.inj}). As in \cite[Theorem 1.3.8]{WeilII}, this implies that
    the radical of $G^{00}$ is unipotent, where $G^{00}$ is the identity
    component of the geometric monodromy group, which is a theorem of
    Grothendieck in the case of schemes. Finally, as in \cite[Proposition
    1.3.14]{WeilII}, the second assertion follows from this and the first
    assertion.
\end{enumerate}
\end{remark}

Proposition \ref{p.twist} has the following consequence on the structure of
Weil $\Qlb$-sheaves. For a stack $X$, we let $\Sh(X,\Qlb)^{(a)}\subseteq
\Sh^W(X,\Qlb)$ denote the full subcategory spanned by Weil $\Qlb$-sheaves of
the form $\cF^{(a)}$ with $\cF\in\Sh(X,\Qlb)$. The subcategory only depends
on the class of $a$ in $\Qlb^\times/\Zlb^\times$, where $\Zlb$ denotes the
ring of integers of $\Qlb$.

\begin{prop}\label{p.Weil}
Let $X$ be a stack. We have a canonical decomposition:
\[\Sh^W(X,\Qlb)\simeq \bigoplus_{a\in \Qlb^\times/\Zlb^\times} \Sh(X,\Qlb)^{(a)}.\]
\end{prop}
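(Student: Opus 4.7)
The plan is to use Proposition \ref{p.twist} together with a Jordan--Hölder argument, after establishing vanishing of $\Hom$ and $\Ext^1$ across different twist classes. I first note that $\Sh(X,\Qlb)^{(a)}$ depends only on the image of $a$ in $\Qlb^\times/\Zlb^\times$: since $\Zlb^\times$ is the maximal compact subgroup of $\Qlb^\times$, the Weil character $\Fr_q\mapsto u$ extends continuously to $\Gal(\Fqb/\F_q)$ precisely when $u\in\Zlb^\times$, so for such $u$ the sheaf $\Qlb^{(u)}$ lies in $\Sh(\Spec\F_q,\Qlb)$ and tensoring with it preserves $\Sh(X,\Qlb)$.

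The main technical step is the following vanishing: for any $\cF,\cG\in\Sh(X,\Qlb)$ and $b\in\Qlb^\times\setminus\Zlb^\times$,
\[\Hom_{\Sh^W(X,\Qlb)}(\cF,\cG^{(b)})=0=\Ext^1_{\Sh^W(X,\Qlb)}(\cF,\cG^{(b)}).\]
Applying the short exact sequence of Remark \ref{r.ext}(1), together with the analogous identity $\Hom_{\Sh^W}(\cF,\cG^{(b)})=\Hom_{\Sh(X_{\Fqb},\Qlb)}(\cF_{\Fqb},\cG_{\Fqb})^\Z$, one computes these groups as invariants and coinvariants of the Frobenius action on $V=\Hom_{\Sh(X_{\Fqb},\Qlb)}(\cF_{\Fqb},\cG_{\Fqb})$ and $V'=\Ext^1_{\Sh(X_{\Fqb},\Qlb)}(\cF_{\Fqb},\cG_{\Fqb})$, where the twist of the target by $\Qlb^{(b)}$ multiplies the native Frobenius action by $b$. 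Choosing a finite extension $E_\lambda/\Q_\ell$ to which both $\cF$ and $\cG$ descend as $E_\lambda$-sheaves, $V$ and $V'$ become finite-dimensional $E_\lambda$-vector spaces on which $\Gal(\Fqb/\F_q)$ acts continuously; hence the image of Frobenius lies in a compact subgroup of $\GL$, and its eigenvalues on $V,V'$ lie in $\cO_\lambda^\times\subseteq\Zlb^\times$. Multiplying by $b\notin\Zlb^\times$ produces no eigenvalue equal to $1$, so both invariants and coinvariants vanish.

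With this vanishing, each simple Weil $\Qlb$-sheaf $\cS$ on $X$ lies in $\Sh(X,\Qlb)^{(a)}$ for a unique $[a]$: restricting to a dense open $U$ of the support of $\cS$ with $U$ irreducible and geometrically unibranch, $\cS|_U$ is a simple lisse Weil $\Qlb$-sheaf, and Proposition \ref{p.twist} yields $a\in\Qlb^\times$ such that $(\cS|_U)^{(a^{-1})}$ is a $\Qlb$-sheaf; since a simple constructible sheaf is determined by, and an extension of, its restriction to a dense open in its support in a manner that commutes with twisting and preserves the $\Qlb$-sheaf property, one concludes $\cS^{(a^{-1})}\in\Sh(X,\Qlb)$, and uniqueness of $[a]$ follows from the vanishing. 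Finally, since $\Sh^W(X,\Qlb)$ is noetherian, every $\cE\in\Sh^W(X,\Qlb)$ admits a finite Jordan--Hölder filtration; the $\Ext^1$-vanishing canonically splits $\cE$ as $\bigoplus_{[a]}\cE_a$, and stability of $\Sh(X,\Qlb)$ under extension in $\Sh^W$ (Remark \ref{r.ext}, applied after twisting by $\Qlb^{(a^{-1})}$) gives $\cE_a\in\Sh(X,\Qlb)^{(a)}$; the $\Hom$-vanishing ensures functoriality of the decomposition. The principal obstacle is the vanishing step in the second paragraph, which crucially uses that $\cF,\cG$ are honest $\Qlb$-sheaves (not merely Weil sheaves) to obtain $\ell$-adic integrality of Frobenius eigenvalues on the geometric $\Hom$ and $\Ext^1$ spaces---a property that would fail for general Weil sheaves.
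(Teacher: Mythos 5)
Your proof is correct and takes essentially the same route as the paper: generation from Proposition \ref{p.twist} and a $\Hom$/$\Ext^1$ orthogonality across twist classes proved by noting that Frobenius eigenvalues on the geometric $\Hom$ and $\Ext^1$ between genuine $\Qlb$-sheaves are $\ell$-adic units (the paper packages this same observation as the statement that $H^i(R\pi_{X*}R\cHom(A,B))^{(b/a)}$ is the $(b/a)$-twist of a $\Qlb$-sheaf on $\Spec(\F_q)$). The only step worth tightening is your claim that a simple constructible $\cS$ is recovered, compatibly with twisting, from its restriction to a geometrically unibranch dense open $U$ of its support: the identity $\cS\simeq i_*j_!(\cS|_V)$ holds only when $V$ is the full lisse locus on the support, so if $U\subsetneq V$ is shrunk to achieve geometric unibranchness one should additionally use the embedding $\cS|_V\hookrightarrow j_{U*}(\cS|_U)$ and stability of $\Sh(V,\Qlb)\subseteq\Sh^W(V,\Qlb)$ under subobjects---a point the paper also leaves implicit in its one-line treatment of generation.
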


\begin{proof}
It suffices to show the following:
\begin{itemize}
\item (generation) Every object of $\Sh^W(X,\Qlb)$ is a successive
    extension of objects of $\Sh(X,\Qlb)^{(a)}$;
\item (orthogonality) $\Ext^i(A^{(a)},B^{(b)})=0$ for $A,B\in
    \Sh(X,\Qlb)$, $a/b\not\in \Zlb^\times$ and $i=0,1$.
\end{itemize}
The first point follows from Proposition \ref{p.twist}. Let us show the
orthogonality. We have
$\Hom(A^{(a)},B^{(b)})=\Hom(A^{(a)}_{\Fqb},B^{(b)}_{\Fqb})^{W(\Fqb/\F_q)}$
and a short exact sequence (Remark \ref{r.ext} (1), (2))
\[0\to \Hom(A^{(a)}_{\Fqb},B^{(b)}_{\Fqb})_{W(\Fqb/\F_q)} \to \Ext^1(A^{(a)},B^{(b)})\to \Hom(A^{(a)}_{\Fqb},B^{(b)}_{\Fqb}[1])^{W(\Fqb/\F_q)}\to 0.\]
The $\Qlb$-vector space $\Hom(A^{(a)}_{\Fqb},B^{(b)}_{\Fqb}[i])$ with
$W(\Fqb/\F_q)$-action can be identified with the Weil $\Qlb$-sheaf
$(R^i\pi_{X*}R\cHom(A,B))^{(b/a)}$ on $\Spec(\F_q)$, where $\pi_X\colon X\to
\Spec(\F_q)$. The eigenvalues of $\Fr_q$ are all in the class of $b/a$, so
the action has no nonzero invariants or coinvariants. Therefore,
$\Ext^i(A^{(a)},B^{(b)})=0$ for $i=0,1$.
\end{proof}

\begin{cor}
Let $X$ be a stack. A Weil $\Qlb$-sheaf $\cF$ is a $\Qlb$-sheaf if and only
if for every $x\in X(\F_{q^n})$, $n\ge 1$, the eigenvalues of $\Fr_x$ acting
on $\cF_{\bar x}$ are $\ell$-adic units.
\end{cor}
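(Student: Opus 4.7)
The plan is to use the canonical decomposition
\[\Sh^W(X,\Qlb)\simeq \bigoplus_{a\in \Qlb^\times/\Zlb^\times} \Sh(X,\Qlb)^{(a)}\]
from Proposition \ref{p.Weil}. The ``only if'' direction is the standard fact that a $\Qlb$-sheaf, after base change to some finite extension $E_\lambda$ of $\Q_\ell$, descends to an $\cO$-sheaf, so Frobenius acts via a representation valued in $\GL_n(\cO)$; hence all Frobenius eigenvalues are $\ell$-adic units.

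For the ``if'' direction, I would write $\cF = \bigoplus_a \cF_a^{(a)}$ with $\cF_a \in \Sh(X,\Qlb)$ and $a$ running over a set of representatives of $\Qlb^\times/\Zlb^\times$. Noting that $\Sh(X,\Qlb)^{(a)} \subseteq \Sh(X,\Qlb)$ precisely when $a\in\Zlb^\times$ (since $\Qlb^{(a)}$ on $\Spec(\F_q)$ is a $\Qlb$-sheaf iff $a$ is an $\ell$-adic unit), the goal becomes to show that $\cF_a=0$ for every $[a]\ne 1$. Suppose for contradiction that $\cF_a\ne 0$ for some such $a$. The support of $\cF_a$ is a nonempty closed subset of $|X\otimes_{\F_q}\Fqb|$ that is stable under Frobenius, so it descends to a nonempty closed subset of $|X|$; since $X$ is of finite presentation over $\F_q$, this subset contains a closed point, giving some $x\in X(\F_{q^n})$ with $(\cF_a)_{\bar x}\ne 0$.

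At such a point, $\Fr_x=\Fr_q^n$ acts on the tensor factor $(\Qlb^{(a)})_{\bar x}$ by $a^n$, so the eigenvalues of $\Fr_x$ on the direct summand $(\cF_a^{(a)})_{\bar x}$ of $\cF_{\bar x}$ are exactly $a^n$ times the eigenvalues of $\Fr_x$ on $(\cF_a)_{\bar x}$. The latter are $\ell$-adic units by the ``only if'' direction applied to $\cF_a\in\Sh(X,\Qlb)$. By hypothesis, the eigenvalues on $\cF_{\bar x}$, hence on its summand, are also $\ell$-adic units, forcing $v_\ell(a^n)=0$ and therefore $v_\ell(a)=0$, i.e.\ $[a]=1$, a contradiction.

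The only nontrivial point is the existence of an $\F_{q^n}$-point in the support of $\cF_a$; this is immediate from the fact that the support of a constructible sheaf equipped with a Weil-group action is Frobenius-stable and thus descends to a constructible subset of $|X|$, in which closed points are dense and correspond to finite Frobenius orbits.
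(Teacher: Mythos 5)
Your proof is correct and follows the route the paper intends: the corollary is stated as an immediate consequence of Proposition \ref{p.Weil}, and you correctly identify that $\cF$ is a $\Qlb$-sheaf iff its component in $\Sh(X,\Qlb)^{(a)}$ vanishes for all $[a]\neq 1$, then use a rational point in the support of a hypothetically nonzero component to force $v_\ell(a)=0$. The only (easily repaired) imprecision is in passing from a nonempty closed substack to an $\F_{q^n}$-point — one should pull back along a smooth presentation $Y\to X$ by a scheme and take a closed point of the nonempty closed subscheme of $Y$ — but this is a routine reduction used throughout the paper.
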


By reducing to curves (see Proposition \ref{p.eigen} below), we see that for
lisse Weil $\Qlb$-sheaves, it suffices to check the condition in the
corollary for one given $x$ in each connected component of $X$.

\section{Frobenius eigenvalues}\label{s.1t}

In this section, we prove Theorem \ref{t.2} (1) on Frobenius eigenvalues. We
then extend the theorem of Drinfeld and Kedlaya on Newton polygons from
smooth schemes to normal stacks (Theorem \ref{t.slopes}). Finally, following
Drinfeld \cite[Appendix B]{Dr}, we study the category of weakly motivic
complexes, whose cohomology sheaves have ``motivic'' Frobenius eigenvalues
(Theorems \ref{t.mot} and \ref{t.D}).

\begin{notation}\label{n.W}
We let $M_0$ denote the group of algebraic numbers $\alpha\in \overline
\Q^\times$ of weight $0$ relative to $q$, namely, such that for every
Archimedean place $\lambda$ of $\Q(\alpha)$, we have $\lvert
\alpha\rvert_\lambda =1$. We let $W_0(q)\subseteq M_0$ denote the subgroup
of \emph{$q$-Weil numbers of weight $0$}.  For a subset $S\subseteq \Q$ of
slopes, we let $W^S_0(q)\subseteq W_0(q)$ denote the subgroup of $\alpha\in
W_0(q)$ of slopes in $S$, namely such that for every valuation $v$ on
$\Q(\alpha)$ satisfying $v(q)=1$, we have $v(\alpha)\in S$.
\end{notation}

Note that $W_0(q)$ only depends on the characteristic $p$ of $\F_q$ and that
$W^{\{0\}}_0(q)$ is simply the set of roots of unity in $\Qb$.

In the notation above, Theorem \ref{t.2} (1) says that the eigenvalues of
$\Fr_x$ acting on $\cF_{\bar x}$ belong to
$W^{[-\frac{r-1}{2},\frac{r-1}{2}]\cap \Q}_0(q^n)$ for all $x\in
X(\F_{q^n})$ and all $n\ge 1$. As mentioned earlier, we prove this by
reducing to the case of schemes. For the reduction to work, we need to show
that the statement can be checked on any dense open substack. We start by
reviewing Deligne's argument of joining by curves \cite[Proposition
1.9]{Deligne} and extending it to stacks.

\begin{prop}\label{p.eigen}
Let $X$ be a connected stack. Then there exists an integer $M\ge 1$ such
that, for every lisse Weil $\Qlb$-sheaf $\cF$ on $X$ of rank $r\ge 1$, and
for all $m,n\ge 1$, $x\in X(\F_{q^m})$, $y\in X(\F_{q^n})$, if we let
$\alpha_1,\dots,\alpha_r$ (resp.\ $\beta_1,\dots,\beta_r$) denote the
eigenvalues of $\Fr_x$ (resp.\ $\Fr_y$) acting on $\cF_{\bar x}$ (resp.\
$\cF_{\bar y}$), then, up to reordering, we have
$\beta_i^{1/n}/\alpha_i^{1/m}\in W_0^{[-M(r-1),M(r-1)]\cap \Q}(q)$ for $1\le
i\le r$.
\end{prop}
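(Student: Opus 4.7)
\emph{Plan.}
The proof extends Deligne's joining-by-curves argument \cite[Proposition~1.9]{Deligne} to Artin stacks. The input is the case $X$ a smooth curve, which is L.~Lafforgue's theorem (weight $0$) combined with V.~Lafforgue's slope bound $(r-1)/2$; from this, for a curve the ratio $\beta_i^{1/n}/\alpha_i^{1/m}$ is itself a $q$-Weil number of weight $0$ with slope in $[-(r-1),(r-1)]$, so $M=1$ suffices.

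First I would reduce to the case of a scheme. Choose a smooth surjective presentation $p\colon Y\to X$ with $Y$ a scheme of finite type over $\F_q$. Any point $x\in X(\F_{q^m})$ lifts to some $x'\in Y(\F_{q^{mk}})$ for a suitable $k\ge 1$, since the fiber $Y\times_X x$ is a non-empty smooth $\F_{q^m}$-scheme and therefore acquires a rational point after a finite extension (Lang--Weil). Under such a lift, $\Fr_{x'}$ acts on $(p^*\cF)_{\bar{x'}}\simeq \cF_{\bar x}$ as the $k$-th power of $\Fr_x$, so its eigenvalues are $\alpha_i^k$. Since $(\alpha_i^k)^{1/(mk)}=\alpha_i^{1/m}$, the ratios $\beta_i^{1/n}/\alpha_i^{1/m}$ for $(X,\cF)$ coincide (up to reordering and roots of unity, which do not affect slopes) with the analogous ratios for $(Y,p^*\cF)$ at any pair of lifts $x',y'$; no bound on $k$ is required. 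Hence it suffices to prove the proposition for $Y$, with $M$ independent of the lifts chosen.

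Even though $Y$ may be disconnected, the images of its connected components $Y_\alpha$ are open in $X$ and, since $X$ is connected, form a cover whose nerve is connected. Any two components are linked by a chain of adjacent components, where adjacent components share a geometric point of $X$ admitting compatible lifts in each of them (producing Frobenius eigenvalues on $p^*\cF$ that agree up to a common exponentiation). Chaining through such lifts reduces the assertion to the case of a connected scheme. For a connected scheme $Y$, I would invoke Deligne's \cite[Proposition~1.9]{Deligne}: there is an integer $M_Y$ such that any two closed points of $Y$ can be joined by a chain of at most $M_Y$ smooth curves over $\F_q$ mapping to $Y$, consecutive ones sharing a geometric point. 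On each such curve the pullback of $p^*\cF$ is a lisse Weil sheaf of rank $\le r$, to which the curve case of Theorem~\ref{t.2}~(1) applies, giving slope in $[-(r-1)/2,(r-1)/2]$ for the Frobenius-eigenvalue ratio across the curve; composing along the chain yields the bound with $M$ a fixed multiple of $M_Y$ and of the number of connected components of $Y$, depending only on $X$.

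The main obstacle is the uniformity of the chain length $M_Y$ in Deligne's joining-by-curves: that a single $M_Y$ works for all pairs of points is the nontrivial geometric content of \cite[Proposition~1.9]{Deligne} and relies on Bertini-type arguments for families of curves together with a Noetherian finiteness argument. The stack-theoretic new input, namely lifting points across a smooth presentation, is harmless because the base-change degree $k$ cancels after taking $mk$-th roots, so neither $k$ nor the number of components needs to be controlled beyond mere existence.
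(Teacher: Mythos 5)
Your proposal is correct and takes essentially the same route as the paper: reduce to a presentation of $X$ by an algebraic space, use a graph structure on the components of that presentation to organize a chain, and invoke Lafforgue's curve case along each link. One clarification is worth making, though. You identify the "main obstacle" as the uniformity of the chain length and suggest this relies on Bertini-type arguments plus a Noetherian finiteness argument from Deligne's Proposition 1.9. The paper's organization shows this uniformity is actually elementary and does not need to be black-boxed: one takes the intersection graph $\Gamma$ of the \emph{irreducible} components $(Y_v)$ of the presentation $Y\to X$ (edges when $Y_v\times_X Y_w\neq\emptyset$), proves $\Gamma$ is connected because $X$ is, and sets $M-1$ equal to its finite diameter. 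Within a single irreducible component, the joining-by-a-single-curve statement is Lemma \ref{l.curve}, which reduces by Chow's lemma to schemes and then to Mumford's classical argument over an algebraically closed field — no Bertini or chain-length estimate is needed there. Your version via connected components and Deligne's scheme-level statement works, but produces a less transparent $M$; the paper's graph of irreducible components over $X$ (with a chosen closed point on each fiber product $Y_v\times_X Y_w$ as the link) makes the bookkeeping and the bound $M(r-1)$ cleaner. Your observation that the lifting degree $k$ across the presentation cancels after taking $mk$-th roots is correct and is also implicit in the paper's setup.
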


In the situation of the proposition, if $\alpha_1,\dots,\alpha_r\in W_0(q)$,
then $\beta_1,\dots,\beta_r\in W_0(q)$.

\begin{proof}
Let $f\colon Y\to X$ be a submersive morphism (for example, a flat
presentation) with $Y$ a separated algebraic space. Consider the
intersection graph $\Gamma$ of the irreducible components of $Y$ over $X$.
The vertices are the irreducible components of $Y$. There is an edge between
two vertices $v$ and $w$ if and only if the corresponding components $Y_v$
and $Y_w$ are such that $Y_v\times_X Y_w$ is nonempty. If $Y_v$ and $Y_w$
are on the same connected component of $Y$, then $v$ and $w$ belong to the
same connected component of~$\Gamma$. Thus for each component $V$ of
$\Gamma$, $Y_V\colonequals \bigcup_{v\in V}Y_v$ is a union of connected
components of $Y$. It follows that $f(Y_V)$ is open and closed, because
$f^{-1}(f(Y_V))=Y_V$. Since $X$ is connected, so is $\Gamma$.

We take $M-1$ to be the diameter of the graph $\Gamma$. For each edge
$e=(v,w)$ of the graph, choose a closed point $x_e$ of $Y_v\times_X Y_w$.
For $x$ and $y$ as in the statement of the proposition, let $v$ and $w$ be
vertices such that $\bar x$ lifts to $Y_v$ and $\bar y$ lifts to $Y_w$. Let
$v=v_1\frac{e_1}{} v_2 \dotsm v_{N-1} \frac{e_{N-1}}{} v_N=w$ be a path of
length $N-1\le M-1$. By Lemma \ref{l.curve} below, there exists a diagram
above $X$
\[x_0\to C_1 \leftarrow x_1 \to \dots \leftarrow x_{N-1} \to C_N \leftarrow x_N, \]
where $C_j$, $1\le j\le N$ are irreducible smooth curves over $\F_q$ above
$Y_{v_j}$, and $x_j=\Spec(\F_{q^{n_j}})$, $0\le j\le N$ such that $x_0$ is
above $x$, $x_j$ is above $x_{e_j}$ for $1\le j\le N-1$,  and $x_N$ is
above~$y$. We apply the proof of L.~Lafforgue's theorem \cite[Th\'eor\`eme
VII.6]{Lafforgue} (or V.~Lafforgue's improvement of the bound
\cite[Corollaire 2.2]{VLaff}) to $C_j$, $1\le j\le N$, and to the simple
factors of the pullback of $\cF$ to $C_j$. If $\alpha_1^{(j)},\dots,
\alpha_r^{(j)}$ denote the eigenvalues of $\Fr_{x_j}$, then up to reordering
the $r$ values, we have
$(\alpha_i^{(j)})^{1/n_j}/(\alpha_i^{(j-1)})^{1/n_{j-1}}\in
W_0^{[-(r-1),(r-1)]}(q)$.
\end{proof}

\begin{lemma}\label{l.curve}
Let $Y$ be an irreducible algebraic space separated of finite type over a
field~$k$ and let $x$ and $y$ be closed points of $Y$. There exists an
irreducible regular curve $C$ and a morphism $C\to Y$ of image containing
$x$ and $y$.
\end{lemma}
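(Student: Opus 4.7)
The plan is to reduce to the classical case where the ambient space is an irreducible quasi-projective scheme over $k$.

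First, by Chow's lemma for algebraic spaces (see \cite{SP}), since $Y$ is separated of finite type over the Noetherian scheme $\Spec k$, there exists a proper surjective morphism $p\colon Y'\to Y$ with $Y'$ a quasi-projective scheme over $k$. Replacing $Y'$ by an irreducible component dominating $Y$ (which still maps onto $Y$, being proper and dominant with irreducible target), we may assume $Y'$ is irreducible. Since $p$ is proper and surjective, the fibers $p^{-1}(x)$ and $p^{-1}(y)$ are nonempty proper $k$-schemes, hence contain closed points $x',y'\in Y'$ mapping to $x$ and $y$ respectively.

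Next I would invoke the classical fact that any two closed points of an irreducible quasi-projective scheme over $k$ lie on an irreducible closed curve. Given an embedding $Y'\hookrightarrow \mathbb{P}^n_k$, one cuts down successively by general hypersurfaces of sufficiently high degree passing through $x'$ and $y'$: for degree chosen large enough, the restricted linear system has no base locus outside $\{x',y'\}$, so a Bertini-type argument (in the style of Jouanolou) preserves irreducibility and the expected drop in dimension at each step, until one arrives at a one-dimensional irreducible closed subscheme $C_0\subseteq Y'$ containing $x'$ and $y'$. Let $C$ denote the normalization of $C_0$; then $C$ is an irreducible regular curve, and the composition $C\to C_0\hookrightarrow Y'\xrightarrow{p} Y$ is a morphism whose image contains $x$ and $y$, as desired.

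The main technical point is the Bertini step: forcing passage through the two prescribed points $x',y'$ is a codimension-two condition on the linear system, so one must check that this restricted system is still ample enough for irreducibility of the successive sections to be preserved. This is standard once the degree is taken sufficiently large (so that the restricted system is base-point-free off $\{x',y'\}$), but it is the step requiring the most care.
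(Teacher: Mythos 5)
Your reduction via Chow's lemma to the quasi-projective case is the same as the paper's, but the paper then makes one more reduction that you skip, and that omission leaves a genuine gap in the Bertini step. After Chow's lemma the paper base-changes to $\bar k$ (replacing $Y$ by an irreducible component of $Y\otimes_k\bar k$, which still surjects onto $Y$) before applying the curve-through-two-points lemma from Mumford, \emph{Abelian Varieties}, Section~6, which is stated over an algebraically closed field. You instead try to run the Bertini argument directly in $\mathbb{P}^n_k$ citing a Jouanolou-style theorem. This does not work as stated when $k$ is finite — which is precisely the situation in which the lemma is applied in this paper ($k=\F_q$): there is no ``general'' hyperplane or hypersurface over a finite field, Jouanolou's irreducibility theorem requires an infinite base field, and patching this requires the much heavier Poonen/Charles--Poonen Bertini theorems over finite fields. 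A second, related inaccuracy: your claim that forcing passage through $x'$ and $y'$ is a ``codimension-two condition on the linear system'' is only correct when $x'$ and $y'$ are $k$-rational; in general it imposes $[\kappa(x'):k]+[\kappa(y'):k]$ conditions, and there is no reason $x',y'$ should be $k$-rational after Chow's lemma.

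Both problems disappear at no cost if you insert the paper's intermediate step: since the statement only asks for a morphism $C\to Y$ with $x,y$ in the image (not a curve defined over $k$), you may pass to an irreducible component of $Y'\otimes_k\bar k$ lying over both $x$ and $y$, lift $x',y'$ to $\bar k$-rational points there, and then perform the Bertini cut over the algebraically closed field $\bar k$, where both the ``general section is irreducible'' statement and the codimension count are unproblematic. With that added reduction your argument matches the paper's.
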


\begin{proof}
By Chow's lemma \cite[IV Theorem 3.1]{Knutson}, we may assume that $Y$ is a
scheme. Replacing $Y$ by an irreducible component of $Y\otimes_k \bar k$, we
may assume that $k$ is algebraically closed. For this case, see
\cite[Section~6]{Mumford}.
\end{proof}

\begin{prop}\label{p.int}
Let $R$ be an integrally closed subring of $\Qlb$. Let $j\colon U\to X$ be a
dominant open immersion of stacks and let $\cF$ be a lisse Weil $\Qlb$-sheaf
on $X$ such that $j^*\cF$ is $R$-integral (resp.\ inverse $R$-integral).
Then $\cF$ is $R$-integral (resp.\ inverse $R$-integral).
\end{prop}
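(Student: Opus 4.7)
The plan is to use Proposition~\ref{p.eigen} (joining by curves) to relate the Frobenius eigenvalues of $\cF$ at an arbitrary point of $X$ to those at a single fixed point of $U$, exploiting the fact that a $q$-Weil number of weight $0$ is a unit at every non-$q$-adic valuation and hence does not obstruct the transfer of integrality.

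First I would reduce to the case that $X$ is connected: each connected component of $X$ meets $U$ because $j$ is dominant, so it suffices to argue one component at a time. The resulting $U$ is a nonempty Artin stack of finite presentation over $\F_q$, and therefore admits some point $y \in U(\F_{q^n})$; denote by $\beta_1,\dots,\beta_r$ the eigenvalues of $\Fr_y$ acting on $\cF_{\bar y}$. For any $x \in X(\F_{q^m})$ with eigenvalues $\alpha_1,\dots,\alpha_r$ of $\Fr_x$ on $\cF_{\bar x}$, Proposition~\ref{p.eigen} applied to the connected stack $X$, the sheaf $\cF$, and the pair of points $(x,y)$ yields, after reordering the indices, $\beta_i^{1/n}/\alpha_i^{1/m} \in W_0(q)$ for every $i$; raising to the $mn$-th power gives $\beta_i^m/\alpha_i^n \in W_0(q)$.

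Now let $v$ be any valuation on $\Qlb$ whose valuation ring contains $R$. In the setting relevant here $q$ is a unit in $R$, so $v(q)=0$, and then the membership $\beta_i^m/\alpha_i^n \in W_0(q)$ forces $v(\beta_i^m/\alpha_i^n)=0$, i.e., $n v(\alpha_i) = m v(\beta_i)$. The hypothesis that $j^*\cF$ is $R$-integral gives $v(\beta_i) \ge 0$, whence $v(\alpha_i) \ge 0$. Since this holds for every such $v$, $\alpha_i$ is integral over $R$, so $\cF$ is $R$-integral. The inverse $R$-integral case is proved by the same argument with $\alpha_i$ and $\beta_i$ replaced by their reciprocals. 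The only delicate point in the argument is the verification that $R$-integrality is detected by valuations with $v(q)=0$, which is automatic for the subrings $R \subseteq \Qlb$ arising in $\ell$-adic applications since $\ell \ne p$; this is the only input beyond Proposition~\ref{p.eigen} that the proof requires.
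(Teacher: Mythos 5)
Your approach diverges fundamentally from the paper's, and it has a genuine gap: it breaks down at the $p$-adic places, where $p$ is the characteristic of $\F_q$. Proposition~\ref{p.eigen} tells you that $\gamma_i := \beta_i^m/\alpha_i^n$ lies in $W_0^{[-mnM(r-1),\,mnM(r-1)]\cap\Q}(q)$, so $v(\gamma_i)=0$ holds for valuations $v$ with $v(q)=0$, but at a valuation $v$ with $v(q)>0$ the best Proposition~\ref{p.eigen} gives is a \emph{bounded} (possibly nonzero) discrepancy $v(\gamma_i)$. Your argument therefore establishes $v(\alpha_i)\ge 0$ only for valuations with $v(q)=0$, i.e.\ it proves the proposition only for rings $R$ in which $q$ is invertible. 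Your closing justification --- that $q$ is automatically a unit in $R$ in $\ell$-adic applications --- is false: the paper invokes Proposition~\ref{p.int} in Remark~\ref{r.shrink} with $R=\Zb$, the ring of algebraic integers, where $q$ is not a unit, and the bound on $p$-adic slopes in Theorem~\ref{t.2}~(1) is precisely obtained this way. Dropping the $p$-adic places would gut the application.

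The paper's own proof sidesteps the issue entirely: it pulls back along a smooth presentation and then a normalization to reduce to the case of a normal scheme, where $\cF\simeq j_*j^*\cF$, and then invokes the theorem that $j_*$ preserves $R$-integrality (\cite[Th\'eor\`eme 2.2, Variantes 5.1, 5.13]{Zint}, going back to Deligne \cite[XXI Th\'eor\`eme~5.6]{SGA7II}). That theorem is a nontrivial input (proved via alterations or resolution of singularities), but it controls integrality at all places including the $p$-adic ones, which is exactly what the joining-by-curves argument cannot do. To repair your argument you would need to supplement it at $p$-adic places; as written it does not prove the stated proposition.
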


Recall that, following \cite[Variante 5.13, D\'efinition 6.1]{Zint}, a Weil
$\Qlb$-sheaf $\cF$ on $X$ is said to be \emph{$R$-integral} (resp.\
\emph{inverse $R$-integral}) if for all $n\ge 1$ and all $x\in X(\F_{q^n})$,
the eigenvalues (resp.\ inverse eigenvalues) of $\Fr_x$ acting on $\cF_{\bar
x}$ are in $R$.

\begin{proof}
Up to replacing $X$ by a smooth presentation, we may assume that $X$ is a
scheme. Up to replacing $X$ by its normalization, we may further assume that
$X$ is normal. In this case, $\cF\simeq j_*j^*\cF$ is $R$-integral (resp.\
inverse $R$-integral) by \cite[Th\'eor\`eme 2.2, Variantes 5.1, 5.13]{Zint}.
(The integral case is a theorem of Deligne \cite[XXI Th\'eor\`eme
5.6]{SGA7II} assuming resolution of singularities.)
\end{proof}

\begin{remark}\label{r.shrink}
Let $I=[a,b]\cap \Q$ be an interval with $a,b\in \Q$. It follows from the
propositions that if $\cF$ is a lisse $\Qlb$-sheaf on a stack $X$ such that
the eigenvalues of $\Fr_x$ acting on $\cF_{\bar x}$ belong to $W^I_0(q^n)$
for all $n\ge 1$ and all $x\in U(\F_{q^n})$, where $U$ is some dense open
substack of $X$, then the same holds for all $x\in X(\F_{q^n})$. Indeed, the
eigenvalues belong to $W_0(q)$ by Proposition \ref{p.eigen}, and
$\cF^{(q^{-a})}$ is $\Zb$-integral and $\cF^{(q^{-b})}$ is inverse
$\Zb$-integral (for all representatives of $q^{-a}$ and $q^{-b}$) by
Proposition \ref{p.int}. Here $\Zb$ denotes the ring of algebraic integers.
\end{remark}

\begin{proof}[Proof of Theorem \ref{t.2} (1)]
By Remark \ref{r.shrink}, we may shrink $X$. Thus, by Lemma \ref{l.Behrend},
we are reduced to the case of Deligne-Mumford stacks. Up to shrinking $X$,
we may assume that there exists a finite \'etale cover $f\colon Y\to X$,
where $Y$ is a scheme. By Lemma \ref{l.finite}, $f^*\cF\simeq \bigoplus_i
\cF_i$, with $\cF_i$ simple and $\det(\cF_i)$ of finite order. We are thus
reduced to the case where $X$ is a scheme. This case was stated in
\cite[Proposition VII.7]{Lafforgue}, and the gap in the proof has been fixed
by Deligne \cite[Th\'eor\`eme 1.6]{Deligne} and others. Indeed, by Remark
\ref{r.shrink} again, we may assume that $X$ is a smooth separated scheme.
By a consequence of Hilbert irreducibility (\cite[Proposition 2.17]{Dr} or
\cite[Proposition B.1]{EK}), for any closed point $x$ of $X$, there exists a
smooth curve $C$ over $\F_q$ and a morphism $g\colon C\to X$ such that $x$
is in the image of $g$ and $g^*\cF$ is simple. It then suffices to apply
L.~Lafforgue's theorem for curves \cite[Th\'eor\`eme VII.6]{Lafforgue} and
V.~Lafforgue's improvement of the bound \cite[Corollaire 2.2]{VLaff}.
\end{proof}

More generally V.~Lafforgue proved an inequality for the Newton polygon in
the case of curves. Recently Drinfeld and Kedlaya \cite[Theorem 1.3.3]{DK}
gave a refinement for the lowest Newton polygon in the case of smooth
schemes. These results extend to normal stacks as follows.

For a stack $X$, we let $\lvert X(\Fqb)\rvert$ denote the set of isomorphism
classes of the groupoid $X(\Fqb)$. We let $\lvert X\rvert$ denote the set of
orbits of $\Gal(\Fqb/\F_q)$ acting on $\lvert X(\Fqb)\rvert$. If $X$ is a
Deligne-Mumford stack, then $\lvert X\rvert$ can be identified with the set
of closed points of $X$. In general, following \cite[Lemma 5.3.4]{DK}, we
equip $\lvert X\rvert$ with the following topology $T$: a subset $U\subseteq
\lvert X\rvert$ is $T$-open if and only if for every morphism $C\to X$ from
a smooth curve $C$ to $X$, the inverse image of $U$ under the map $\lvert
C\rvert \to \lvert X\rvert$ is open for the Zariski topology on $\lvert C
\rvert$.

We say that a Weil $\Qlb$-sheaf $\cF$ on $X$ is \emph{algebraic} if it is
$\Qb$-integral. We fix a valuation $v$ on $\overline{\Q}$ such that
$v(q)=1$. For an algebraic Weil $\Qlb$-sheaf $\cF$ on $X$ and $x\in
X(\F_{q^n})$, we let $s_1^x(\cF)\le \dots \le s_r^x(\cF)$ denote the images
under $v/n$ of the eigenvalues of $\Fr_x$ acting on $\cF_{\bar x}$. These
rational numbers are called the \emph{slopes} of $\cF$ at $x$ and depend on
$x$ only through the image of $x$ in $\lvert X\rvert$.

\begin{theorem}\label{t.slopes}
Let $X$ be an irreducible stack. Let $\cF$ be an algebraic lisse Weil
$\Qlb$-sheaf of rank $r$ on $X$. Then
\begin{enumerate}
\item There exist rational numbers $s_1(\cF)\le \dots \le s_r(\cF)$ such
    that $\sum_{j=1}^i s_j(\cF)\le \sum_{j=1}^i s_j^x(\cF)$ for all $x$
    and all $i$ and the set $Y\subseteq \lvert X\rvert$ of $y$ satisfying
    $s_i^y(\cF)=s_i(\cF)$ for all $i$ is nonempty and $T$-open.
\item If $X$ is geometrically unibranch and $\cF$ is indecomposable, then
    $s_{i+1}(\cF)\le s_i(\cF)+1$ for all $1\le i\le r-1$.
\end{enumerate}
\end{theorem}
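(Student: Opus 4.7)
The plan is to reduce to the case of smooth schemes via the alternative approach described in Remark~(ii) after Lemma~\ref{l.finite}: shrink $X$ and choose a smooth presentation with geometrically connected fibers, then invoke the smooth-scheme case of Drinfeld--Kedlaya \cite[Theorem 1.3.3]{DK}. To pass from a dense open $U\subseteq X$ back to all of $X$, I will use the joining-by-curves argument of Proposition~\ref{p.eigen}.

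More concretely, after replacing $X$ by the smooth locus of $X^\red$, which is a dense open substack since $\F_q$ is perfect, I apply Théorème~6.5 of LMB together with Lemma~\ref{l.sep} to find a dense open $U\subseteq X$ equipped with a smooth surjective presentation $g\colon Z\to U$, where $Z$ is a smooth separated scheme over $\F_q$ and $g$ has geometrically connected fibers; since $g$ is smooth these fibers are in fact geometrically integral, and $Z$ itself is integral. By Lemma~\ref{l.subm}, $g$ induces a surjection on Weil groups, so $g^*\cF$ has the same rank as $\cF$, is indecomposable whenever $\cF$ is, and is still $\overline\Q$-integral. For any $x\in U(\F_{q^n})$, the fiber $Z_x$ is smooth and nonempty, hence admits a closed point $z$; extending the base field from $\F_{q^n}$ to the residue field of $z$ gives a lift of $x$, and the slopes of $\cF$ at $x$ then coincide with those of $g^*\cF$ at $z$.

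Set $s_i(\cF)\colonequals s_i(g^*\cF)$, the generic slopes furnished by Drinfeld--Kedlaya on the smooth scheme $Z$. The partial-sum inequality in~(1) and, when $\cF$ is indecomposable and $X$ is geometrically unibranch, the slope-gap bound in~(2) both transfer immediately from $Z$ to $U$ via the slope identification above. To extend the inequality in~(1) from $U$ to all $x\in X$, I invoke the joining-by-curves argument of Proposition~\ref{p.eigen}: any such $x$ is connected to a point of $U$ by a chain of smooth curves $C_1,\dots,C_N$ over $\F_q$ mapping to $X$ (Lemma~\ref{l.curve}), and Drinfeld--Kedlaya applied on each $C_j$ yields semicontinuity of the Newton polygon, giving $\sum_{j=1}^i s_j(\cF)\le\sum_{j=1}^i s_j^x(\cF)$ at the endpoint $x$.

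For the $T$-openness of $Y\subseteq|X|$, given any smooth curve $h\colon C\to X$, the pullback $h^*\cF$ on the irreducible smooth curve $C$ admits a generic Newton polygon $s^C_\bullet$ with Zariski-open locus of attainment (Drinfeld--Kedlaya on curves). Since slopes of $h^*\cF$ at $c\in|C|$ equal those of $\cF$ at $h(c)\in|X|$, one has $s^C_i\ge s_i(\cF)$ for every $i$, and the preimage of $Y$ in $|C|$ is either this Zariski-open locus (when $s^C_i=s_i(\cF)$ for all $i$) or empty; either way it is Zariski open, so $Y$ is $T$-open in $|X|$. Nonemptiness of $Y$ follows from taking the image in $|X|$ of any point of the nonempty $Y_Z\subseteq|Z|$. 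I expect the main technical obstacle to be the juggling of the various shrinkings, and in particular verifying that after passing to the smooth locus of $X^\red$ and taking a smooth presentation, $Z$ is indeed a smooth integral scheme over $\F_q$ so that the Drinfeld--Kedlaya theorem applies in its original form.
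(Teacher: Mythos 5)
Your treatment of part (2) is fine, and in fact the paper explicitly flags your route --- shrink to a dense open with a smooth presentation of geometrically connected fibers and invoke Drinfeld--Kedlaya on the resulting smooth scheme --- in the remark following Lemma~\ref{l.finite}. The paper's own proof of (2) instead reduces via Lemma~\ref{l.Behrend} to a quotient stack $[Y/G]$ by a finite group and then passes to the $\GL_m$-torsor $Y\wedge^G\GL_m$, a smooth affine scheme; both routes end in the smooth-scheme case of \cite[Theorem 1.3.3]{DK}, and Lemma~\ref{l.subm} controls indecomposability in both.

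For part (1), however, your extension step has a genuine gap. The paper proves (1) intrinsically, without reducing to a smooth scheme: Lemma~\ref{l.semicon} shows each partial-sum function $a_i(x)=\sum_{j\le i}s_j^x(\cF)$ is upper $T$-semicontinuous, bounded, and discrete-valued, hence attains its minimum on a $T$-open $Y_i$, and $T$-irreducibility of $\lvert X\rvert$ (Lemma~\ref{l.irr}) makes $Y=\bigcap_i Y_i$ nonempty. You instead define $s_i(\cF)$ as the generic slopes on the smooth scheme $Z$ covering a dense open $U$, and attempt to extend the bound $a_i\ge m_i\colonequals\sum_{j\le i}s_j(\cF)$ from $\lvert U\rvert$ to $\lvert X\rvert$ by joining-by-curves. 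But the chain produced in Proposition~\ref{p.eigen} may contain curves $C_j$ whose image in $X$ never meets $U$; on such a $C_j$, Drinfeld--Kedlaya only bounds $a_i$ at endpoints from below by the \emph{generic} value $n_i^{(j)}$ of $a_i$ along $C_j$, and nothing forces $n_i^{(j)}\ge m_i$. The chain therefore does not propagate the inequality, and your nonemptiness claim for $Y$ inherits the same gap. The repair is exactly the paper's argument: $\{x:a_i(x)<m_i\}$ is $T$-open by semicontinuity on curves, so by $T$-irreducibility (Lemma~\ref{l.irr}) it would meet the $T$-open $\lvert U\rvert$ if nonempty, contradicting the definition of $m_i$. (Alternatively, one could strengthen Lemma~\ref{l.curve} to produce a single curve through $x$ whose generic point lands in $U$.) A minor further slip in your $T$-openness step: $s^C_i\ge s_i(\cF)$ slope-by-slope does not follow from the partial-sum comparisons; only $\sum_{j\le i}s^C_j\ge\sum_{j\le i}s_j(\cF)$ does. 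Your conclusion survives nonetheless, since any point of $h^{-1}(Y)$ forces the generic Newton polygon of $h^*\cF$ on $C$ to coincide with $(s_1(\cF),\dots,s_r(\cF))$, after which Zariski-openness of $h^{-1}(Y)$ follows from Drinfeld--Kedlaya on $C$.
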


The numbers $s_i(\cF)$ are the slopes of the lowest Newton polygon.

\begin{remark}
In (2), if moreover $\det(\cF)$ has finite order, so that $\sum_{j=1}^r
s_j^x(\cF)=0$, then, as in \cite[Proof of Corollary 1.1.7]{DK}, the theorem
implies
\[\sum_{j=1}^i
s_j^x(\cF)\ge \sum_{j=1}^i
s_j(\cF)\ge -i(r-i)/2
\]
for all $x$ and all $i$. Taking $i=1$ and $i=r-1$, we recover the bounds
$s_1^x(\cF)\ge -(r-1)/2$ and $s_{r}^x(\cF)\le (r-1)/2$ in Theorem \ref{t.2}
(1).
\end{remark}

To prove the theorem, we need a couple of lemmas, extending \cite[Lemmas
5.3.1, 5.3.3, 5.3.4]{DK}.

\begin{lemma}\label{l.semicon}
Let $X$ be a stack and let $\cF$ be an algebraic lisse Weil $\Qlb$-sheaf on
$X$. For all $i$, the function $x\mapsto \sum_{j=1}^i s_j^x(\cF)$ on $\lvert
X\rvert$ is upper semi-continuous for the topology $T$, bounded, and takes
values in $N^{-1}\Z$ for some $N$.
\end{lemma}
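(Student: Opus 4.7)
The plan is to treat the three assertions separately, reducing upper semi-continuity to curves and the numerical bounds to a scheme presentation.

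For the semi-continuity, by construction $T$ is the coarsest topology on $\lvert X\rvert$ making every $\lvert C\rvert\to \lvert X\rvert$ continuous (with $\lvert C\rvert$ carrying the Zariski topology) for $C\to X$ a morphism from a smooth curve. A function is therefore $T$-upper semi-continuous if and only if its pullback along each such map is Zariski upper semi-continuous on $\lvert C\rvert$. Fix $f\colon C\to X$. Then $f^*\cF$ is again lisse and $\overline{\Q}$-integral, and for $y\in C(\F_{q^N})$ mapping to $x=f(y)\in X(\F_{q^n})$ (so $n\mid N$), the action of $\Fr_y$ on $(f^*\cF)_{\bar y}=\cF_{\bar x}$ coincides with $\Fr_x^{N/n}$, whence $s^y_j(f^*\cF)=v(\alpha^{N/n})/N=v(\alpha)/n=s^{f(y)}_j(\cF)$. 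The pullback of our function on $\lvert X\rvert$ thus equals the analogous function on $\lvert C\rvert$ attached to $f^*\cF$, whose Zariski upper semi-continuity is the curve case of \cite[Lemma~5.3.4]{DK}.

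For boundedness and $N^{-1}\Z$-valuedness, I would choose a smooth presentation $p\colon Y\to X$ with $Y$ a scheme of finite type over $\F_q$. Given $x\in X(\F_{q^n})$, the fiber $Y\times_X\Spec(\F_{q^n})$ is a smooth $\F_{q^n}$-scheme, nonempty by surjectivity of $p$, hence admits a closed point $y$ whose residue field is some $\F_{q^{nm}}$. The same slope-matching identity gives $s^y_j(p^*\cF)=s^x_j(\cF)$, so the image of $x\mapsto \sum_{j=1}^i s^x_j(\cF)$ on $\lvert X\rvert$ is contained in the corresponding image on $\lvert Y\rvert$, and both assertions descend from the scheme case \cite[Lemmas~5.3.1, 5.3.3]{DK} applied to $p^*\cF$ on $Y$. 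Boundedness alone also follows directly from Proposition \ref{p.eigen} by joining $x$ to a fixed base point through a chain of curves.

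The only real obstacle is the bookkeeping for the slope-matching identity, namely that if $y$ lies over $x$ with relative residue degree $m$, then the eigenvalues of $\Fr_y$ on the common stalk are the $m$-th powers of those of $\Fr_x$, so that the normalised quantities $v(\alpha)/\deg$ are preserved under pullback. Once this is granted, together with the scheme and curve cases of \cite{DK}, no additional input is required.
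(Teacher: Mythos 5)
Your proposal is essentially correct, and the overall strategy of treating the three assertions separately — reducing semi-continuity to the curve case via the definition of the $T$-topology, and the numerical assertions (boundedness and $N^{-1}\Z$-valuedness) to a scheme — matches the paper's approach closely. The slope-matching bookkeeping you flag is indeed the relevant point, and your treatment of it is correct.

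That said, the paper takes a slightly different route on two of the three points, worth noting. For semi-continuity, the paper does not directly cite a ready-made statement of \cite{DK}: after reducing to a smooth curve $C$, it further reduces to $\cF$ simple and then to $\det(\cF)$ of finite order (twisting shifts all slopes uniformly, so this is harmless), and then invokes Abe's theorem on crystalline companions together with the corresponding fact for overconvergent $F$-isocrystals. Your outsourcing of this to ``the curve case of \cite[Lemma~5.3.4]{DK}'' is fine in substance, but note that Lemma 5.3.4 of \cite{DK} is the one the paper cites only for the \emph{definition} of the topology $T$; the semi-continuity content lives elsewhere, so the citation number is off. For the $N^{-1}\Z$-valuedness, your route — pass to a smooth presentation $p\colon Y\to X$ with $Y$ a scheme, use the slope-matching identity to show the image of the slope function on $\lvert X\rvert$ sits inside its image on $\lvert Y\rvert$, and invoke \cite[Lemma~5.3.1]{DK} for $p^*\cF$ on $Y$ — is genuinely different from the paper's. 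The paper instead invokes the stack-level finiteness Theorem \ref{t.gen} ($E(\cF)$ is a number field on an Artin stack) and reruns DK's proof directly on $X$. Your reduction has the advantage of only needing Deligne's original scheme-level finiteness (which DK already builds into Lemma~5.3.1), at the cost of an extra presentation-and-descent step; the paper's version applies DK's argument in place, using its own Theorem \ref{t.gen}. For boundedness, both routes you mention — descending from $Y$, or invoking Proposition \ref{p.eigen} directly — are valid, and the latter is what the paper actually uses.
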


\begin{proof}
By the definition of the topology $T$, for the semi-continuity we may assume
that $X$ is a smooth curve. We reduce then to the case $\cF$ simple, and
then to the case where $\det(\cF)$ has finite order. In this case, the
semi-continuity follows from Abe's theorem on crystalline companions
\cite[Theorem 4.4.1]{Abe} and the corresponding statement for overconvergent
$F$-isocrystals. The boundedness follows from Proposition \ref{p.eigen}. The
last assertion follows from the fact that $E(\cF)$ is a number field
(Theorem \ref{t.gen}) by the proof of \cite[Lemma 5.3.1]{DK}.
\end{proof}

\begin{lemma}\label{l.irr}
Let $X$ be an irreducible stack. Then $\lvert X\rvert$ is irreducible for
the topology $T$.
\end{lemma}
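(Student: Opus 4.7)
The plan is to reduce to the case of an irreducible affine scheme via a smooth atlas, prove $T$-irreducibility there directly using Lemma \ref{l.curve}, and then transport the result back first along a surjection $\lvert Y_0\rvert\to \lvert X_0\rvert$ onto a dense open substack, and then along the $T$-dense inclusion $\lvert X_0\rvert\hookrightarrow \lvert X\rvert$.

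I would first record the general fact that any morphism $g\colon Z\to W$ of stacks induces a map $\lvert g\rvert\colon \lvert Z\rvert\to \lvert W\rvert$ continuous for the $T$-topology: if $U\subseteq \lvert W\rvert$ is $T$-open, then for any smooth curve $C\to Z$ the composition $C\to W$ is again a smooth curve, so $U\cap \lvert C\rvert$ is Zariski open, showing $\lvert g\rvert^{-1}(U)$ is $T$-open. Next, given an affine smooth presentation $f\colon Y\to X$, I would pick a reduced irreducible component $Y_0\subseteq Y$ (necessarily affine, hence separated) and set $X_0:=f(Y_0)$; since $f$ is open and $X$ is irreducible, $X_0$ is a dense open substack of $X$, and $f\colon Y_0\to X_0$ is smooth and surjective. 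To see $\lvert Y_0\rvert$ is $T$-irreducible, I would take nonempty $T$-opens $U_1,U_2\subseteq \lvert Y_0\rvert$ with chosen closed points $p_i\in U_i$, apply Lemma \ref{l.curve} to $Y_0$ to obtain an irreducible regular (hence smooth, as $\F_q$ is perfect) curve $C$ and a morphism $C\to Y_0$ whose image contains $p_1$ and $p_2$, and observe that $U_i\cap \lvert C\rvert$ are nonempty Zariski opens of the irreducible $\lvert C\rvert$, so they must intersect. Since the continuous surjective image of an irreducible space is irreducible, $\lvert X_0\rvert$ is $T$-irreducible as well.

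To descend from $\lvert X_0\rvert$ to $\lvert X\rvert$, the key step is to show that $\lvert X_0\rvert$ is $T$-dense in $\lvert X\rvert$. For any $p\in \lvert X\rvert$ and any $T$-open $U\ni p$, I would lift $p$ to a closed point $y$ on some component $Y_j$ of $Y$; the image $X_j:=f(Y_j)$ is another dense open substack, so $X_j\cap X_0$ contains a closed point $x$ with a lift $y'\in Y_j$. Applying Lemma \ref{l.curve} inside $Y_j$ yields an irreducible curve $C\to Y_j$ through $y$ and $y'$, and then the inverse images $U\cap \lvert C\rvert$ and $\lvert X_0\rvert\cap \lvert C\rvert$ in $\lvert C\rvert$ are nonempty Zariski opens of the irreducible $\lvert C\rvert$, hence meet, producing a point of $U\cap \lvert X_0\rvert$. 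With density in hand, any covering $\lvert X\rvert=F_1\cup F_2$ by proper $T$-closed subsets restricts to a covering $\lvert X_0\rvert=\bigcup_i (F_i\cap \lvert X_0\rvert)$ in which each $F_i\cap \lvert X_0\rvert$ is a proper $T$-closed subset of $\lvert X_0\rvert$ (properness by density), contradicting the $T$-irreducibility of $\lvert X_0\rvert$.

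The main technical obstacle is that a smooth atlas of an irreducible stack is typically not itself irreducible; only a single irreducible component $Y_0$ is directly usable for curve-joining, and its image $X_0$ is merely a dense open substack of $X$. This forces the two-layered argument above: the scheme-level step carried out inside $Y_0$, and a separate density argument that invokes Lemma \ref{l.curve} on other components $Y_j$ to reach every point of $\lvert X\rvert$ from $\lvert X_0\rvert$ via a smooth curve.
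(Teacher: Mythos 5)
Your strategy differs from the paper's in two ways, and the second introduces a genuine gap. The paper first replaces $X$ by its normalization (via the observation that a surjection from an irreducible $T$-space preserves $T$-irreducibility), which forces the smooth presentation $Y$ to be normal; its connected components $Y_i$ are then irreducible \emph{and Zariski open}, so $(f(Y_i))$ is a Zariski open cover of $X$ by sets that are $T$-irreducible (by the same curve-joining you use), and the conclusion follows from the standard criterion \cite[0 2.1.4]{EGAI} for a space covered by irreducible opens with pairwise nonempty intersections. You instead work with the original presentation, keep one irreducible component $Y_0$, and argue via $T$-density of $\lvert f(Y_0)\rvert$. Both routes rest on Lemma \ref{l.curve} plus $T$-continuity of $\lvert g\rvert$, and your density argument is a perfectly legitimate alternative to the covering/EGA step.

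The gap is in the assertion ``since $f$ is open and $X$ is irreducible, $X_0:=f(Y_0)$ is a dense open substack of $X$.'' An irreducible component $Y_0$ of $Y$ is a \emph{closed} subscheme, not an open one, so openness of $f$ does not give openness of $f(Y_0)$; by Chevalley one only knows $f(Y_0)$ is constructible (and dense, since flatness of $f$ forces every component of $Y$ to dominate $X$). Without $X_0$ open, $X_0$ is not a substack, the map $\lvert Y_0\rvert\to\lvert X_0\rvert$ is only continuous for the subspace topology, and—more importantly for your density step—there is no reason the preimage of $\lvert X_0\rvert$ in $\lvert C\rvert$ should be Zariski open, so ``nonempty Zariski opens of the irreducible $\lvert C\rvert$ must meet'' does not apply. (The subsidiary claim that $Y_0\to X_0$ is smooth is also false in general—restriction of a smooth morphism to a closed subscheme is rarely smooth—but you never actually use it.) The fix is either to normalize $X$ first as the paper does, which makes the components of $Y$ open, or to replace $Y_0$ throughout by its open dense locus $Y_0^\circ := Y_0\setminus\bigcup_{j\neq 0}Y_j$ and $X_0$ by $f(Y_0^\circ)$, which genuinely is a dense open substack; your curve-joining and density arguments then go through verbatim, with $y'$ chosen in $Y_j^\circ\cap f^{-1}\bigl(f(Y_0^\circ)\bigr)$, which is nonempty because both $f(Y_0^\circ)$ and $f(Y_j^\circ)$ are dense open in the irreducible $X$.
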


\begin{proof}
If $f\colon Y\to X$ is a surjective morphism of stacks with $Y$ irreducible,
then $\lvert f \rvert$ is a surjection and we may replace $X$ by $Y$. Thus,
replacing $X$ by its normalization, we may assume $X$ is normal. Next note
that if $X$ admits a Zariski open cover $(X_i)$ such that $\lvert X_i\rvert$
is $T$-irreducible for all $i$, then $X$ is $T$-irreducible. Indeed, $\lvert
X_i\rvert \cap \lvert X_j\rvert \neq \emptyset$ and \cite[0 2.1.4]{EGAI}
applies. Let $f\colon Y \to X$ be a flat presentation with $Y$ a separated
scheme. Each connected component $Y_i$ of $Y$ is irreducible, and $(f(Y_i))$
is a Zariski open cover of $X$. We may thus replace $X$ by $Y_i$, and assume
that $X$ is a separated scheme. Let $U_1,U_2\subseteq \lvert X\rvert$ be
nonempty $T$-open subsets. Let $x_i\in U_i$. By Lemma \ref{l.curve}, there
exists a morphism $g\colon C\to X$, where $C$ is an irreducible smooth
curve, such that $x_1$ and $x_2$ are in the image of $\lvert g\rvert$. Then
$\lvert g\rvert^{-1}(U_i)$ is nonempty for $i=1,2$. It follows that $\lvert
g\rvert^{-1}(U_1\cap U_2)$ and hence $U_1\cap U_2$ are nonempty.
\end{proof}

\begin{proof}[Proof of Theorem \ref{t.slopes}]
(1) By Lemma \ref{l.semicon}, the function $a_i\colon x\mapsto
    \sum_{j=1}^i
    s_j^x(\cF)$ on $\lvert X\rvert$ attains a minimum. We define $s_1\le \dots \le s_r$ so that the minimum of $a_i$ is $\sum_{j=1}^i {s_j}$. Moreover, the locus $Y_i\subseteq \lvert X\rvert$
    on which $a_i$ attains the minimum is $T$-open. Therefore,
    $Y=\bigcap_{i=1}^{r-1}
    Y_i$ is nonempty and $T$-open by Lemma \ref{l.irr}.

(2) Since $\lvert X\rvert$ is irreducible, we may shrink $X$. Thus, by Lemma
\ref{l.Behrend}, we may assume that $X$ is a Deligne-Mumford stack. Further
shrinking $X$, we may assume that $X=[Y/G]$ is the quotient stack of a
smooth affine scheme $Y$ by a finite group $G$. Choose an embedding $G\to
\GL_m$. Then $Y\wedge^G \GL_m =(Y\times \GL_m)/G$ is a $\GL_m$-torsor over
$[Y/G]$. By Lemma \ref{l.subm}, we may replace $X$ by the smooth affine
scheme $Y\wedge^G \GL_m$. This case is \cite[Theorem 1.3.3]{DK}.
\end{proof}

Let $\iota\colon \Qlb\to \C$ be an embedding.  Following \cite[2.4.3]{SunL},
we say that a Weil $\Qlb$-sheaf $\cF$ on a stack $X$ is \emph{punctually
$\iota$-pure of weight $w\in \R$} if for every $x\in X(\F_{q^n})$, $n\ge 1$
and every eigenvalue $\alpha$ of $\Fr_x$ acting on $\cF_{\bar x}$, we have
$\lvert \iota\alpha\rvert =q^{wn/2}$. The results of Sun in \cite{SunL} and
\cite{Sun} extend to stacks not necessarily of separated diagonal by Lemma
\ref{l.sep} and to Weil $\Qlb$-sheaves by Proposition \ref{p.Weil}. For
$w\in \Z$, we say that $\cF$ is \emph{punctually pure of weight $w$} if it
is punctually $\iota$-pure of weight $w$ for all $\iota$.

\begin{remark}\label{r.mixed}
By Theorem \ref{t.2} (1) and Proposition \ref{p.twist}, every simple lisse
Weil $\Qlb$-sheaf on a geometrically unibranch stack is punctually
$\iota$-pure. It follows that every Weil $\Qlb$-sheaf on a stack is
$\iota$-mixed, namely, a successive extension of punctually $\iota$-pure
sheaves (cf.\ \cite[Remark 2.8.1]{SunL}). Similarly, a Weil $\Qlb$-sheaf
$\cF$ on a stack is mixed, namely, a successive extension of punctually pure
sheaves (of integral weights), if and only if for every $x\in X(\F_{q^n})$,
$n\ge 1$, the eigenvalues of $\Fr_x$ acting on $\cF_{\bar x}$ belong to
$M(q^n)$. Here $M(q)\colonequals \bigcup_{w\in \Z} q^{w/2}M_0$. (Recall that
$M_0$ is the group of algebraic numbers of weight $0$.)
\end{remark}

The structure of punctually $\iota$-pure Weil $\Qlb$-sheaves can be
described as follows. We let $\cE_n$ denote the $\Qlb$-sheaf on
$\Spec(\F_q)$ of stalk $\Qlb^n$ on which $\Fr_q$ acts unipotently with one
Jordan block.

\begin{prop}\label{p.purelisse}
Let $X$ be a geometrically unibranch stack. Then indecomposable punctually
$\iota$-pure lisse Weil $\Qlb$-sheaves are of the form $\cF\otimes
\pi_X^*\cE_n$ with $\cF$ simple, where $\pi_X\colon X\to \Spec(\F_q)$.
\end{prop}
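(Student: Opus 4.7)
The plan is to realize $\cG$ as a Weil-group representation and use Clifford theory plus the classification of indecomposable representations of $\Z$. Reducing to $X$ connected (an indecomposable lisse sheaf is concentrated on a single connected component), Remark \ref{r.unib} lets us identify $\cG$ with a finite-dimensional continuous $\Qlb$-representation $V$ of $G=W(X,\bar x)$, sitting in the extension $1\to G^\circ\to G\to W(\Fqb/\F_q)\simeq\Z\to 1$ with $G^\circ=\pi_1(X_{\Fqb},\bar x)$. Each Weil-sheaf composition factor of $\cG$ is a simple lisse Weil $\Qlb$-sheaf punctually $\iota$-pure of weight $w$ (the Frobenius eigenvalues of $\cG$ at each $\F_{q^n}$-point are, with multiplicity, the union of those of the composition factors), and hence has semisimple $G^\circ$-restriction by the extension of Deligne's geometric semisimplicity theorem for pure lisse sheaves to Artin stacks (Sun \cite{SunL} via Proposition \ref{p.Weil} and Lemma \ref{l.sep}). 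Moreover, for pure simples $\cF_i,\cF_j$ the sheaf $\cF_i^\vee\otimes\cF_j$ is geometrically pure of weight $0$, so $\Ext^1_{G^\circ}(\cF_i,\cF_j)\simeq H^1(X_{\Fqb},\cF_i^\vee\otimes\cF_j)$ has Frobenius weights $\ge 1$ by Deligne; the image of $\Ext^1_G\to \Ext^1_{G^\circ}$, which lies in the $\Fr_q$-invariants by Remark \ref{r.ext}(1), therefore vanishes. Iterating, $V|_{G^\circ}$ is semisimple.

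Next, decompose $V|_{G^\circ}=\bigoplus_i V_i$ into $G^\circ$-isotypic components, indexed by simple $G^\circ$-constituents $W_i$. The Frobenius $\Fr_q$ permutes the $V_i$, and each $\Fr_q$-orbit contributes a $G$-stable summand of $V$; indecomposability of $\cG$ forces a single orbit of some length $r$. The stabilizer $G_1\subset G$ of one $V_i$ is an open subgroup of index $r$ containing $G^\circ$, corresponding to the finite étale cover $p\colon X'=X\otimes_{\F_q}\F_{q^r}\to X$. Then $V\simeq\mathrm{Ind}_{G_1}^G V_1$, and $V_1$, viewed as a Weil sheaf on $X'$, is pure, indecomposable, and geometrically isotypic.

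It remains to treat the geometrically isotypic case and reassemble. Assume $V|_{G^\circ}\simeq W^n$ with $W$ a simple $G^\circ$-rep. Since $H^2(\Z,\Qlb^\times)=0$, the projective $G$-action on $W$ lifts to a genuine $G$-representation, producing a simple Weil $\Qlb$-sheaf $\tilde\cF$ on $X$ with $\tilde\cF_{\Fqb}\simeq W$. Setting $M:=\Hom_{G^\circ}(\tilde\cF,V)$, an $n$-dimensional $\Z$-representation, the evaluation map yields a $G$-equivariant isomorphism $V\simeq\tilde\cF\otimes_{\Qlb} M$. By Schur, $V$ is indecomposable iff $M$ is, so $M\simeq\cE_n\otimes\Qlb^{(a)}$ (a single Jordan block); absorbing $\Qlb^{(a)}$ into $\tilde\cF$ yields a simple $\cF$ with $\cG\simeq\cF\otimes\pi_X^*\cE_n$. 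In the general setting of the previous paragraph, apply this on $X'$ to write $V_1\simeq\cF_1\otimes\pi_{X'}^*\cE_n$; the projection formula for the finite étale $p$ then gives $\cG\simeq(p_*\cF_1)\otimes\pi_X^*\cE_n$, and $p_*\cF_1$ is simple by Mackey's irreducibility criterion since its $G/G_1$-conjugates restrict to the pairwise distinct $W_i$. The main technical obstacle is the geometric semisimplicity in the first paragraph, which relies on the stack-version of Deligne's purity theorems used elsewhere in the paper; the remainder is a routine Clifford/Mackey argument together with the classification of indecomposable $\Z$-representations over $\Qlb$ as single Jordan blocks.
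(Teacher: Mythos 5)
Your proof is correct and takes essentially the same approach as the paper: the paper's proof is a two-line citation of the geometric semisimplicity of punctually $\iota$-pure lisse Weil sheaves on stacks \cite[Theorem 2.1 (iii)]{Sun} together with the categorical argument of \cite[Proposition 5.3.9 (i)]{BBD}, and your proposal simply unpacks both citations (the Clifford--Mackey analysis being exactly the content of the BBD proposition). One small remark: Sun's theorem already gives geometric semisimplicity of the full sheaf $\cG$, not merely of its simple Weil-sheaf constituents, so your Ext-vanishing paragraph is an optional detour (and the reference there should in any case be to \cite{Sun} rather than \cite{SunL}; moreover the weight lower bound for $H^1$ of a possibly non-smooth unibranch stack quoted ``by Deligne'' deserves a word of justification, e.g.\ by passing to a smooth dense open via Leray or by invoking \cite{SunL}).
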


In the appendix we will prove an analogue for pure perverse sheaves. The
proposition still holds with $\Qlb$ replaced by a finite (or algebraic)
extension of $\Q_\ell$.

\begin{proof}
As in the case of perverse sheaves on schemes \cite[Proposition 5.3.9
(i)]{BBD}, this follows from the geometric semisimplicity of punctually
$\iota$-pure lisse Weil $\Qlb$-sheaves \cite[Theorem 2.1 (iii)]{Sun}
(generalizing \cite[Th\'eor\`eme 3.4.1 (iii)]{WeilII}).
\end{proof}

Let $W(q)=\bigcup_{w\in \Z} q^{w/2}W_0(q)$ be the group of $q$-Weil numbers
(of integral weights). We say that $K\in D(X,\Qlb)$ is \emph{weakly motivic}
if for all $n\ge 1$, $x\in X(\F_{q^n})$, and $i\in\Z$, the eigenvalues of
$\Fr_x$ acting on $\cH^i K_{\bar{x}}$ belong to $W(q^n)$. We let
$D_{\mot}(X,\Qlb)\subseteq D(X,\Qlb)$ denote the full subcategory spanned by
weakly motivic complexes, which is a thick subcategory. For $*\in
\{+,-,b\}$, we put $D^*_\mot=D^*\cap D_\mot$.

\begin{remark}
By Proposition \ref{p.eigen}, for a lisse $\Qlb$-sheaf $\cF$ on a connected
stack $X$ and a fixed $x\in X(\F_{q^{n}})$, $\cF$ is weakly motivic if and
only if the eigenvalues of $\Fr_{x}$ acting on $\cF_{\bar {x}}$ are in
$W(q^{n})$.
\end{remark}

The following result generalizes \cite[Theorems B.3, B.4]{Dr}.

\begin{theorem}\label{t.mot}
Let $f$ be a morphism of stacks. The six operations and Grothendieck-Verdier
duality induce
\begin{gather*}
\otimes \colon D^-_{\mot}(X,\Qlb)\times D^-_{\mot}(X,\Qlb)\to D^-_{\mot}(X,\Qlb),\\
R\cHom\colon D^-_{\mot}(X,\Qlb)^{\op}\times D^+_{\mot}(X,\Qlb)\to D^+_{\mot}(X,\Qlb),\\
D\colon D_{\mot}(X,\Qlb)^{\op}\to D_{\mot}(X,\Qlb),\quad f^*, f^!\colon D_{\mot}(Y,\Qlb)\to D_{\mot}(X,\Qlb),\\
f_*\colon D^+_{\mot}(X,\Qlb)\to D^+_{\mot}(Y,\Qlb),\quad f_!\colon D^-_{\mot}(X,\Qlb)\to D^-_{\mot}(Y,\Qlb).
\end{gather*}
If $f$ is relatively Deligne-Mumford, then we also have
\[f_*\colon D_{\mot}(X,\Qlb)\to D_{\mot}(Y,\Qlb),\quad f_!\colon D_{\mot}(X,\Qlb)\to D_{\mot}(Y,\Qlb).
\]
\end{theorem}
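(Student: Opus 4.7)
The plan is to reduce every assertion to the case of schemes, where these stability results are Drinfeld's \cite[Theorems B.3, B.4]{Dr}, via smooth descent from a stack to a scheme presentation. The structural facts we exploit are that $W(q^n)$ is a multiplicatively closed subgroup of $\Qlb^\times$, stable under formation of subquotients and extensions of Frobenius modules, and that weak motivicity on a stack $X$ can be tested after pullback along any smooth surjection $p\colon U\to X$, since geometric stalks pull back through $p$ and every stalk of $K$ arises as a stalk of $p^*K$.

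For $\otimes$ and $f^*$ the conclusion is immediate from the stalk formulas and the multiplicative closure of $W(q^n)$. For Verdier duality $D=R\cHom(-,\omega_X)$ one must verify that $\omega_X=\pi_X^!\Qlb$ is weakly motivic: choosing a smooth presentation $p\colon U\to X$ of relative dimension $d$ by a scheme $U$, we have $p^*\omega_X\simeq \omega_U(-d)[-2d]$, which reduces us to the assertion that $\omega_U$ is weakly motivic on the scheme $U$, and this is the scheme case. Given stability under $D$, the formulas $f^!=Df^*D$ and $R\cHom(A,B)=D(A\otimes DB)$ yield stability under $f^!$ and $R\cHom$ formally.

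The crux is stability under $f_!$. Working locally on the target, a smooth presentation $q\colon V\to Y$ by a scheme and smooth base change reduce the claim to a morphism $X_V\to V$ whose target $V$ is a scheme; checking stalks at points of $V$ further reduces the problem to the following statement: for every stack $Z$ over $\F_{q^n}$ and every $K\in D^-_\mot(Z,\Qlb)$, the compactly supported cohomology over $\Spec\F_{q^n}$ is weakly motivic. Pick a smooth presentation $U\to Z$ with $U$ a separated scheme of finite type and form its \v Cech nerve $U_\bullet$; cohomological descent yields a spectral sequence converging (in $D^-$) to $H^*_c(Z,K)$, whose $E_1$ terms are $H^*_c(U_n,K\res U_n)$, each weakly motivic by Drinfeld's scheme case. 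The abutment inherits weak motivicity by closure of $W(q^n)$ under subquotients and extensions. The dual statement for $f_*$ on $D^+_\mot$ then follows by Verdier duality.

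Finally, if $f$ is relatively Deligne--Mumford, its fibers have bounded cohomological dimension, so $f_!$ and $f_*$ preserve $D^b$; by truncation, the bounded-case conclusions extend to all of $D_\mot$. The main obstacle is precisely the convergence of the descent spectral sequence in the unbounded direction for general Artin stacks, whose stacky cohomology can be nonzero in arbitrarily high degrees; the relative Deligne--Mumford hypothesis is imposed specifically to circumvent this.
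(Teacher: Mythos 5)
Your proposal is correct in broad outline but takes a genuinely different and considerably longer route than the paper's. The paper's proof is a one-paragraph factorization argument: observe that $W(q)=R(q)^\times\cap M(q)$, where $R(q)$ is the integral closure of $\Z[1/q]$ in $\overline{\Q}$, so ``weakly motivic'' is equivalent to the conjunction of ``$R(q)$-integral and inverse $R(q)$-integral cohomology sheaves'' and ``mixed cohomology sheaves''; each of these two properties is already known to be preserved by the six operations and duality on Artin stacks, by \cite[Variante 5.13, Section 6]{Zint} and by \cite[Remark 2.12]{SunL} respectively. Your approach instead reduces each operation directly to the scheme case of \cite[Theorems B.3, B.4]{Dr} by smooth descent, re-proving in the stack setting what those citations already cover. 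Both roads lead to the same place, but the paper's is much shorter precisely because the two halves of the decomposition were already established at the level of stacks.

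Two points in your write-up need tightening. First, for Verdier duality you verify only that $\omega_X$ is weakly motivic and then treat stability of $D$ as established, but $D=R\cHom(-,\omega_X)$, and you derive stability of $R\cHom$ \emph{from} $D$ — a circular dependency. The correct reduction is direct: for a smooth presentation $p\colon U\to X$ of relative dimension $d$, Verdier duality and $p^!\simeq p^*(d)[2d]$ give $p^*D_XA\simeq(D_Up^*A)(-d)[-2d]$, so weak motivicity of $D_XA$ follows from the scheme case applied to $D_Up^*A$ plus invariance under Tate twist and shift, with no detour through $\omega_X$ or $R\cHom$. Second, in your Čech-nerve argument for $f_!$ the descent datum must use $p_n^!K\simeq p_n^*K(d_n)[2d_n]$ rather than $p_n^*K$ (one writes $\id$ as a geometric realization of $p_{\bullet!}p_\bullet^!$, not of $p_{\bullet!}p_\bullet^*$); the twist and shift grow with $n$, which is harmless for weak motivicity but must be tracked for the spectral-sequence convergence in $D^-$. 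You also need to justify that weak motivicity can be tested after pullback along a smooth surjection: an $\F_{q^n}$-point of $X$ need not lift to an $\F_{q^n}$-point of $U$, only to an $\F_{q^{nm}}$-point for some $m\ge1$, and one then uses that $\alpha\in W(q^n)$ if and only if $\alpha^m\in W(q^{nm})$. These are all repairable, but they are exactly the bookkeeping that the paper's factorization argument avoids.
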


\begin{proof}
Note that $W(q)=R(q)^\times\cap M(q)$, where $R(q)$ is the integral closure
of $\Z[1/q]$ in $\overline{\Q}$. By \cite[Variante 5.13, Section 6]{Zint}
(which extends easily to stacks not necessarily of separated diagonals),
complexes with $R$-integral (resp.\ inverse $R$-integral) cohomology sheaves
are preserved by the six operations and duality. By Remark \ref{r.mixed},
having Frobenius eigenvalues in $M(q^n)$ is equivalent to being mixed, and
complexes with mixed cohomology sheaves are preserved by the operations by
\cite[Remark 2.12]{SunL}.
\end{proof}

As in \cite[Stabilit\'es 5.1.7]{BBD}, the theorem has the following
consequence.

\begin{cor}
The perverse truncation functors on $D(X,\Qlb)$ preserve $D_{\mot}(X,\Qlb)$
and induce a $t$-structure on $D_{\mot}(X,\Qlb)$.
\end{cor}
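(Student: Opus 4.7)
The plan is to follow the template of \cite[Stabilit\'es 5.1.7]{BBD}. Since $D_{\mot}(X,\Qlb)$ is a thick triangulated subcategory of $D(X,\Qlb)$, the induced $t$-structure follows formally from the preservation claim, and an induction on perverse amplitude using the triangles $\pH^a K[-a]\to \prescript{p}{}{\tau}_{\ge a}K\to \prescript{p}{}{\tau}_{\ge a+1}K\to$ reduces the preservation to showing $\pH^k K\in D_{\mot}$ for every $K\in D_{\mot}(X,\Qlb)$ and every $k$. I would first record the key closure property: the class of perverse sheaves lying in $D_{\mot}$ is closed under kernels, cokernels, and extensions, since a short exact sequence of perverse sheaves gives a long exact sequence of ordinary cohomology sheaves, and $W(q^n)$ is stable under sub, quotient, and extension of Frobenius modules.

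The main step is a Noetherian induction on $X$. Given $K\in D_{\mot}(X,\Qlb)$, generic flatness \cite[Proposition 2.11]{IZ} furnishes a dense open substack $j\colon U\to X$, which I may assume smooth after further shrinking, such that the ordinary cohomology sheaves of $j^*K$ are lisse; let $i\colon Z\to X$ be the closed complement. The recollement triangle $j_!j^*K\to K\to i_*i^*K\to$ has outer vertices in $D_{\mot}$ by Theorem \ref{t.mot}. Passing to perverse cohomology yields a long exact sequence of perverse sheaves on $X$, and by the closure property it suffices to check $\pH^k(j_!j^*K),\pH^k(i_*i^*K)\in D_{\mot}$ for every $k$.

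For $i_*i^*K$, the $t$-exactness of $i_*$ gives $\pH^k(i_*i^*K)=i_*\pH^k(i^*K)$, which lies in $D_{\mot}$ by the inductive hypothesis applied to $Z$ and Theorem \ref{t.mot}. For $\pH^k(j_!j^*K)$: on the smooth substack $U$ (of dimension $d$ on each connected component), the standard spectral sequence associated to the filtration by ordinary truncations expresses each $\pH^j(j^*K)$ as a subquotient of a single shifted lisse cohomology sheaf of $j^*K$, hence $\pH^j(j^*K)\in D_{\mot}(U,\Qlb)$; applying $j_!$ to the perverse truncation triangles of $j^*K$ together with Theorem \ref{t.mot} then yields $\pH^k(j_!j^*K)\in D_{\mot}$. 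The hardest part will be to carry out the recollement and $t$-exactness formulas carefully for the perverse $t$-structure on a general Artin stack (possibly with non-separated diagonal and non-affine stabilizers); once that framework is admitted, the argument is formal given Theorem \ref{t.mot}.
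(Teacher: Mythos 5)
Your framework is the right one and mirrors BBD \cite[Stabilit\'es 5.1.7]{BBD}: reduce to showing $\pH^k K\in D_\mot$, record that $\Perv\cap D_\mot$ is closed under subquotients and extensions via the long exact sequence of ordinary cohomology sheaves, run a Noetherian induction with a dense smooth open $j\colon U\to X$ on which $j^*K$ has lisse cohomology and closed complement $i\colon Z\to X$, and handle $i_*i^*K$ via the $t$-exactness of $i_*$ and the inductive hypothesis on $Z$. All of that is sound.

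The flaw is in the treatment of $j_!j^*K$. Applying $j_!$ to the perverse Postnikov tower of $j^*K$ gives a triangulated filtration of $j_!j^*K$ whose graded pieces $j_!\pH^j(j^*K)[-j]$ lie in $D_\mot(X)$, but this only re-proves $j_!j^*K\in D_\mot(X)$ (which was already clear from Theorem \ref{t.mot}) and says nothing about $\pH^k(j_!j^*K)$. The functor $j_!$ is only right $t$-exact and does not commute with perverse cohomology, so the filtration you obtain is not a perverse Postnikov tower on $X$. To make your argument run you would need $\pH^m(j_!P)\in D_\mot(X)$ for every perverse $P\in D_\mot(U)$; but for $m<0$ the perverse sheaves $\pH^m(j_!P)$ are supported on $Z$, and nothing in your argument constrains their Frobenius eigenvalues there --- this is essentially the statement you set out to prove, not an input to it.

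The fix, which is what BBD do, is to bypass the triangle $j_!j^*K\to K\to i_*i^*K$ and the long exact sequence of $\pH^k$, and instead use the explicit gluing construction of the perverse truncation functors \cite[1.4.13]{BBD}: $\prescript{p}{}{\tau}_{\le 0}K$ is assembled, by two successive cones, out of $j_!\prescript{p}{}{\tau}^{U}_{\le 0}j^*K$ and an $i_*\prescript{p}{}{\tau}^{Z}_{\le 0}i^!(\cdot)$. On the smooth $U$ where $j^*K$ is lisse, $\prescript{p}{}{\tau}^{U}_{\le 0}$ is a shift of the ordinary truncation and so preserves $D_\mot(U)$; on $Z$, $\prescript{p}{}{\tau}^{Z}_{\le 0}$ preserves $D_\mot(Z)$ by Noetherian induction; and $j_!$, $j^*$, $i_*$, $i^!$ and cones preserve $D_\mot$ by Theorem \ref{t.mot} and thickness. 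This yields $\prescript{p}{}{\tau}_{\le 0}K\in D_\mot(X)$ directly, with no need to analyze $\pH^k(j_!j^*K)$ at all.
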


Theorems \ref{t.2} (1) and \ref{t.mot} imply the following result on the
structure of $D^b(X,\Qlb)$. For $a\in \Zlb^\times$, we let
$D^b_\mot(X,\Qlb)^{(a)}\subseteq D^b(X,\Qlb)$ denote the full subcategory
spanned by objects of the form $K^{(a)}$ with $K\in D^b_\mot(X,\Qlb)$. By
definition, $D^b_\mot(X,\Qlb)^{(a)}$ only depends on the class of $a$ in
$\Zlb^\times/W(q)$.

\begin{theorem}\label{t.D}
For any stack $X$, we have a canonical decomposition for the bounded derived
category of $\Qlb$-sheaves:
\[D^{b}(X,\Qlb)\simeq \bigoplus_{a\in
\Zlb^\times /W(q)} D^b_\mot(X,\Qlb)^{(a)}.
\]
\end{theorem}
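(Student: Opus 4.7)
The plan is to adapt the proof of Proposition \ref{p.Weil} to the present setting of $\Qlb$-sheaves and the bounded derived category. The two ingredients are: (\emph{orthogonality}) $\Hom_{D^b(X,\Qlb)}(K, L[i]) = 0$ for $K \in D^b_\mot(X,\Qlb)^{(a)}$, $L \in D^b_\mot(X,\Qlb)^{(b)}$ with $a/b \notin W(q)$ and every $i \in \Z$; and (\emph{generation}) every object of $D^b(X,\Qlb)$ is a successive extension, via distinguished triangles, of objects from $\bigcup_{a \in \Zlb^\times} D^b_\mot(X,\Qlb)^{(a)}$. Given both, any distinguished triangle $A \to C \to B \to A[1]$ with $A$, $B$ in subcategories corresponding to distinct classes in $\Zlb^\times/W(q)$ splits, since its connecting morphism lies in a $\Hom$ group vanishing by orthogonality; iterating yields the finite direct-sum decomposition $K \simeq \bigoplus_a K_a$ with $K_a \in D^b_\mot(X,\Qlb)^{(a)}$, and orthogonality guarantees it is canonical.

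For orthogonality, write $K \simeq K_0^{(a)}$ and $L \simeq L_0^{(b)}$ with $K_0, L_0 \in D^b_\mot(X,\Qlb)$. The identification $R\cHom(K_0^{(a)}, L_0^{(b)}) \simeq R\cHom(K_0, L_0)^{(b/a)}$ together with the projection formula give
\[
R\Hom_{D^b(X,\Qlb)}(K, L) \simeq R\Gamma\bigl(\Spec(\F_q),\; R\pi_{X*} R\cHom(K_0, L_0)^{(b/a)}\bigr).
\]
By Theorem \ref{t.mot}, $R\cHom(K_0, L_0)$ is in $D^+_\mot(X,\Qlb)$, hence $R\pi_{X*} R\cHom(K_0, L_0)$ is in $D^+_\mot(\Spec(\F_q),\Qlb)$, so $\Fr_q$ acts on each of its cohomology groups with eigenvalues in $W(q)$. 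The additional twist by $b/a$ rescales these eigenvalues to $(b/a) W(q)$, which contains $1$ if and only if $b/a \in W(q)$; hence the Galois-fixed part vanishes in every degree, giving the desired vanishing of $\Hom(K, L[i])$.

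For generation, by induction on cohomological amplitude via the truncation triangles, it suffices to decompose an individual constructible $\Qlb$-sheaf $\cF$. Stratify $X$ by finitely many irreducible, geometrically unibranch locally closed substacks $j_i\colon X_i \hookrightarrow X$ over which $\cF \res X_i$ is lisse, using Lemma \ref{l.sep} and the existence of a dense geometrically unibranch open substack in any reduced stack. Then $\cF$ is a successive extension of the $j_{i!}(\cF \res X_i)$. On each $X_i$, the lisse sheaf $\cF \res X_i$ is a successive extension of simple lisse $\Qlb$-sheaves; Proposition \ref{p.twist} writes each simple as $\cG^{(a)}$ with $\det(\cG)$ of finite order, and Theorem \ref{t.2} (1) then identifies the Frobenius eigenvalues of $\cG$ as $q$-Weil numbers of weight $0$, making $\cG$ weakly motivic. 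Since $\cF \res X_i$ is itself a $\Qlb$-sheaf and weight-$0$ Weil numbers are $\ell$-adic units, the twist parameter $a$ is in addition an $\ell$-adic unit, i.e.\ $a \in \Zlb^\times$. Finally, $j_{i!}$ preserves weakly motivic complexes by Theorem \ref{t.mot} and commutes with the twist (pulled back from the base), so $j_{i!}(\cG^{(a)}) = j_{i!}(\cG)^{(a)}$ belongs to $D^b_\mot(X,\Qlb)^{(a)}$, as required.

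The main obstacle is transferring the decomposition from the sheaf level, where the argument above operates, to the derived-category level; this is precisely where the orthogonality in \emph{all} degrees is indispensable, as it splits the truncation triangles assembling a general $K \in D^b(X,\Qlb)$ from its cohomology sheaves. A remark on the index set: in contrast to Proposition \ref{p.Weil}, where it is $\Qlb^\times/\Zlb^\times$, here the index set shrinks to $\Zlb^\times/W(q)$, reflecting that twists of $\Qlb$-sheaves must be by $\ell$-adic units and that $D^b_\mot(X,\Qlb)^{(a)}$ is invariant under further twisting by elements of $W(q) \subset \Zlb^\times$.
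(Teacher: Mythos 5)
Your proposal is correct and takes essentially the same approach as the paper: generation from Proposition \ref{p.twist} and Theorem \ref{t.2}~(1) via stratification, and orthogonality by applying Theorem \ref{t.mot} to $R\pi_{X*}R\cHom(A,B)^{(b/a)}$. One small point of precision: the vanishing of $R\Gamma(\Spec(\F_q),-)$ requires both the Galois invariants \emph{and} coinvariants to vanish (not only the fixed part, as you write), which does hold here since $\Fr_q$ has no eigenvalue $1$ on any cohomology group.
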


The case of schemes is \cite[Theorem B.7]{Dr}.

\begin{proof}
The proof is very similar to the case of schemes and parallel to the proof
of Proposition \ref{p.Weil}. It suffices to show the following:
\begin{itemize}
\item (generation) Every object of $D^b(X,\Qlb)$ is a successive extension
    of objects of $D^b_\mot(X,\Qlb)^{(a)}$;
\item (orthogonality) $\Hom(A^{(a)},B^{(b)})=0$ for $A,B\in
    D^b_\mot(X,\Qlb)$, $a/b\not\in W(q)$.
\end{itemize}
The first point follows from Proposition \ref{p.twist} and Theorem \ref{t.2}
(1). For the orthogonality, note that
\[\Hom(A^{(a)},B^{(b)})\simeq
H^0(\Spec(\F_q),R\pi_{X*} R\cHom(A,B)^{(b/a)})=0,
\]
where $\pi_X\colon X\to \Spec(\F_q)$. Here we used the fact that $R\pi_{X*}
R\cHom(A,B)$ is in $D^+_\mot(\Spec(\F_q),\Qlb)$ by Theorem \ref{t.mot}.
\end{proof}

\begin{remark}\label{r.Pervmot}
The same decomposition holds for categories of $\Qlb$-sheaves and perverse
$\Qlb$-sheaves. In particular, the subcategory of weakly motivic perverse
sheaves $\Perv_\mot(X,\Qlb)\subseteq \Perv(X,\Qlb)$ is stable under
subquotient.
\end{remark}

\section{Frobenius traces}\label{s.2}

Theorem \ref{t.2} (2) follows immediately from Theorem \ref{t.2} (1) and the
following.

\begin{theorem}\label{t.gen}
Let $X$ be a stack and let $\cF$ be a Weil $\Qlb$-sheaf on $X$. Then
$E(\cF)$ is a finitely generated extension of $\Q$. In particular, $E(\cF)$
is a number field if and only if for all $n\ge 1$ and all $x\in
X(\F_{q^n})$, the eigenvalues of $\Fr_x$ acting on $\cF_{\bar x}$ are
algebraic numbers.
\end{theorem}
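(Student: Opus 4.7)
The plan is to reduce Theorem \ref{t.gen} to the smooth-scheme case, where it is Deligne's theorem \cite[Th\'eor\`eme 3.1]{Deligne}. The ``in particular'' clause will then be immediate, since a finitely generated extension of $\Q$ consists entirely of algebraic numbers if and only if it has transcendence degree $0$, i.e.\ is a number field.

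First I would reduce to a single simple lisse Weil $\Qlb$-sheaf on a normal connected stack. Since $\cF$ is constructible, there is a finite stratification $X = \coprod_i X_i$ with each $X_i$ locally closed, reduced, and normal, on which $\cF\res X_i$ is lisse. Then $E(\cF)$ is the compositum of the finitely many fields $E(\cF\res X_i)$, and the compositum of finitely many finitely generated extensions of $\Q$ is finitely generated. Replacing $\cF\res X_i$ by its semisimplification preserves all Frobenius traces, and a lisse sheaf of finite rank has only finitely many simple Jordan--H\"older factors, so I may assume $\cF$ is a simple lisse Weil $\Qlb$-sheaf on a normal connected (hence geometrically unibranch irreducible) stack $X$.

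Next I would apply Proposition \ref{p.twist} to find $a\in \Qlb^\times$ with $\det(\cF^{(a)})$ of finite order; by the second part of that proposition, $\cF^{(a)}$ is then a genuine $\Qlb$-sheaf. Since $\tr(\Fr_x,\cF^{(a)}_{\bar x}) = a^n\tr(\Fr_x,\cF_{\bar x})$ for every $x\in X(\F_{q^n})$, the fields $E(\cF)$ and $E(\cF^{(a)})$ are each contained in the other enlarged by the single element $a$, so it suffices to treat $\cF^{(a)}$, and I may assume $\cF$ is a simple lisse $\Qlb$-sheaf on $X$ with $\det(\cF)$ of finite order. Then, applying Lemma \ref{l.Behrend}, I obtain a dominant open immersion $j\colon U\to X$ and a gerbe-like morphism $f\colon U\to Y$ with $Y$ a geometrically unibranch Deligne--Mumford stack; setting $\cG = f_*j^*\cF$, Remark \ref{r.gerbe} gives $E(\cG) = E(j^*\cF)$, and the passage from $X$ to $U$ is harmless because $\cF\simeq j_*j^*\cF$ on the geometrically unibranch connected stack $X$, so that Chebotarev density of Frobenius classes in the Weil group gives $E(\cF) = E(j^*\cF)$. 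By Lemma \ref{l.sep} and \cite[Corollaire 6.1.1]{LMB}, after further shrinking I may write $Y = [Z/G]$ for a quasi-projective scheme $Z$ with an action of a finite group $G$. Fixing an embedding $G\hookrightarrow \GL_m$, the $\GL_m$-torsor $\pi\colon Z\wedge^G\GL_m\to [Z/G]$ is smooth surjective with geometrically connected fibers, and by Lang's theorem every $\F_{q^n}$-point of $[Z/G]$ lifts to $Z\wedge^G\GL_m$, so $E(\cG) = E(\pi^*\cG)$ on the smooth quasi-projective scheme $Z\wedge^G\GL_m$, where the theorem is \cite[Th\'eor\`eme 3.1]{Deligne}.

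The main obstacle I anticipate is the careful bookkeeping of $E(\cF)$ under these successive reductions---in particular, verifying that the passage to a dense open substack preserves $E$ via the Chebotarev argument (rather than merely contains it), and that the $\GL_m$-torsor step leaves $E$ unchanged via Lang's theorem, so that the finite generation obtained at the end truly applies to the original $E(\cF)$ on $X$.
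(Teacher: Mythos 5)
Your overall reduction --- stratify to the lisse case, pass through Lemma \ref{l.Behrend} to a Deligne--Mumford stack, then to a global quotient by a finite group, and finally to a smooth scheme via the associated $\GL_m$-torsor --- matches the paper's, and Lang's theorem is an acceptable substitute for the paper's use of Hilbert's Theorem 90 to lift rational points over that torsor. The twisting step to reduce to genuine $\Qlb$-sheaves with finite-order determinant is not needed (the paper cites Deligne in the form of Th\'eor\`eme 3.1 together with Remarque 3.9, which handles Weil sheaves and finite generation directly), but it is harmless since $\Q(a)$ is finitely generated.

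There is, however, a genuine gap where you shrink to the dense open $U$ produced by Lemma \ref{l.Behrend} and assert that $\cF\simeq j_*j^*\cF$ together with Chebotarev density yields $E(\cF)=E(j^*\cF)$. This does not follow. Density of the Frobenius conjugacy classes at points of $U$ in the Weil group, together with continuity of the trace, shows only that $\tr(\Fr_x,\cF_{\bar x})$ for $x\in (X\setminus U)(\F_{q^n})$ lies in the $\ell$-adic \emph{closure} of $E(j^*\cF)$, and a proper subfield of $\Qlb$ is never $\ell$-adically closed. The equality $E(\cF)=E(j^*\cF)$ is in fact true, but the paper establishes it only later (Proposition \ref{p.Eopen}), under smoothness and separated-diagonal hypotheses, via Gabber's theorem on intermediate extensions --- and that proposition relies on Theorem \ref{t.gen}, so invoking it here would be circular. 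The paper avoids the issue entirely by noetherian induction on $X$: to show $E(\cF)$ is finitely generated it suffices to show $E(j^*\cF)$ is, because $E(\cF)$ is the compositum of $E(j^*\cF)$ with $E(i^*\cF)$ for $i$ the closed complement, and $E(i^*\cF)$ is finitely generated by the induction hypothesis. Replacing your single upfront stratification and the unjustified field equality by that induction closes the gap, after which the rest of your reductions go through.
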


The case of schemes is a theorem of Deligne \cite[Th\'eor\`eme 3.1, Remarque
3.9]{Deligne}.

\begin{proof}
To show the first assertion, by induction, we may replace $X$ by a dense
open substack. In particular, we may assume that $\cF$ is lisse. Moreover,
by Lemma \ref{l.Behrend} and Remark \ref{r.gerbe} (or the fact that any
sub-extension of a finitely generated field extension is finitely generated
\cite[page V.113, Corollaire 3]{BAlg}), we may assume that $X$ is a
Deligne-Mumford stack. We may further assume that $X\simeq [Y/G]$ for a
finite group $G$ acting on an affine scheme $Y$. Choose an embedding $G\to
\GL_m$ and take $Z=Y\wedge^G \GL_m=(Y\times\GL_m)/G$. Then $f\colon Z\to X$
is a $\GL_m$-torsor. We have $E(f^*\cF)=E(\cF)$. Indeed, any point $x\in
X(\F_{q^n})$ lifts to a point of $Z(\F_{q^n})$ by Hilbert's Theorem 90. We
then apply the case of schemes \cite[Th\'eor\`eme 3.1, Remarque
3.9]{Deligne} to $f^*\cF$ on $Z$. For the second assertion, it suffices to
note that the Frobenius eigenvalues are algebraic numbers if and only if the
Frobenius traces are algebraic numbers.
\end{proof}

\begin{cor}\label{c.Esub}
Let $\cF$ be a Weil $\Qlb$-sheaf on a stack $X$ and let $\cG$ be a
subquotient of $\cF$. Then $E(\cG)$ is contained in a finite extension of
$E(\cF)$.
\end{cor}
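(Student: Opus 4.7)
My plan is to reduce the problem to showing that $E(\cG)$ is algebraic over $E(\cF)$, and then use Theorem \ref{t.gen} to upgrade ``algebraic'' to ``contained in a finite extension''. More precisely, by Theorem \ref{t.gen} the field $E(\cG)$ is finitely generated over $\Q$, hence a fortiori finitely generated over $E(\cF)$; a finitely generated algebraic extension is finite, so it suffices to prove that every generator of $E(\cG)$ is algebraic over $E(\cF)$. The generators in question are the Frobenius traces $\tr(\Fr_x,\cG_{\bar x})$ for $x\in X(\F_{q^n})$ and $n\ge 1$.

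The key observation is that $E(\cF)$ contains not merely the traces $\tr(\Fr_x,\cF_{\bar x})$ but also the traces of all positive powers $\tr(\Fr_x^k,\cF_{\bar x})$. Indeed, given $x\in X(\F_{q^n})$, composing with the inclusion $\Spec(\F_{q^{nk}})\to\Spec(\F_{q^n})$ produces a point $x_k\in X(\F_{q^{nk}})$ whose geometric point may be identified with $\bar x$, and whose associated geometric Frobenius $\Fr_{x_k}=\Fr_{q^{nk}}$ acts on $\cF_{\bar x}$ as $\Fr_x^k$. Hence $\tr(\Fr_x^k,\cF_{\bar x})=\tr(\Fr_{x_k},\cF_{\bar{x}_k})\in E(\cF)$ for every $k\ge 1$. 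By Newton's identities applied to the rank $r$ operator $\Fr_x$, the coefficients of the characteristic polynomial of $\Fr_x$ on $\cF_{\bar x}$ lie in $E(\cF)$, so every eigenvalue of $\Fr_x$ on $\cF_{\bar x}$ is algebraic over $E(\cF)$ of degree at most~$r$.

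Now since $\cG$ is a subquotient of $\cF$, there is a two-step filtration $\cF_1\subseteq \cF_2\subseteq \cF$ with $\cG\simeq \cF_2/\cF_1$, and passing to the stalk at $\bar x$ shows that the multiset of eigenvalues of $\Fr_x$ on $\cG_{\bar x}$ is a sub-multiset of that on $\cF_{\bar x}$. Consequently each eigenvalue of $\Fr_x$ on $\cG_{\bar x}$ is algebraic over $E(\cF)$, and so is their sum $\tr(\Fr_x,\cG_{\bar x})$. This shows $E(\cG)$ is algebraic over $E(\cF)$, and combining with the first paragraph we conclude that the compositum $E(\cF)\cdot E(\cG)$ is a finite extension of $E(\cF)$ containing $E(\cG)$.

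I do not foresee a serious obstacle: the only delicate point is justifying that powers of $\Fr_x$ produce elements of $E(\cF)$, which rests on the inclusion $\F_{q^n}\hookrightarrow \F_{q^{nk}}$ and the compatibility of geometric Frobenius with base change of the residue field; everything else is elementary Galois theory together with the nontrivial input of Theorem \ref{t.gen}.
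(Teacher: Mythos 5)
Your proof is correct and takes essentially the same route as the paper: both use Theorem \ref{t.gen} to reduce to finitely many generating traces, and both exploit the fact that the traces of all powers $\Fr_x^k$ on $\cF_{\bar x}$ lie in $E(\cF)$ (by passing to $X(\F_{q^{nk}})$), so that the characteristic polynomial of $\Fr_x$ has coefficients in $E(\cF)$ and the eigenvalues — hence those of the subquotient $\cG$ — are algebraic over it. The paper merely compresses your Newton-identities step into the remark that the eigenvalues lie in a finite extension of $E(x^*\cF)\subseteq E(\cF)$.
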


\begin{proof}
For each $x\in X(\F_{q^n})$, the eigenvalues of $\Fr_{x}$ on $\cF_{\bar x}$
are contained in a finite extension of $E(x^*\cF)\subseteq E(\cF)$. The
assertion then follows from the theorem, which says that $E(\cG)$ is
generated by the traces $\tr(\Fr_{x},\cG_{\overline{x}})$ at a finite number
of points $x$ with varying $n$.
\end{proof}

For any morphism $f\colon X\to Y$ of stacks and any Weil $\Qlb$-sheaf $\cG$
on $Y$, we have $E(f^*\cG)\subseteq E(\cG)$.

\begin{cor}\label{c.finite2}
Let $f\colon X\to Y$ be a morphism  of stacks with $X$ nonempty and $Y$
connected. For any \emph{lisse} Weil $\Qlb$-sheaf $\cG$ on $Y$, the field
$E(\cG)$ is a finite extension of $E(f^*\cG)$. In particular, for any $n\ge
1$ and any $y\in Y(\F_{q^{n}})$, the field $E(\cG)$ is a finite extension of
$E(y^*\cG)$ (the field generated by $\tr(\Fr^m_{y},\cF_{\bar y})$, $m\ge
1$).
\end{cor}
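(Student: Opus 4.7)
The plan is to deduce the corollary formally from Theorem \ref{t.gen} together with the comparison of Frobenius eigenvalues supplied by Proposition \ref{p.eigen}. First I would observe that the two assertions are equivalent: the second is the first applied to $f=y\colon\Spec(\F_{q^n})\to Y$; conversely, any nonempty Artin stack of finite presentation over $\F_q$ admits a point $x\in X(\F_{q^m})$ for some $m$ (via a smooth presentation and an $\Fqb$-point), and setting $y=f(x)\in Y(\F_{q^m})$ gives
\[E(y^*\cG)=E(x^*f^*\cG)\subseteq E(f^*\cG)\subseteq E(\cG),\]
so finiteness of $[E(\cG):E(y^*\cG)]$ implies the general statement. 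It thus suffices to fix $y\in Y(\F_{q^n})$ and show $[E(\cG):E(y^*\cG)]<\infty$.

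Next, by Theorem \ref{t.gen}, $E(\cG)$ is finitely generated over $\Q$, a fortiori over $E(y^*\cG)$. Since a finitely generated algebraic extension is finite, it is enough to prove that $E(\cG)/E(y^*\cG)$ is algebraic, which in turn reduces to showing that for each $z\in Y(\F_{q^{n'}})$ the trace $\tr(\Fr_z,\cG_{\bar z})$ is algebraic over $E(y^*\cG)$. Let $\alpha_1,\dots,\alpha_r$ and $\beta_1,\dots,\beta_r$ denote the eigenvalues of $\Fr_y$ and $\Fr_z$ on the respective stalks. The power sums $\sum_i\alpha_i^k=\tr(\Fr_y^k,\cG_{\bar y})$ all lie in $E(y^*\cG)$ for $k\ge 1$, so Newton's identities show that the elementary symmetric functions in the $\alpha_i$ lie in $E(y^*\cG)$, whence the $\alpha_i$ themselves are algebraic over $E(y^*\cG)$ (of degree at most $r$).

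Finally I would apply Proposition \ref{p.eigen} to the connected stack $Y$ and the lisse Weil sheaf $\cG$ of rank $r$: after a suitable reordering of the $\beta_i$, one has $\beta_i^{1/n'}/\alpha_i^{1/n}\in W_0(q)\subset \overline{\Q}^\times$ for each $i$. Raising to the $nn'$-th power removes the ambiguity in the chosen roots and yields $\beta_i^n/\alpha_i^{n'}\in\overline{\Q}^\times$. Since $\alpha_i^{n'}$ is algebraic over $E(y^*\cG)$ and the right-hand side is algebraic over $\Q$, it follows that $\beta_i^n$, and hence $\beta_i$, is algebraic over $E(y^*\cG)$; summing gives the desired algebraicity of $\tr(\Fr_z,\cG_{\bar z})$. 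The argument is essentially formal given the two cited inputs; the only step requiring attention is the clean passage from the fractional-power statement of Proposition \ref{p.eigen} to an honest algebraicity statement for $\beta_i$, which works precisely because $W_0(q)$ consists of algebraic numbers.
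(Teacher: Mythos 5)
Your proposal is correct and follows essentially the same route as the paper: both deduce the result from Theorem \ref{t.gen} (which makes $E(\cG)$ finitely generated, hence generated by traces at finitely many points) combined with Proposition \ref{p.eigen} (which makes each such trace algebraic over $E(y^*\cG)$). You spell out the Newton-identities step and the passage from fractional to integral powers a bit more explicitly than the paper, and you present the equivalence of the two assertions in both directions rather than just deducing the first from the second, but the substance is the same.
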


\begin{proof}
For any $y'\in X(\F_{q^{n'}})$, the eigenvalues of $\Fr_{y'}$ on
$\cG_{\overline{y'}}$ are contained in a finite extension of $E(y^*\cG)$ by
Proposition \ref{p.eigen}. The second assertion then follows from the
theorem, which says that $E(\cG)$ is generated by the traces
$\tr(\Fr_{y'},\cG_{\overline{y'}})$ at a finite number of points $y'$ with
varying $n'$. For the first assertion, let $x\in X(\F_{q^n})$ and let
$y=f(x)$. Then $E(y^*\cG)\subseteq E(f^*\cG)\subseteq E(\cG)$ and the first
assertion follows from the second one.
\end{proof}

\begin{cor}\label{c.finite}
Let $f\colon X\to Y$ be a surjective morphism of stacks. Then, for any Weil
$\Qlb$-sheaf $\cG$ on $Y$, the field $E(\cG)$ is a finite extension of
$E(f^*\cG)$.
\end{cor}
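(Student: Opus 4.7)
The plan is to reduce to the setting of Corollary~\ref{c.finite2}, namely a connected target carrying a lisse sheaf, by stratifying $Y$. Since $\cG$ is a constructible Weil sheaf on the Noetherian stack $Y$, there is a finite stratification of $Y$ by locally closed substacks on which $\cG$ becomes lisse; decomposing each stratum further into its finitely many connected components, we obtain connected locally closed substacks $Z_1,\dots,Z_k\subseteq Y$ that partition $|Y|$ set-theoretically and such that $\cG_i\colonequals \cG\res Z_i$ is a lisse Weil $\Qlb$-sheaf on $Z_i$ for every $i$.

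For each $i$, the base change $f_i\colon f^{-1}(Z_i)\to Z_i$ is surjective because $f$ is, so in particular $f^{-1}(Z_i)$ is nonempty. As $Z_i$ is connected and $\cG_i$ is lisse, Corollary~\ref{c.finite2} applies to $f_i$ and yields that $E(\cG_i)$ is a finite extension of $E(f_i^*\cG_i)$. Since $f_i^*\cG_i$ is the restriction of $f^*\cG$ to $f^{-1}(Z_i)$, we have $E(f_i^*\cG_i)\subseteq E(f^*\cG)$, so each $E(\cG_i)$ sits inside a finite extension of $E(f^*\cG)$ inside $\Qlb$.

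It remains to observe that every $y\in Y(\F_{q^n})$ factors through exactly one $Z_i$: its image lies in some $|Z_i|$, and a reduced $\F_{q^n}$-point factors through any locally closed substack whose support contains its image. Hence $E(\cG)$ is the compositum of the fields $E(\cG_i)$ for $1\le i\le k$; being a compositum of finitely many finite extensions of $E(f^*\cG)$ inside $\Qlb$, it is itself a finite extension of $E(f^*\cG)$, which is the claim. The only mild subtlety is this last compatibility between constructible stratifications and $\F_{q^n}$-valued points of stacks; otherwise the argument is a direct bootstrap from the already-proven case of a connected base and a lisse sheaf.
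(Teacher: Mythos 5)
Your argument is correct and is essentially identical to the paper's proof, which also proceeds by stratifying $Y$ into connected locally closed strata on which $\cG$ is lisse and then applying Corollary~\ref{c.finite2} to the base change of $f$ over each stratum. You simply spell out the bookkeeping (surjectivity guaranteeing nonempty preimages, $\F_{q^n}$-points landing in a unique stratum, and the compositum of finitely many finite extensions being finite) that the paper leaves implicit.
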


\begin{proof}
This follows from Corollary \ref{c.finite2} by taking a stratification of
$Y$ by connected strata such that the restriction of $\cG$ to each stratum
is lisse.
\end{proof}

Under additional assumptions, Corollary \ref{c.finite2} admits the following
refinement, which is a consequence of Gabber's theorem on the preservation
of companionship.

\begin{prop}\label{p.Eopen}
Let $f\colon X\to Y$ be a dominant open immersion of smooth stacks, $Y$
having separated diagonal, and let $\cG$ be a lisse Weil $\Qlb$-sheaf on
$Y$. Then $E(f^*\cG)=E(\cG)$. Moreover, if $\cG'$ is a lisse Weil
$\Qlbp$-sheaf on $Y$ such that $f^*\cG'$ is a $\sigma$-companion of $f^*\cG$
for some embedding $\sigma\colon E(\cG)\to \Qlbp$, then $\cG'$ is a
$\sigma$-companion of $\cG$.
\end{prop}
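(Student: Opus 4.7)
The plan is to first prove the second assertion by invoking Gabber's theorem on the preservation of $\sigma$-companionship under the six operations (extended to stacks in \cite[Proposition 5.8]{Zind}), and then to deduce the first assertion by a Galois-theoretic argument combined with Corollary \ref{c.finite2}.

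For the second assertion I would proceed by Noetherian induction on the closed complement $Z=Y\setminus X$. At the inductive step, choose a dense open smooth substack $W\subseteq Z_\red$, of codimension $c\ge 1$ in the smooth stack $Y$, and verify that $\cG'$ is a $\sigma$-companion of $\cG$ at every $y\in W(\F_{q^n})$; one can then enlarge $X$ to $X\cup W$ (still dense open in $Y$, with strictly smaller-dimensional complement) and iterate. To establish the pointwise assertion at $y$, apply Gabber's theorem with the operation $Rf_*$ to $f^*\cG\equiv_\sigma f^*\cG'$, obtaining $Rf_*f^*\cG\equiv_\sigma Rf_*f^*\cG'$. Since $\cG,\cG'$ are lisse, the projection formula gives $Rf_*f^*\cG\simeq \cG\otimes Rf_*\Qlb_X$ and similarly for $\cG'$. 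Taking stalks at $\bar y$ and using multiplicativity of traces yields
\[
\tr(\Fr_y^m,\cG'_{\bar y})\cdot T'(y,m)=\sigma\tr(\Fr_y^m,\cG_{\bar y})\cdot \sigma T(y,m),
\]
where $T(y,m)=\sum_i(-1)^i\tr(\Fr_y^m,(R^if_*\Qlb_X)_{\bar y})$ and $T'$ is its $\Qlbp$-analogue. A second application of Gabber's theorem to the tautological companionship $\Qlb_X\equiv_\sigma\Qlbp_X$ (both trace-$1$) gives $T'(y,m)=\sigma T(y,m)$. To compute $T(y,m)$, pull back along a smooth atlas so that the question reduces to a smooth pair of schemes: the punctured strict henselization is étale-locally $\A^c\setminus\{0\}$, whose étale cohomology is $\Qlb$ in degree $0$ and $\Qlb(-c)$ in degree $2c-1$, yielding $T(y,m)=1-q^{nmc}$, a nonzero integer. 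Since $T(y,m)\in\Z$ is $\sigma$-fixed, cancelling it produces the required trace identity.

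For the first assertion, set $K=E(f^*\cG)$. By Corollary \ref{c.finite2} the extension $E(\cG)/K$ is finite, hence $E(\cG)\subseteq \overline{K}\cap\Qlb$. For any $K$-embedding $\tau\colon E(\cG)\to\Qlb$, its restriction to $K$ is the identity, so $f^*\cG$ is trivially a $\tau$-companion of itself on $X$. The just-proved second assertion, applied with $\cG'=\cG$, $\Qlbp=\Qlb$, and $\sigma=\tau$, then shows that $\cG$ is a $\tau$-companion of itself on $Y$: the embedding $\tau$ fixes every trace $\tr(\Fr_y,\cG_{\bar y})$ with $y\in Y(\F_{q^n})$. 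Because this holds for every $K$-embedding $\tau$ and $E(\cG)/K$ is finite separable, every such trace lies in $K$, giving $E(\cG)\subseteq K$ and hence $E(\cG)=E(f^*\cG)$.

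The main technical obstacle is the nonvanishing of the link factor $T(y,m)$; working at the smooth locus of $Z_\red$ reduces it to the classical cohomology of $\A^c\setminus\{0\}$ and gives the clean nonzero formula $T(y,m)=1-q^{nmc}$. A secondary point is that Gabber's theorem must be applied in the Artin stack setting in the required Grothendieck-group form, for which one relies on \cite[Proposition 5.8]{Zind}, and that the cohomological computation has to be carried out after pulling back along a smooth atlas so that smooth base change identifies the relevant higher direct images.
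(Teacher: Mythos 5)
Your proposal is correct, but it takes a genuinely more self-contained route than the paper. The paper's proof is a two-line reduction: by the existence of smooth neighborhoods \cite[Th\'eor\`eme 6.3]{LMB} (this is exactly where the separated-diagonal hypothesis enters), any $y\in Y(\F_{q^n})$ factors through a smooth morphism $Y'\to Y$ from a scheme, and then the scheme case is cited directly from \cite[Proposition 3.10]{Zind}, with the remark that the latter is ``a consequence of Gabber's theorem and purity.'' What you have done is essentially \emph{reprove} the content of that citation, and directly at the level of the stack rather than after reducing to a scheme: the Noetherian induction on the complement $Z$, the stratum-by-stratum purity computation of the stalk $(Rf_*\Qlb_X)_{\bar y}$, and the cancellation of the nonzero integer $T(y,m)=1-q^{nmc}$ (tautologically fixed by $\sigma$) are precisely the ``Gabber plus purity'' mechanism that \cite[Proposition 3.10]{Zind} packages. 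Your trick of extracting the second, companion assertion first and then deriving $E(f^*\cG)=E(\cG)$ by considering all $K$-embeddings $\tau$ of $E(\cG)$ over $K=E(f^*\cG)$ (using Corollary \ref{c.finite2} to know $E(\cG)/K$ is finite, hence separable) is a clean Galois-theoretic reduction that the paper does not spell out. The paper also records a second route (Drinfeld's Hilbert irreducibility plus Deligne's curve case), which you did not consider.

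A few small points to tighten. First, the local model should not be described as \'etale-locally $\A^c\setminus\{0\}$ --- the punctured strict henselization of a smooth $d$-fold along a smooth codimension-$c$ closed is rather a henselian analogue of $\A^d\setminus\A^{d-c}$ --- but the absolute purity triangle $i_*i^!\Qlb_Y\to\Qlb_Y\to Rf_*\Qlb_X\to{}$ with $i^!\Qlb_Y\simeq\Qlb(-c)[-2c]$ gives the same answer, so the formula $T(y,m)=1-q^{nmc}$ stands. Second, the smooth locus of $Z_\red$ need not be equidimensional, so $c$ must be read off per stratum; this is cosmetic. Third, when you later use Corollary \ref{c.finite2} for the first assertion, you should work component-by-component on $Y$ since that corollary assumes $Y$ connected. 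Finally, although you flag ``pulling back along a smooth atlas'' for the stalk computation, note that the Weil-equivariance of the purity isomorphism already makes the Frobenius action on the Tate twist $\Qlb(-c)$ canonical, so one does not actually need to lift $y$ to a rational point of a scheme for the computation of $T(y,m)$; the separated diagonal hypothesis is being used (as in the paper) through Gabber's theorem in the stack setting, Theorem \ref{t.Gabber}/\cite[Proposition 5.8]{Zind}, which is the one place your argument genuinely relies on it.
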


\begin{proof}
By the existence of smooth neighborhood \cite[Th\'eor\`eme 6.3]{LMB} (here
we used the assumption that $Y$ has separated diagonal), any point $y\in
Y(\F_{q^n})$ factorizes through a smooth morphism $Y'\to Y$, where $Y'$ is a
scheme. We are thus reduced to the case of schemes, which is a special case
of \cite[Proposition 3.10]{Zind} (case $K$ a finite field and $G=\{1\}$),
consequence of Gabber's theorem and purity.
\end{proof}

As pointed out to us by Drinfeld, another way to prove the case of schemes
of the proposition is to reduce by Drinfeld's version of Hilbert
irreducibility \cite[Theorem 2.15]{Dr} to the case of smooth curves, which
is a theorem of Deligne \cite[Th\'eor\`eme 9.8]{Delignec}.

\section{Companions}\label{s.3}
In this section, we prove Theorem \ref{t.1} on the existence of lisse
companions on smooth stacks of separated diagonal and Corollary
\ref{c.moduli} on the existence of lisse companions on coarse moduli spaces.
We then deduce the existence of perverse companions on stacks of separated
diagonal (Theorem \ref{t.perverse}). We also deduce that companionship
induces isomorphisms among the Grothendieck groups of Weil $\Qlb$-sheaves
for varying $\ell$ (Corollary \ref{c.K}).

To apply the reduction steps to Theorem \ref{t.1}, again we need to show
that we may shrink $X$. This is done by combining Proposition \ref{p.Eopen}
with Drinfeld's theorem on the existence of companions on schemes. Let
$E_{\lambda'}$ be an algebraic extension of $\Q_{\ell'}$.

\begin{prop}\label{p.comp}
Let $j\colon U\to X$ be a dominant open immersion of smooth stacks, $X$
having separated diagonal. Let $\cF$ be a lisse Weil $\Qlb$-sheaf on $X$ and
let $\sigma\colon E(\cF) \to E_{\lambda'}$ and $\iota'\colon E_{\lambda'}\to
\C$ be  embeddings. Assume that $j^*\cF$ admits a lisse punctually
$\iota'$-pure $\sigma$-companion $\cG'$. Then $j_*\cG'$ is a lisse
$\sigma$-companion of $\cF$.
\end{prop}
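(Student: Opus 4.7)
The plan is to reduce to the case where $X$ is a smooth scheme and then combine Drinfeld's theorem \cite[Theorem 1.1]{Dr} with the structure theorem for punctually $\iota'$-pure lisse sheaves (Proposition \ref{p.purelisse}) and with Proposition \ref{p.Eopen}. For the reduction, I would choose a smooth presentation $p\colon X_0\to X$ with $X_0$ a smooth scheme, set $U_0 \colonequals p^{-1}(U)$ with induced open immersion $j_0\colon U_0\to X_0$ and pullback $p_U\colon U_0\to U$, and use flat base change to write $p^*j_*\cG' \simeq j_{0*} p_U^*\cG'$. Since $p_U^*\cG'$ inherits all the hypotheses (it is lisse, punctually $\iota'$-pure, and a $\sigma$-companion of $j_0^* p^*\cF$), Lemma \ref{l.lisse} reduces lisseness of $j_*\cG'$ on $X$ to lisseness of $j_{0*} p_U^*\cG'$ on the smooth scheme $X_0$.

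Assuming now that $X$ is a smooth scheme (without loss of generality connected), I would apply Drinfeld's theorem to produce a lisse $\sigma$-companion $\cH'$ of $\cF$ on $X$. Then $j^*\cH'$ and $\cG'$ are both lisse $\sigma$-companions of $j^*\cF$ on $U$, so their Frobenius trace functions coincide, and their semisimplifications agree by Chebotarev density and Brauer--Nesbitt. Next, using Proposition \ref{p.purelisse} (in its $E_{\lambda'}$ variant, available by the $\iota'$-purity hypothesis), decompose
\[
\cG' \simeq \bigoplus_i \cH_i \otimes \pi_U^*\cE_{n_i}
\]
with each $\cH_i$ simple lisse on $U$. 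The simple factors of $(\cG')^{\mathrm{ss}}$, namely the $\cH_i$, must match those of $(j^*\cH')^{\mathrm{ss}}$; since $X$ is normal, $\pi_1(U)$ surjects onto $\pi_1(X)$ on connected components, so every simple lisse summand of $\cH'^{\mathrm{ss}}$ on $X$ restricts to a simple lisse sheaf on $U$, and matching trace functions identifies each $\cH_i$ with $j^*\cK_{s(i)}$ for some simple summand $\cK_{s(i)}$ of $\cH'^{\mathrm{ss}}$. Using that $X$ is geometrically unibranch (Remark \ref{r.unib}), $j_*j^*\cK_{s(i)} \simeq \cK_{s(i)}$, so
\[
j_*\cG' \simeq \bigoplus_i j_*\bigl(j^*\cK_{s(i)}\otimes \pi_U^*\cE_{n_i}\bigr) \simeq \bigoplus_i \cK_{s(i)}\otimes \pi_X^*\cE_{n_i},
\]
which is lisse on $X$.

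Once $j_*\cG'$ is known to be lisse on $X$, Proposition \ref{p.Eopen} applies to the pair $(\cF, j_*\cG')$ on the smooth stack $X$ with separated diagonal: since $j^*(j_*\cG') \simeq \cG'$ is a $\sigma$-companion of $j^*\cF$, $j_*\cG'$ itself must be a $\sigma$-companion of $\cF$ on all of $X$. I expect the main obstacle to be the factor-matching step in the middle: one must line up the decomposition of $\cG'$ from Proposition \ref{p.purelisse} with that of $\cH'^{\mathrm{ss}}$ on $X$ term by term, which uses simultaneously the $\iota'$-purity hypothesis on $\cG'$ (so that Proposition \ref{p.purelisse} applies), Chebotarev/Brauer--Nesbitt to pin down the simple factors from traces, and the surjection $\pi_1(U)\twoheadrightarrow \pi_1(X)$ granted by the normality of $X$.
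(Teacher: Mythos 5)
Your proof is correct and follows essentially the same route as the paper: reduce lisseness of $j_*\cG'$ to the case of a smooth scheme via a smooth presentation and Lemma \ref{l.lisse}; use Drinfeld's theorem to produce a semisimple lisse $\sigma$-companion $\cF'$ of $\cF$; identify the semisimplifications of $\cG'$ and $j^*\cF'$ by Chebotarev; conclude lisseness via $j_*j^*\simeq\id$ on lisse sheaves over the normal scheme; and finally invoke Proposition \ref{p.Eopen}. The only place you diverge is in how the $\iota'$-purity hypothesis enters. You route it through Proposition \ref{p.purelisse}, decomposing $\cG'\simeq\bigoplus_i\cH_i\otimes\pi_U^*\cE_{n_i}$ and matching each simple $\cH_i$ with a $j^*\cK_{s(i)}$ term by term, whereas the paper applies the geometric semisimplicity theorem \cite[Th\'eor\`eme 3.4.1 (iii)]{WeilII} directly to observe $\cG'_{\Fqb}\simeq(\cG'^{\rss})_{\Fqb}$, so that $j_*\cG'$ is lisse if and only if $j_*\cG'^{\rss}\simeq j_*j^*\cF'\simeq\cF'$ is, bypassing the factor-by-factor bookkeeping. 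These are two expressions of the same input (punctual $\iota'$-purity implies geometric semisimplicity), and the paper's version is somewhat shorter. One detail worth adding to a careful write-up: first reduce to $E_{\lambda'}=\Qlbp$ (as the paper does, licensed by Remark \ref{r.Qlb}), so that the companion $\cH'$ output by Drinfeld's theorem and your $\cG'$ live over the same coefficient field before you compare Frobenius traces and invoke Chebotarev/Brauer--Nesbitt.
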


\begin{proof}
If $j_*\cG'$ is lisse, then $j_*\cG'$ is a $\sigma$-companion of $\cF$ by
Proposition \ref{p.Eopen}. It remains to show that $j_*\cG'$ is lisse. For
this we may assume $E_{\lambda'}=\overline{\Q_{\ell'}}$. Since any pullback
of $\cG'$ is punctually $\iota'$-pure, we may assume that $X$ is a scheme.
Since $\cG'$ is geometrically semisimple \cite[Th\'eo\`eme 3.4.1
(iii)]{WeilII}, $\cG'_{\Fqb}\simeq (\cG'^\rss)_{\Fqb}$, where $\cG'^{\rss}$
is the semisimplification of $\cG'$. Thus $j_*\cG'$ is lisse if and only if
$j_*\cG'^{\rss}$ is lisse. By Drinfeld's theorem, $\cF$ admits a lisse
$\sigma$-companion $\cF'$, which we may assume semisimple. By Chebotarev's
density theorem, we have $\cG'^{\rss}\simeq j^*\cF'$. Therefore, $j_*
\cG'^{\rss}\simeq j_*j^*\cF'\simeq \cF'$ is lisse.
\end{proof}

Assuming Theorem \ref{t.1} on the existence of lisse companions, we have the
following consequence of Chebotarev's density theorem.

\begin{prop}\label{p.simple}
Let $\cF$ be a lisse Weil $\Qlb$-sheaf on a geometrically unibranch stack
$X$ and let $\sigma\colon E(\cF)\to E_{\lambda'}$ be an embedding. Then
lisse $\sigma$-companions of $\cF$ are unique up to semisimplification.
Moreover, if $\cF$ is simple, then, up to isomorphism, there exists at most
one lisse $\sigma$-companion $\cF'$ of $\cF$, and $\cF'$ is simple if it
exists.
\end{prop}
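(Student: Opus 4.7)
The plan is to deduce both statements from Chebotarev's density theorem: on a connected geometrically unibranch stack $X$, the Frobenius classes are dense in $W(X)$, so a semisimple lisse Weil $E_{\lambda'}$-sheaf is determined up to isomorphism by its trace function. I would first reduce to $X$ connected (hence irreducible by Remark \ref{r.unib}) by decomposing into components, and reduced since lisse sheaves factor through $X_{\red}$. After this reduction, the first assertion is immediate: two lisse $\sigma$-companions $\cG_1, \cG_2$ of $\cF$ share the trace function $\sigma\circ t_\cF$, so $\cG_1^{\rss}\simeq\cG_2^{\rss}$ by Chebotarev.

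For the second assertion, the plan is to pass to a dense open $j\colon U\to X$ that is smooth (by generic smoothness for the reduced stack $X$ over the perfect field $\F_q$) and of separated diagonal (Lemma \ref{l.sep}); this prepares for Theorem \ref{t.1}. For a dense open immersion into a connected geometrically unibranch stack, any connected finite \'etale cover of $X$ stays irreducible—and so connected—when restricted to $U$ (by Remark \ref{r.unib}, $X$ is irreducible, and dense subspaces of irreducible spaces are irreducible), so $\pi_1(U)\twoheadrightarrow\pi_1(X)$ and $W(U)\twoheadrightarrow W(X)$. Hence pullback along $j$ preserves and reflects simplicity of lisse Weil sheaves, $j^*\cF$ is simple, $j^*\cG$ is a $\sigma$-companion of it, and it suffices to prove $j^*\cG$ simple.

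Decompose $(j^*\cG)^{\rss}=\bigoplus_{i=1}^k\cG_i$ into simple summands. Since $E(j^*\cG)=\sigma(E(j^*\cF))$ by the very definition of a companion, $\sigma^{-1}$ is an embedding $E(j^*\cG)\to\Qlb$; I would extend it once and for all to $\bar\tau\colon\overline{E(j^*\cG)}\to\Qlb$ and set $\tau_i\colonequals\bar\tau|_{E(\cG_i)}$, using Corollary \ref{c.Esub} to know that each $E(\cG_i)$ is algebraic over $E(j^*\cG)$. Applying Theorem \ref{t.1} on the smooth stack $U$ to each simple $\cG_i$ with embedding $\tau_i$ yields a lisse $\tau_i$-companion $\cF_i$. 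Because $\bar\tau$ extends $\sigma^{-1}$, a direct trace computation gives
\[\sum_{i=1}^k\tr(\Fr_x,(\cF_i)_{\bar x})=\bar\tau(\tr(\Fr_x,(j^*\cG)_{\bar x}))=\tr(\Fr_x,(j^*\cF)_{\bar x})\]
for every $x\in U(\F_{q^n})$, so $\bigoplus_i\cF_i$ and $j^*\cF$ have the same trace function. Chebotarev on $U$ then forces $\bigoplus_i\cF_i^{\rss}\simeq(j^*\cF)^{\rss}=j^*\cF$. Since $j^*\cF$ is simple while each $\cF_i^{\rss}$ contributes at least one simple summand, we must have $k=1$ and $\cF_1$ of composition length one; hence $(j^*\cG)^{\rss}=\cG_1$ is simple, forcing $j^*\cG$ (and thus $\cG$) to be simple. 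The uniqueness clause for simple $\cF$ then follows by combining this simplicity with the first assertion applied to two putative $\sigma$-companions.

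The main obstacle I anticipate is the coefficient-field bookkeeping in the decomposition step: each $E(\cG_i)$ may properly contain $E(j^*\cG)$, so to guarantee that the $\tau_i$-companions $\cF_i$ assemble into a sheaf whose trace function matches $j^*\cF$, the $\tau_i$ must be mutually coherent, which is arranged by choosing the single common extension $\bar\tau$ of $\sigma^{-1}$. The remaining ingredients—reducing to $X$ connected and reduced, extracting a dense smooth open with separated diagonal, and the $\pi_1$-surjectivity for dense opens in geometrically unibranch stacks—are routine.
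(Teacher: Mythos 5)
Your argument is correct and follows essentially the same route as the paper's: shrink to a dense smooth open substack of separated diagonal (after the routine reduction to $X$ irreducible and reduced), decompose the putative companion into simple factors, apply Theorem \ref{t.1} in the reverse direction to each factor, and use Chebotarev together with the simplicity of $\cF$ to force a single nonzero factor. The paper handles the coefficient-field bookkeeping more briskly, by first reducing to $E_{\lambda'}=\Qlbp$ and then extending $\sigma$ to an isomorphism $\Qlb\simto\Qlbp$, whereas you keep $E_{\lambda'}$ general and instead extend $\sigma^{-1}$ compatibly over a common algebraic closure via Corollary \ref{c.Esub}; both variants are sound.
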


It is convenient to slightly extend terminology as follows. Given a Weil
$\Qlb$-sheaf $\cF$ on a stack $X$ and an embedding $\sigma\colon E'\to
E_{\lambda'}$ where $E'$ is an extension of $E(\cF)$, we will refer to
$(\sigma\res E(\cF))$-companions of $\cF$ simply as $\sigma$-companions.

\begin{proof}
The first assertion follows from Chebotarev's density theorem (see
Proposition \ref{p.inj} below). For the second assertion, it suffices to
show the simplicity, as the uniqueness then follows from the first
assertion. Up to replacing $X$ by a dense substack, we may assume that $X$
is smooth and of separated diagonal (Lemma \ref{l.sep}). We may further
assume that $E_{\lambda'}=\overline{\Q_{\ell'}}$. Extend $\sigma$ to an
isomorphism $\Qlb\simto \Qlbp$, which we still denote by $\sigma$. Let
$\cF'$ be a lisse $\sigma$-companion of $\cF$. Up to replacing $\cF'$ by its
semisimplification, we may assume $\cF'\simeq \bigoplus_i \cF'_i$, with each
$\cF'_i$ simple. By Theorem \ref{t.1}, there exists a
$\sigma^{-1}$-companion (i.e.\ $(\sigma^{-1}\res E(\cF'_i))$-companion)
$\cF_i$ of $\cF'_i$. Then $\cF$ is the semisimplification of
$\bigoplus_i\cF_i$, since they are both $\sigma^{-1}$-companions of $\cF'$.
Thus $\cF_i=0$ for all but one $i$, and the same holds for $\cF'_i$.
Therefore, $\cF'$ is simple.
\end{proof}

\begin{proof}[Proof of Theorem \ref{t.1}]
By Corollary \ref{c.Esub}, we may assume $X$ irreducible and $\cF$ simple.
In this case, we show in addition to the statements of the theorem, that any
lisse $\sigma$-companion $\cF'$ is punctually $\iota'$-pure for any
embedding $\iota'\colon \Qlbp\to \C$. By Proposition \ref{p.comp} and
Proposition \ref{p.Eopen} (or Corollary \ref{c.finite2}), we may shrink $X$.
Applying Lemma \ref{l.Behrend} and Remark \ref{r.gerbe} (or Corollary
\ref{c.finite}), we reduce to the case where $X$ is a Deligne-Mumford stack.
Up to shrinking $X$, we may further assume that $X=[Y/G]$, where $G$ is a
finite group acting on an affine scheme $Y$. Choose an embedding $G\to
\GL_m$. Consider the embedding $\GL_m\to \A^{m^2}$, which is equivariant
under the action of $\GL_m$. Let $p\colon Z=[Y\times \A^{m^2}/G]\to [Y/G]$
be the projection and let $s$ be the zero section. Note that $Z$ is smooth.
Since $s^*p^*\cF\simeq \cF$, $p^*\cF$ is simple. It suffices to show the
assertions for $(Z,p^*\cF)$. Indeed, if $\cG'$ is a lisse $\sigma$-companion
of $p^*\cF$, then $\cF'=s^*\cG'$ is a $\sigma$-companion of $s^*p^*\cF\simeq
\cF$. Applying Propositions \ref{p.comp} and \ref{p.Eopen} to the dense open
subscheme $[Y\times \GL_m/G]$ of $Z$, we are reduced to the case of schemes.
In this case, the existence of lisse companions follows from Drinfeld's
theorem (\cite[Theorem 1.1, Section 1.2]{Dr} applied to a twist $\cF^{(a)}$
of $\cF$ such that $\det(\cF^{(a)})$ has finite order). Moreover, if $\cF'$
is a lisse $\sigma$-companion of $\cF$, then $\cF'$ is simple by Proposition
\ref{p.simple} (which can be applied, because Proposition \ref{p.simple} for
schemes depends only on the known existence of lisse companions on smooth
schemes), hence punctually $\iota'$-pure.
\end{proof}

\begin{definition}
Let $f\colon X\to Y$ be a morphism of stacks. We say that $f$ \emph{creates
lisse companions} if for every lisse Weil $\Qlb$-sheaf $\cG$ on $Y$ and
every embedding $\sigma\colon E(\cG)\to E_{\lambda'}$ such that $f^*\cG$
admits a $\sigma$-companion, $\cG$ admits a $\sigma$-companion.
\end{definition}

Note that we do \emph{not} ask for the existence of a companion $\cG'$ of
$\cG$ such that $f^*\cG'$ is isomorphic to a given companion of $f^*\cG$.
Morphisms creating lisse companions are stable under composition. If
$X\xrightarrow{f} Y\xrightarrow{g} Z$ is a sequence of morphisms of stacks,
and if $gf$ creates lisse companions, then $g$ creates lisse companions.

\begin{prop}\label{p.create}
Let $f\colon X\to Y$ be a morphism of stacks. Then $f$ creates lisse
companions if it satisfies \emph{any} of the following conditions:
\begin{enumerate}
\item $f$ is a proper universal homeomorphism.
\item $f\colon X=\coprod_i X_i \to Y$, where $(X_i)$ is a finite Zariski
    open cover and $Y$ is geometrically unibranch.
\end{enumerate}
\end{prop}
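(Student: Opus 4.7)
For (1), a proper universal homeomorphism induces an equivalence of \'etale sites and hence an equivalence $f^*\colon \Sh^W(Y,E_{\lambda'})\simto \Sh^W(X,E_{\lambda'})$ on (lisse) Weil $E_{\lambda'}$-sheaves (the base change $f\otimes_{\F_q}\Fqb$ is still a proper universal homeomorphism). A given $\sigma$-companion $\cH'$ of $f^*\cG$ therefore descends uniquely to $\cG'=(f^*)^{-1}\cH'$ on $Y$. To check that $\cG'$ is a $\sigma$-companion of $\cG$, note that for every $y\in Y(\F_{q^n})$ the fiber $X\times_Y\Spec(\F_{q^n})$ is a universal homeomorphism onto $\Spec(\F_{q^n})$, so (as $\F_{q^n}$ is perfect) its reduction is $\Spec(\F_{q^n})$ itself and yields a point $x\in X(\F_{q^n})$ over $y$ with $\Fr_x\mapsto\Fr_y$. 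The companion identity at $x$ transfers to $y$ since $\cG'_{\bar y}\simeq\cH'_{\bar x}$ as Frobenius modules.

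For (2), after refining I assume each $X_i$ is connected and $Y$ is connected; by Remark \ref{r.unib}, $Y$ is then irreducible, every $X_i$ is dense in $Y$, and every finite intersection $X_{i_0\dots i_n}$ is an irreducible (nonempty) geometrically unibranch open in $Y$. The fully faithfulness of $j^*$ on lisse Weil sheaves along each dense open immersion (Remark \ref{r.unib}) gives surjections of Weil groups onto $W(Y)$ from every such intersection, so pullback preserves simplicity. Since companions depend only on Frobenius traces, I replace $\cG$ and each $\cG'_i$ by their semisimplifications. Decomposing into isotypic components and using Proposition \ref{p.simple} (uniqueness of semisimple companions on geometrically unibranch stacks), the problem reduces to the case $\cG$ simple; then each $\cG'_i$ is simple and Proposition \ref{p.simple} gives, for each pair $i,j$, an isomorphism $\varphi_{ij}\colon\cG'_i|_{X_{ij}}\simto\cG'_j|_{X_{ij}}$ that is unique up to a scalar in $E_{\lambda'}^\times$.

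The $\varphi_{ij}$ need not satisfy the cocycle condition on triple overlaps: one has $\varphi_{jk}\varphi_{ij}=c_{ijk}\varphi_{ik}$ with $c_{ijk}\in E_{\lambda'}^\times$, giving a \v{C}ech $2$-cocycle for the cover $\{X_i\}$ with values in the constant sheaf $\underline{E_{\lambda'}^\times}$. The key point is that, $Y$ being irreducible, every finite intersection $X_{i_0\dots i_n}$ is nonempty and irreducible, so the global sections of $\underline{E_{\lambda'}^\times}$ on each such intersection are $E_{\lambda'}^\times$, and the \v{C}ech complex for the cover $\{X_i\}$ is the simplicial cochain complex of the full (contractible) simplex on the finite index set. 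Hence \v{C}ech cohomology vanishes in positive degrees and the cocycle $c$ is a coboundary; rescaling the $\varphi_{ij}$ by appropriate $t_{ij}\in E_{\lambda'}^\times$ yields honest descent data. Since each $X_i\hookrightarrow Y$ is universally open and $f$ is surjective, $f$ is universally submersive, and Lemma \ref{l.descent} together with Lemma \ref{l.lisse} produces a lisse Weil $E_{\lambda'}$-sheaf $\cG'$ on $Y$ with $\cG'|_{X_i}\simeq\cG'_i$. That $\cG'$ is a $\sigma$-companion of $\cG$ is then immediate: every $y\in Y(\F_{q^n})$ lies in some $X_i$, and the trace condition is checked inside $X_i$.

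The main obstacle in (2) is this cocycle problem: simple companions are unique only up to scalar, so gluing requires trivializing a \v{C}ech $2$-class with constant coefficients. The decisive ingredient is the irreducibility of $Y$, which forces the nerve of the cover to be contractible and the class therefore to vanish; all the other steps are essentially formal given the earlier results.
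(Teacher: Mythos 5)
Your proof of part (2) is essentially correct, but your proof of part (1) rests on a false claim.

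\textbf{Part (1).} The assertion that a proper universal homeomorphism of Artin stacks induces an equivalence on (lisse Weil) $E_{\lambda'}$-sheaves is not true. Topological invariance of the \'etale topos holds for integral (or, say, affine) universal homeomorphisms of schemes and algebraic spaces, but fails once nontrivial stabilizers are present. For a concrete counterexample, take $f\colon B(\Z/2)\to\Spec(\F_q)$. This is a proper universal homeomorphism (the diagonal is a finite torsor, and the underlying topological spaces of $B(\Z/2)$ and its base changes are single points), yet $\pi_1(B(\Z/2)\otimes_{\F_q}\Fqb)=\Z/2$ is nontrivial, so $f^*$ is the inflation functor along $\pi_1(B(\Z/2))\to\Gal(\Fqb/\F_q)$ and is far from essentially surjective. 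The paper avoids any such equivalence claim: it shows that the unit $\cG\to f_*f^*\cG$ is an isomorphism and the counit $b\colon f^*f_*\cF'\to\cF'$ is a monomorphism (by proper base change and geometric connectedness of fibers), and then \emph{uses the companion hypothesis} to show $b$ is an isomorphism, because $f^*f_*\cF'$ and $\cF'$ are both $\sigma$-companions of $f^*\cG$ and hence have the same rank at every point. It is precisely this last rank comparison that substitutes for the missing essential surjectivity of $f^*$, and it is the step your argument has no analogue for. Incidentally, your claim that the reduction of the fiber $X\times_Y\Spec(\F_{q^n})$ is $\Spec(\F_{q^n})$ is also incorrect for stacks (the reduced fiber is a gerbe, not a point); the existence of an $\F_{q^n}$-lift of $y$ comes instead from Lang's theorem (neutrality of gerbes over finite fields, cf.\ Remark \ref{r.gerbe}).

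\textbf{Part (2).} Your argument is correct but takes a genuinely different and longer route than the paper's. You reduce to $\cG$ simple, obtain gluing isomorphisms $\varphi_{ij}$ that are each unique up to a scalar (Schur plus Proposition \ref{p.simple}), and trivialize the resulting constant-coefficient \v{C}ech $2$-cocycle by observing that irreducibility of $Y$ makes every finite intersection nonempty and connected, so the nerve is the full simplex and $H^2$ vanishes. This works, and it isolates the ``obstruction'' clearly. The paper sidesteps the cocycle problem entirely: it passes to $U=\bigcap_iX_i$, takes a semisimple companion $\cG'_U$ of $j^*\cG$ there, and observes that $j_*\cG'_U$ restricts on each $X_i$ to $(\cF'_i)^{\rss}$ (using $j_{i*}j_i^*\simeq\id$ on lisse sheaves over the geometrically unibranch $X_i$), so $j_*\cG'_U$ is itself lisse and a companion. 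No descent and no cohomology vanishing is needed. Both approaches use the same inputs (Proposition \ref{p.simple}, irreducibility, geometric unibranchness), but the paper's is a one-step construction.
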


\begin{proof}
Let $\cG$ be a lisse Weil $\Qlb$-sheaf on $Y$, let $\sigma\colon E(\cG)\to
E_{\lambda'}$ be an embedding, and let $\cF'$ be a lisse $\sigma$-companion
of $f^*\cG$.

(1) Let us first note that for any proper morphism $f$ of stacks with
geometrically connected fibers, the adjunction map $a \colon \cG\to
f_*f^*\cG$ is an isomorphism and the adjunction map $b\colon f^*f_*\cF'\to
\cF'$ is a monomorphism. Indeed, by proper base change, the stalk of $a$ at
any geometric point $\bar y\to Y$ can be identified with the isomorphism
$\cG_{\bar y}\to \Gamma(X_{\bar y},\cG_{\bar y})$, and the stalk of $b$ at a
geometric point $\bar x\to X$ above $\bar y\to Y$ can be identified with the
injection $\Gamma(X_{\bar y},\cF'|X_{\bar y'})\to \cF'_{\bar x}$. Now let
$f$ be a proper universal homeomorphism. By Lemma \ref{l.push} below,
$f_*\cF'$ is a $\sigma$-companion of $f_*f^*\cG\simeq \cG$. It remains to
show that $f_*\cF'$ is lisse. We have seen that $b$ is a monomorphism. Both
$f^*f_*\cF'$ and $\cF'$ are $\sigma$-companions of $f^*\cG$. In other words,
for any $x\in X(\F_{q^n})$, the characteristic polynomials of $\Fr_y$ acting
on $(f^*f_*\cF')_{\bar x}$ and $\cF'_{\bar x}$ coincide. In particular,
$(f^*f_*\cF')_{\bar x}$ and $\cF'_{\bar x}$ have the same rank. Thus, $b$ is
an isomorphism. By Lemma \ref{l.lisse}, $f_*\cF'$ is lisse.

(2) We may assume $Y$ irreducible and each $X_i$ nonempty. Then $U=\bigcap_i
X_i$ is nonempty. Let $j\colon U\to Y$ and let $\cG'_U$ be a semisimple
lisse $\sigma$-companion of $j^*\cG$. Let $\cF'_i= \cF'\res X_i$. Then
$\cG'_U\simeq \cF'^{\rss}_i\res U$, so that $j_*\cG'_U\res X_i\simeq
\cF'^{\rss}_i$. Thus $j_*\cG'_U$ is a lisse $\sigma$-companion of $\cG$.
\end{proof}

\begin{lemma}\label{l.push}
Let $f\colon X\to Y$ be a proper universal homeomorphism of stacks. Let
$\cF$ be a $\Qlb$-sheaf on $X$ and let $\cF'$ be a $\sigma$-companion of
$\cF$, where $\sigma\colon E(\cF)\to E_{\lambda'}$ is an embedding. Then
$E(f_*\cF)\subseteq E(\cF)$ and $f_*\cF'$ is a $\sigma$-companion of
$f_*\cF$.
\end{lemma}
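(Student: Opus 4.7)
The plan is to compute, for each $y\in Y(\F_{q^n})$ with a compatible geometric point $\bar y$, the Frobenius trace $\tr(\Fr_y,(f_*\cF)_{\bar y})$ as a $\Q$-linear combination of the traces $\tr(\Fr_x,\cF_{\bar x})$ at the $\F_{q^n}$-points $x$ of the fiber $X_y$. Such an expression automatically lies in $E(\cF)$ and is intertwined by $\sigma$ with the analogous expression for $\cF'$, yielding both assertions of the lemma in one stroke.

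The first step is to show $R^if_*\cF=0$ for $i>0$, so that $Rf_*\cF\simeq f_*\cF$. By proper base change it suffices to prove $H^i(X_{\bar y},\cH)=0$ for $i>0$ and any $\Qlb$-sheaf $\cH$ on $X_{\bar y}$. Since $f$ is proper and a universal homeomorphism, $X_{\bar y}$ is proper over $\Spec\Fqb$ and has $|X_{\bar y}|$ a single point; its inertia, being a group scheme over a stack concentrated at one field-valued point, is flat, so $X_{\bar y}$ is a gerbe over its coarse moduli space $\Spec\Fqb$ (\cite[Tag 06QJ]{SP}). This gerbe is neutral (\cite[Corollary 6.4.2]{Behrend} or a direct argument over an algebraically closed field), so $X_{\bar y}\simeq BG_{\bar y}$ for a finite $\Fqb$-group scheme $G_{\bar y}$. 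The étale fundamental group of $BG_{\bar y}$ is the finite group $\pi_0(G_{\bar y})$, and semisimplicity of $\Qlb$-representations of a finite group yields the vanishing.

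With $Rf_*\cF=f_*\cF$ in hand, I would invoke the Grothendieck--Lefschetz trace formula for the proper morphism $X_y\to\Spec\F_{q^n}$ (Behrend in the Deligne--Mumford case, extended to general Artin stacks by Sun and Olsson) to get
\[
\tr(\Fr_y,(f_*\cF)_{\bar y})=\sum_{[x]}\frac{\tr(\Fr_x,\cF_{\bar x})}{|\Aut(x)|},
\]
the sum running over isomorphism classes $[x]$ of objects of the groupoid $X_y(\F_{q^n})$. Every summand has numerator in $E(\cF)$ and denominator a positive rational, so the left-hand side lies in $E(\cF)$; varying $y$ and $n$ gives $E(f_*\cF)\subseteq E(\cF)$. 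The same formula applied to $\cF'$, combined with the companion identity $\tr(\Fr_x,\cF'_{\bar x})=\sigma\tr(\Fr_x,\cF_{\bar x})$ and the fact that $\sigma|_\Q=\id$, yields $\tr(\Fr_y,(f_*\cF')_{\bar y})=\sigma\tr(\Fr_y,(f_*\cF)_{\bar y})$, so $f_*\cF'$ is a $\sigma$-companion of $f_*\cF$.

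The main obstacle is the vanishing of higher direct images; once that is established, the remainder is a short bookkeeping exercise with the trace formula. A secondary subtlety is that $G_{\bar y}$ can be non-reduced in positive characteristic, but since $\ell$ is invertible in $\Fqb$ the infinitesimal part $G^0_{\bar y}$ is étale-cohomologically trivial with $\Qlb$-coefficients, so only the finite étale quotient $\pi_0(G_{\bar y})$ contributes to the cohomology of $X_{\bar y}$ and the above argument goes through.
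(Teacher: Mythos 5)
Your proof is correct and arrives at the result by the route the paper mentions only parenthetically. The paper's primary argument reduces (by proper base change and passing to the reduction) to $Y$ a point, $X=BG$ for a group scheme $G$; it factors $f$ as the gerbe $g\colon BG\to B(G/G^0)$ followed by the finite morphism $h\colon B(G/G^0)\to Y$, handles $g$ sheaf-theoretically via Lemma~\ref{l.Behrend} and Remark~\ref{r.gerbe} (using $g^*g_*\cF\simeq\cF$), and then for $h$ invokes Gabber's theorem \cite[Proposition~5.8]{Zind}, citing the trace formula only as an alternative. You instead go all-in on the trace formula: you first establish the vanishing $R^if_*\cF=0$ for $i>0$ so that $(f_*\cF)_{\bar y}$ is the full alternating-sum trace, then apply the Lefschetz formula to express $\tr(\Fr_y,(f_*\cF)_{\bar y})$ as a $\Q$-linear combination of traces of $\cF$. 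Both ingredients (vanishing of higher direct images, and the trace formula for $B\Gamma$ with $\Gamma$ finite \'etale) are implicitly present in the paper's treatment of $h$, so your argument is a legitimate and slightly more self-contained variant; what the paper's factorization buys is avoiding the explicit cohomological vanishing computation and instead appealing to a black-box preservation theorem.

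One small point worth tightening: to invoke \cite[Tag~06QJ]{SP} and conclude that $X_{\bar y}$ is a gerbe you should, as the paper does, first pass to the reduction. The inertia of a non-reduced stack concentrated at one point need not be flat over it; the correct statement is that $(X_{\bar y})_{\red}\simeq BG_{\bar y}$ for a group scheme $G_{\bar y}$ over $\Fqb$, and the closed immersion $(X_{\bar y})_{\red}\to X_{\bar y}$ is a universal homeomorphism, hence induces an equivalence of \'etale topoi. You acknowledge the non-reducedness of $G_{\bar y}$, but the same issue arises one level up for $X_{\bar y}$ itself. With that reduction made explicit, the rest of your argument (fundamental group is $\pi_0(G_{\bar y})$, higher cohomology of a finite group with $\Qlb$-coefficients vanishes, trace formula with weights $1/\#\Aut(x)$) goes through.
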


\begin{proof}
By proper base change, we may assume that $Y=\Spec(\F_{q^n})$ is a point. We
may further assume that $X$ is reduced. In this case, $X=BG$ for a group
scheme $G$ over $Y$. Applying the proof of Lemma \ref{l.Behrend}, we get
$f=hg$, where $BG\xrightarrow{g} B(G/G^0)\xrightarrow{h} Y$. We have
$\cF\simeq g^*g_*\cF$ and $\cF'\simeq g^*g_*\cF'$. By Remark \ref{r.gerbe},
$E(g_*\cF)=E(\cF)$ and $g_*\cF'$ is a $\sigma$-companion of $g_*\cF$. We are
thus reduced to showing that $E(h_*-)\subseteq E(-)$ and that $h_*$
preserves $\sigma$-companions. For this case, we apply \cite[Proposition
5.8]{Zind} recalled as part of Theorem \ref{t.Gabber} below (or the trace
formula \cite[Theorem 4.2]{SunL}).
\end{proof}

\begin{proof}[Proof of Corollary \ref{c.moduli}]
By Proposition \ref{p.create} (2), we may assume that $X$ is the coarse
moduli space of a smooth stack $Y$ with finite inertia. It then suffices to
apply Proposition \ref{p.create} (1) to the proper universal homeomorphism
$f\colon Y\to X$ and Theorem \ref{t.1} to $Y$.
\end{proof}

In the rest of the section, we discuss companions of perverse sheaves and in
Grothendieck groups. For this, it is convenient to introduce perverse Weil
sheaves. Let $E_\lambda$ be an algebraic extension of $\Q_\ell$. A
\emph{perverse Weil $E_\lambda$-sheaf} on a stack $X$ is a perverse
$E_\lambda$-sheaf $\cP$ on $X\otimes_{\F_q} \Fqb$ equipped with an action of
the Weil group $W(\Fqb/\F_q)$ lifting the action of $W(\Fqb/\F_q)$ on
$X\otimes_{\F_q} \Fqb$. A morphism of perverse Weil $E_\lambda$-sheaves on
$X$ is a morphism of the underlying perverse $E_\lambda$-sheaves on
$X\otimes_{\F_q} \Fqb$ compatible with the action of $W(\Fqb/\F_q)$. As in
the case of schemes \cite[Proposition 5.1.2]{BBD} or $E_\lambda$-sheaves
(Remark \ref{r.ext}), we have a fully faithful functor
$\Perv^W(X,E_\lambda)\to \Perv(X,E_\lambda)$ and the essential image is
stable under extension. Remark \ref{r.Qlb} on extending scalars to $\Qlb$
still holds. The analogue of Proposition \ref{p.Weil} holds with the same
proof:
\[\Perv^W(X,\Qlb)\simeq\bigoplus_{a\in \Qlb^\times/\Zlb^\times} \Perv(X,\Qlb)^{(a)}.\]

We let $K^W_\lisse(X,E_\lambda)$ denote the Grothendieck group of
$\Sh_\lisse^W(X,E_\lambda)$, which is a free Abelian group generated by the
isomorphism classes of simple lisse Weil $E_\lambda$-sheaves on $X$. We let
$K^W(X,E_\lambda)$ denote the Grothendieck group of $\Sh^W(X,E_\lambda)$,
which is also the Grothendieck group of $\Perv^W(X,E_\lambda)$, and is a
free Abelian group generated by the isomorphism classes of simple perverse
Weil $E_\lambda$-sheaves on $X$. For a Weil sheaf or Weil perverse sheaf
$\cF$, we let $[\cF]$ denote its class in the Grothendieck group. We have a
commutative diagram
\[\xymatrix{K^W_\lisse(X,E_\lambda)\ar@{^{(}->}[d]\ar[r]& K^W(X,E_\lambda)\ar@{^{(}->}[d]\\
K^W_\lisse(X,\Qlb)\ar[r]^{s_X} & K^W(X,\Qlb)}
\]
of Abelian groups. That the vertical arrows are injections is standard (cf.\
\cite[page VIII.191, Th\'eor\`eme 1]{BAlg8}).

We have the following Chebotarev's density theorem, generalizing \cite[Lemma
4.1.4]{SZ}.

\begin{prop}\label{p.inj}
Let $X$ be a stack.
\begin{enumerate}
\item The homomorphism $t_X\colon K^W(X,\Qlb)\otimes_\Z \Qlb\to
    \Qlb^{\coprod_{n\ge 1} \lvert X(\F_{q^n})\rvert}$ sending $A$ to
    $t_X(A)\colon x\mapsto \tr(\Fr_x,A_{\bar x})$ is injective.
\item For $X$ irreducible and geometrically unibranch, the homomorphism
    $s_X\colon K^W_\lisse(X,\Qlb)\to K^W(X,\Qlb)$ is injective and the
    conjugates of the images of $\Fr_x$ in the fundamental group
    $\pi_1(X)$ form a dense subset.
\end{enumerate}
\end{prop}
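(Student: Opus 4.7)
The plan is to prove part (2) first and then deduce part (1) by Noetherian induction on supports.

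For the density assertion in part (2), the strategy is to reduce to the scheme case through a chain of morphisms, each inducing a surjection on fundamental groups, and then apply classical Chebotarev density. Since $X$ is geometrically unibranch, its normalization $\tilde X \to X$ is a universal homeomorphism (hence $\pi_1(\tilde X)\simto \pi_1(X)$), and for the normal stack $\tilde X$, the map $\pi_1(U) \to \pi_1(\tilde X)$ is surjective for every dense open $U$, so I may shrink freely. Applying Lemma~\ref{l.Behrend} and Remark~\ref{r.gerbe}, I reduce further to a Deligne--Mumford stack via a gerbe-like morphism, which has geometrically connected fibers and hence yields a $\pi_1$-surjection by Lemma~\ref{l.subm}. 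Using Lemma~\ref{l.sep} and \cite[Corollaire 6.1.1]{LMB}, I shrink so that the Deligne--Mumford stack is a global quotient $[Y/G]$ with $Y$ a quasi-projective scheme and $G$ a finite group. Finally, choosing an embedding $G\hookrightarrow \GL_m$, I replace $[Y/G]$ by the scheme $Y\wedge^G \GL_m$ via the associated $\GL_m$-torsor, invoking Lemma~\ref{l.subm} once more. Closed points lift compatibly with Frobenii at each stage, and since a continuous surjection of topological groups sends dense subsets to dense subsets, density in the scheme's $\pi_1$ transports back to $\pi_1(X)$; the scheme case is classical Chebotarev density.

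The injectivity of $s_X$ then follows via trace functions. By Remark~\ref{r.unib}, simple lisse Weil $\Qlb$-sheaves on $X$ correspond to irreducible continuous $\Qlb$-representations of $W(X,\bar x)$, and distinct such representations have linearly independent characters by standard representation theory. Since the Frobenii are dense in $\pi_1(X)$, and hence in $W(X)\subseteq \pi_1(X)$ with the subspace topology, the continuous characters of these representations are determined by, and linearly independent as, $\Qlb$-valued functions on $\coprod_{n\ge 1} \lvert X(\F_{q^n})\rvert$. Hence the trace-function map $K_\lisse^W(X,\Qlb)\otimes_\Z \Qlb \to \Qlb^{\coprod \lvert X(\F_{q^n})\rvert}$, which factors through $s_X\otimes \Qlb$, is injective, forcing $s_X$ itself to be injective (source and target being free abelian groups).

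For part (1), I argue by Noetherian induction on the union of supports. Writing a hypothetical nonzero kernel element as $\alpha=\sum_i c_i [A_i]$ with the $A_i$ pairwise distinct simple perverse Weil $\Qlb$-sheaves and $c_i\in\Qlb^\times$, I pick a maximal irreducible closed substack $Z$ among $\{\supp(A_i)\}_i$. I then choose a smooth irreducible dense open substack $V\subseteq Z$ disjoint from every $\supp(A_j)$ with $\supp(A_j)\neq Z$ and on which each $A_i$ with $\supp(A_i)=Z$ restricts (up to shift) to a simple lisse Weil sheaf; these restrictions are pairwise non-isomorphic since distinct simple perverse sheaves with the same support differ at the generic point. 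The vanishing of $t_X(\alpha)$ at points of $V$ yields a $\Qlb$-linear relation among the trace functions on $V$ of pairwise distinct simple lisse Weil sheaves, and part (2) applied to $V$ forces $c_i=0$ for all $i$ with $\supp(A_i)=Z$. Iterating over the remaining maximal supports and proceeding by Noetherian induction concludes the proof. The main obstacle is the density reduction in part (2): ensuring that at each stage (shrinking, Behrend d\'evissage, passage to a quotient stack, $\GL_m$-torsor) the $\pi_1$-surjectivity is preserved so that density transports correctly to the target.
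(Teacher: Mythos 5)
Your approach follows the same outline as the paper's — reduce to lisse classes on irreducible geometrically unibranch strata, then push down through Behrend's d\'evissage and a $\GL_m$-torsor — but the paper's proof of the reduction step is organised differently, and your version of it has a gap at the gerbe stage.

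First, a genuine difference of route: the paper stops the reduction at Deligne--Mumford stacks and cites the Chebotarev density theorem already proved for DM stacks in \cite[Lemmas~4.1.4, 4.1.5]{SZ}, rather than continuing all the way down to schemes. More importantly, the paper avoids the topological density-transfer argument entirely. Having produced a gerbe-like morphism $f\colon U\to Y$ with $Y$ Deligne--Mumford, it writes the hypothetical kernel element as $A=\sum_i a_i[\cF_i]$, uses Lemma~\ref{l.Behrend} to arrange $f^*f_*\cF_i\simeq\cF_i$, notes $t_Y(f_*A)=0$ by Remark~\ref{r.gerbe}, deduces $f_*A=0$ by Chebotarev on $Y$, and concludes $A=f^*f_*A=0$ from the adjunction. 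Your version instead proves density of Frobenii first and then deduces injectivity of $s_X$ from linear independence of characters; this is the same equivalence the paper cites from \cite[Corollary~27.13]{CR}, run in the opposite direction, and is fine in principle. Likewise your Noetherian induction on supports for part (1) is essentially the paper's identity $A=\sum_\alpha j_{\alpha!}j_\alpha^*A$ phrased recursively.

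The gap is at the gerbe step. Lemma~\ref{l.subm} gives a surjection $\pi_1(U)\to\pi_1(Y)$ for the gerbe-like morphism $U\to Y$, but this surjection goes from $U$ to the DM stack $Y$, which is the \emph{wrong} direction for what you need: after descending further to a scheme you have density of Frobenii in $\pi_1(Y)$, and you want density in $\pi_1(U)$. Your cited fact — a continuous surjection pushes dense sets forward — does not pull density back from target to source. What is true is that the \emph{preimage} of a dense set under an open continuous surjection is dense, but $f^{-1}\left(\{\text{Frobenii in }\pi_1(Y)\}\right)$ can be strictly larger than the set of Frobenius conjugates of $\pi_1(U)$ when the kernel of $\pi_1(U)\to\pi_1(Y)$ is nontrivial, so this does not give what you want. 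To close the gap you need the stronger statement that a gerbe-like morphism with geometrically connected stabilizers induces an \emph{isomorphism} on $\pi_1$ (which holds: finite \'etale covers of such a gerbe descend uniquely to $Y$, since connected stabilizers must act trivially on finite fibers), but that is not what Lemma~\ref{l.subm} gives and not what you invoke. The $\GL_m$-torsor step, by contrast, is in the right direction: there the scheme is the source and the stack is the target, so surjectivity of $\pi_1(Z)\to\pi_1([Y/G])$ does transport density downward. Either upgrade the gerbe step to an isomorphism on $\pi_1$, or adopt the paper's adjunction argument $A=f^*f_*A$, which sidesteps the issue.
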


Here as before $\lvert X(\F_{q^n})\rvert$ denotes the set of isomorphism
classes of the groupoid $X(\F_{q^n})$. The Frobenius traces are extended to
$K^W(X,\Qlb)\otimes_\Z \Qlb$ by linearity: for $A=\sum_i c_i [\cF_i]$ with
$c_i\in \Qlb$, $\tr(\Fr_x,A_{\bar x})=\sum_{i} c_i\tr(\Fr_x,(\cF_i)_{\bar
x})$.

\begin{proof}
We extend cohomological operations to $K^W(X,\Qlb)\otimes_\Z \Qlb$ by
linearity (cf.\ Remark \ref{r.sixop} below). Let $A\in \Ker(t_X)$. There
exists a stratification of $X$ by geometrically unibranch substacks
$j_\alpha\colon X_\alpha\to X$ such that for each $\alpha$, $j_\alpha^*A$
belongs to the image of $s_{X_\alpha}\otimes \Qlb$. We have
$t_{X_\alpha}(j_\alpha^* A)=0$ and $A=\sum_\alpha j_{\alpha !}j_\alpha^*A$
and for (1) it suffices to show $j_\alpha^*A=0$. Changing notation, it
suffices to show that for $X$ irreducible and geometrically unibranch,
$t_X(s_X\otimes \Qlb)$ is an injection. Note that this implies the
injectivity of $s_X$. (Alternatively we can apply the reduction in the proof
of \cite[Th\'eor\`eme 1.1.2]{LaumonTF}.)

For $x\in X(\F_{q^n})$ and $A\in K(X,\Qlb)^{(a)}$, $a\in \Qlb^\times$, the
reciprocal zeroes and roots of $\det(1-t\Fr_x,A_{\bar x})$ belong to
$a^n\Zlb^\times$. It follows that
\[t_X(K^W(X,\Qlb)\otimes_\Z \Qlb)=\bigoplus_{a\in \Qlb^\times/\Zlb^\times}t_X(K(X,\Qlb)^{(a)}\otimes_\Z
\Qlb).
\]
Thus it suffices to show that $t_X^\lisse\colonequals t_X(s_X\otimes \Qlb)$
is an injection on $K(X,\Qlb)\otimes_\Z \Qlb$. This is equivalent to the
density of the conjugates of $\Fr_x$ in the fundamental group $\pi_1(X)$
\cite[Corollary 27.13]{CR}.

Since for any nonempty open substack $U$ of $X$, $\pi_1(U)\to \pi_1(X)$ is a
surjection, we may shrink $X$. We reduce to the case of Deligne-Mumford
stacks as follows. Let $A=\sum_i c_i [\cF_i]$ be an element in the kernel of
$t_X^\lisse$ with $\cF_i\in \Sh_\lisse(X,\Qlb)$ and $c_i\in \Qlb$. We apply
Lemma \ref{l.Behrend} to $\bigoplus_i \cF_i$ to find, up to shrinking $X$, a
gerbe-like morphism $f\colon X\to Y$ where $Y$ is a Deligne-Mumford stack
such that $f^*f_*\cF_i\simeq \cF_i$ for all $i$. By Remark \ref{r.gerbe},
$t_Y^\lisse(f_* A)=0$, where $f_*A=\sum_{i} c_i [f_*\cF_i]$. Thus, by
Chebotarev's density theorem for Deligne-Mumford stacks \cite[Lemmas 4.1.4,
4.1.5]{SZ}, we have $f_* A=0$. Therefore, $A=f^*f_* A=0$.
\end{proof}

The definitions of $E(\cF)$ and $\sigma$-companions at the beginning of the
paper extend to elements of Grothendieck groups and to perverse Weil sheaves
as follows. Given $A\in K^W(X,\Qlb)$, we let $E(A)$ denote the subfield of
$\Qlb$ generated by $\tr(\Fr_x, A_{\bar{x}})$, where $x\in X(\F_{q^n})$ and
$n\ge 1$. Let $\sigma\colon E\to E_{\lambda'}$ be a field embedding, where
$E$ is an extension of $E(A)$. We say that $A'\in K^W(X,E_{\lambda'})$ is a
\emph{$\sigma$-companion} of $A$ if for all $x\in X(\F_{q^n})$ with $n\ge
1$, we have $\tr(\Fr_x,A'_{\bar{x}})=\sigma \tr(\Fr_x,A_{\bar{x}})$. For a
perverse Weil $\Qlb$-sheaf $\cP$, we write $E(\cP)=E([\cP])$. By a
\emph{perverse $\sigma$-companion} of $\cP$, we mean a perverse Weil
$E_{\lambda'}$-sheaf $\cP'$ such that $[\cP']$ is a $\sigma$-companion of
$[\cP]$. Proposition \ref{p.inj} implies that $\sigma$-companions in
Grothendieck groups are unique and perverse $\sigma$-companions are unique
up to semisimplification.

Let $K_\mot(X,\Qlb)$ denote the Grothendieck group of $\Perv_\mot(X,\Qlb)$
(see Remark \ref{r.Pervmot}), which is also the Grothendieck group of
$D^b_\mot(X,\Qlb)$. Then we have
\[K^W(X,\Qlb)\simeq \bigoplus_{a\in \Qlb^\times/W(q)} K_\mot(X,\Qlb)^{(a)}.\]

\begin{remark}\label{r.proj}\leavevmode
\begin{enumerate}
\item Let $A\in K^W(X,\Qlb)$ and $A'\in K^W(X,\Qlbp)$. Let $E\subseteq
    \Qlb$ and let $\sigma\colon E\to \Qlbp$ be an embedding. If for every
    embedding $\tau\colon \Qlb\to \Qlbp$ extending $\sigma$, $A'$ is a
    $\tau$-companion of $A$, then $E(A)\subseteq E$. Indeed, if $t\in
    \Qlb$ and $t'\in \Qlbp$ are such that for every $\tau$ extending
    $\sigma$, we have $\tau(t)=t'$, then $t\in E$.
\item Let $\pi_a\colon K^W(X,\Qlb)\to K_\mot(X,\Qlb)^{(a)}$ be the
    projection and let $\tau\colon \Qlb\to \Qlbp$ be an embedding. If $A'$
    is a $\tau$-companion of $A\in K^W(X,\Qlb)$, then $\pi_{\tau a} A'$ is
    a $\tau$-companion of $\pi_a A$. Indeed, for $x\in X(\F_{q^n})$, the
    set of reciprocal zeroes and roots of $\det(1-t\Fr_x,(\pi_a A)_{\bar
    x})$ is the intersection of $a^n W(q^n)$ and the set of reciprocal
    zeroes and roots of $\det(1-t\Fr_x,A_{\bar x})$, with the same
    multiplicities: if $\tr(\Fr_x, A_{\bar x})=\sum_\lambda m_\lambda
    \lambda$, then $\tr(\Fr_x, (\pi_a A)_{\bar x})=\sum_{\lambda\in a^n
    W(q^n)} m_\lambda \lambda$, so that
    \[\tau\tr(\Fr_x, (\pi_a A)_{\bar x})=\sum_{\lambda\in a^n W(q^n)}
    m_\lambda \tau\lambda=\tr(\Fr_x, (\pi_{\tau a} A')_{\bar x}).
    \]
\end{enumerate}
\end{remark}

\begin{prop}\label{p.alg}
Let $X$ be a stack and let $A\in K^W(X,\Qlb)$. The following conditions are
equivalent:
\begin{enumerate}
\item $E(A)$ is a number field.
\item $\tr(\Fr_x,A_{\bar x})$ is an algebraic number for all $n\ge 1$ and
    all $x\in X(\F_{q^n})$.
\item $A$ belongs to $K^W_\alg(X,\Qlb)\colonequals \bigoplus_{a\in
    \overline{\Q}^\times/W(q)} K_\mot(X,\Qlb)^{(a)}$.
\end{enumerate}
\end{prop}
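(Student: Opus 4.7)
The plan is to deduce $(1) \Leftrightarrow (2)$ and $(3) \Rightarrow (2)$ from Theorem~\ref{t.gen} and the definition of $K^W_\alg(X,\Qlb)$, and then to reduce the nontrivial implication $(2) \Rightarrow (3)$ to a pointwise rational-generating-function argument coupled with a separation lemma for Frobenius eigenvalue classes.

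The implication $(3) \Rightarrow (2)$ is immediate: every element of $K_\mot(X,\Qlb)^{(a)}$ is a $\Z$-linear combination of classes $[K^{(a)}]$ with $K$ weakly motivic, so at any $x \in X(\F_{q^n})$ the Frobenius eigenvalues of the virtual representation $A_{\bar x}$ lie in $a^n W(q^n)$; when $a \in \overline{\Q}^\times$ this set is contained in $\overline{\Q}$, so every trace of an element of $K^W_\alg(X,\Qlb)$ is algebraic. For $(1) \Leftrightarrow (2)$, write $A = \sum n_i [\cF_i]$ as a finite $\Z$-linear combination of simple Weil sheaves; by Theorem~\ref{t.gen} each $E(\cF_i)$ is finitely generated over $\Q$, so $E(A)$, being a subfield of their compositum, is itself finitely generated over $\Q$ (any subfield of a finitely generated extension of $\Q$ is finitely generated). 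A finitely generated extension of $\Q$ is a number field precisely when it is contained in $\overline{\Q}$, i.e.\ precisely when its Frobenius trace generators are all algebraic.

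For the main implication $(2) \Rightarrow (3)$, write $A = \sum_{c \in S} \pi_c A$ using the finite-support decomposition over $\Qlb^\times/W(q)$ and choose representatives $a_c \in \Qlb^\times$; the goal is to show $\pi_c A = 0$ whenever $a_c$ is transcendental over $\Q$ (equivalently, whenever the class $c$ is not in $\overline{\Q}^\times/W(q)$). I first establish a separation lemma: if $\beta \in \Qlb^\times$ and $\beta \notin W(q)$, then $\beta^n \notin W(q^n)$ for every $n \ge 1$, since $\beta^n \in W(q^n) \subset \overline{\Q}$ forces $\beta$ to be algebraic and taking $n$-th roots of the absolute-value constraints defining $W(q^n)$ places $\beta$ in $W(q)$. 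Consequently the sets $a_c^n W(q^n)$ are pairwise disjoint in $\Qlb^\times$ as $c$ ranges over distinct classes, so the generalized eigenspace description in Remark~\ref{r.proj}~(2) partitions the Frobenius eigenvalues of $A_{\bar x}$ cleanly into the contributions of the $\pi_c A$.

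The core pointwise step runs as follows. Fix $x \in X(\F_{q^n})$; for each $m \ge 1$ the composition $\Spec(\F_{q^{nm}}) \to \Spec(\F_{q^n}) \xrightarrow{x} X$ produces a point $x^{(m)} \in X(\F_{q^{nm}})$ with $\Fr_{x^{(m)}} = \Fr_x^m$, so the hypothesis gives $\tr(\Fr_x^m, A_{\bar x}) \in \overline{\Q}$ for every $m \ge 1$. Writing $e_\alpha \in \Z$ for the signed multiplicity of $\alpha \in \Qlb^\times$ as an eigenvalue of $\Fr_x$ on the virtual representation $A_{\bar x}$, the generating function
\[
F(T) \;=\; \sum_{m \ge 0} \tr(\Fr_x^m, A_{\bar x})\, T^m \;=\; \sum_{\alpha} \frac{e_\alpha}{1 - \alpha T}
\]
is a rational function of bounded degree in $\Qlb(T)$ whose Taylor coefficients all lie in $\overline{\Q}$; by Pad\'e uniqueness (equivalently, by solving finitely many linear equations over $\overline{\Q}$ to recover its denominator) we have $F(T) \in \overline{\Q}(T)$, so its poles $1/\alpha$ with $e_\alpha \ne 0$ are algebraic, forcing $\alpha \in \overline{\Q}$. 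For $c \in S$ with $a_c$ transcendental, each eigenvalue of $\Fr_x$ on $(\pi_c A)_{\bar x}$ lies in the transcendental set $a_c^n W(q^n)$ and, by the disjointness, has the same signed multiplicity in $A_{\bar x}$ and in $(\pi_c A)_{\bar x}$; the generating-function step forces that multiplicity to vanish, so $(\pi_c A)_{\bar x}$ is the zero virtual representation at every $x$, and Proposition~\ref{p.inj}~(1) yields $\pi_c A = 0$. The main obstacle is the rationality step, which requires both that the finite sum $F(T)$ genuinely be a rational function of bounded degree (it is, as a finite $\Z$-linear combination of simple poles) and that the recovery of its denominator from $\overline{\Q}$-valued Taylor data produce a polynomial with algebraic roots.
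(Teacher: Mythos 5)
Your proof is correct and follows essentially the same route as the paper: the easy implications are handled identically, and for $(2)\Rightarrow(3)$ both arguments boil down to the observation that the char-polynomial data of $A$ at $x$ lies in $\overline{\Q}(t)$ (you phrase this via the generating function $F(T)=\sum_m\tr(\Fr_x^m,A_{\bar x})T^m$, the paper via $\det(1-t\Fr_x,A_{\bar x})$ — equivalent packaging), while the eigenvalues coming from classes $a\notin\overline{\Q}^\times W(q)$ are transcendental, forcing those contributions to vanish and then invoking Proposition~\ref{p.inj}. The only real differences are cosmetic: you prove and use the finer class-level disjointness ($a_c^nW(q^n)$ pairwise disjoint) and argue component-by-component, whereas the paper lumps all non-algebraic classes into a single $C$ and only needs the coarser algebraic/transcendental dichotomy; and you establish $(1)\Leftrightarrow(2)$ directly via finite generation of $E(A)$ (Theorem~\ref{t.gen}), while the paper closes the cycle through $(2)\Rightarrow(3)\Rightarrow(1)$.
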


Thus, if we identify $K^W(X,\Qlb)$ with its image under $t_X$, then
\[K^W_\alg(X,\Qlb)=\Qb^{\coprod_{n\ge 1}\lvert X(\F_{q^n})\rvert}\cap K^W(X,\Qlb).\]

\begin{proof}
It is clear that (1) implies (2). By Theorem \ref{t.gen}, (3) implies (1).
Now assume that (2) holds. Let $A=B+C$, where $B$ is the projection of $A$
in $K^W_\alg(X,\Qlb)$. Since $\det(1-t\Fr_x,A_{\bar x})\in
\overline{\Q}(t)$, we have $\det(1-t\Fr_x,C_{\bar x})=1$, so that
$\tr(\Fr_x^m,C_{\bar x})=0$ for $m\ge 1$. Thus $C=0$ by Proposition
\ref{p.inj}.
\end{proof}

For $w\in \Z$, let $K^W_w(X,\Qlb)$ denote the Grothendieck group of perverse
Weil $\Qlb$-sheaves pure of weight $w$. We have, by Remark \ref{r.mixed},
\[K^W_{\mix}(X,\Qlb)\colonequals \bigoplus_{a\in M(q)/W(q)} K_\mot(X,\Qlb)^{(a)}=\bigoplus_{w\in \Z} K^W_w(X,\Qlb).\]

\begin{remark}\label{r.sixop}
Let $f\colon X\to Y$ be a morphism of stacks, we have (bi)linear maps
\begin{gather*}
p_w\colon K^W_\mix(X,\Qlb)\to K^W_w(X,\Qlb),\\
-\otimes-,\ \cHom(-,-)\colon K^W(X,\Qlb)\times K^W(X,\Qlb)\to K^W(X,\Qlb),\\
D_X\colon K^W(X,\Qlb)\to K^W(X,\Qlb),\\
f^*,f^!\colon
K^W(Y,\Qlb)\to K^W(X,\Qlb),
\end{gather*}
where $p_w$ is the projection, $w\in \Z$. If $f$ is relatively
Deligne-Mumford, then we have linear maps
\[f_*,f_!\colon K^W(X,\Qlb)\to K^W(Y,\Qlb).\]
For an immersion of stacks $f\colon X\to Y$, the middle extension functor
$f_{!*}\colon \Perv^W(X,\Qlb)\to \Perv^W(Y,\Qlb)$ is not exact in general.
We define a linear map
\[f_{!*}\colon K^W(X,\Qlb)\to K^W(Y,\Qlb)\]
such that $f_{!*}[\cP]=[f_{!*}\cP]$ for $\cP\in \Perv^W(X,\Qlb)$ semisimple.
\end{remark}

The definition of $f_{!*}$ on Grothendieck groups is further justified by
the following fact. Let $\iota\colon \Qlb\to \C$ be an embedding and let
$w\in \R$. We let $\Perv^W_{\iota,\{w,w+1\}}(X,\Qlb)$ denote the category of
perverse Weil $\Qlb$-sheaves on $X$, $\iota$-mixed of weights $w$ and $w+1$.
Then $f_{!*}[\cP]=[f_{!*}\cP]$ for $\cP\in
\Perv^W_{\iota,\{w,w+1\}}(X,\Qlb)$ by the following immediate extension from
the case of schemes (\cite[Corollaire 2.10]{M2}, \cite[Lemma 4.1.8]{SZ}).

\begin{lemma}
Let $f\colon X\to Y$ be an immersion of stacks. The functor
\[f_{!*}\colon \Perv^W_{\iota,\{w,w+1\}}(X,\Qlb)\to\Perv^W_{\iota,\{w,w+1\}}(Y,\Qlb)\]
is exact.
\end{lemma}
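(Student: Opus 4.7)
The plan is to adapt the scheme-case argument of Morel \cite[Corollaire 2.10]{M2} and \cite[Lemma 4.1.8]{SZ} verbatim, using the tools on Artin stacks developed earlier: the perverse $t$-structure, the middle extension $j_{!*}$, and the weight theory — in particular, that $j_!$ preserves weights $\le w$ and $j_*$ preserves weights $\ge w$ — which is available by Sun \cite{Sun} and extends to Weil perverse sheaves via the analogue of Proposition \ref{p.Weil} noted in this section.

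Given a short exact sequence $0 \to A \to B \to C \to 0$ in $\Perv^W_{\iota,\{w,w+1\}}(X,\Qlb)$, the first step is to apply $j_!$ and $j_*$ to get long exact sequences in perverse cohomology. Since $j$ is a locally closed immersion, $j_!$ is right $t$-exact and $j_*$ is left $t$-exact, so only $\pH^{-1}j_!$, $\pH^0 j_!$, $\pH^0 j_*$, $\pH^1 j_*$ contribute. The natural transformation $\pH^0 j_! \to \pH^0 j_*$ has image $j_{!*}$ by definition. Surjectivity $j_{!*} B \twoheadrightarrow j_{!*} C$ is inherited from $\pH^0 j_! B \twoheadrightarrow \pH^0 j_! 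C$, and injectivity $j_{!*} A \hookrightarrow j_{!*} B$ from $\pH^0 j_* A \hookrightarrow \pH^0 j_* B$. The substance of the lemma is exactness in the middle.

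The crux is a weight argument. By the stack version of BBD 5.1.14, the boundary terms $\pH^{-1} j_! C$ and $\pH^1 j_* A$ are perverse sheaves supported off $j(X)$, with weights $\le w+1$ and $\ge w$ respectively. Likewise, the kernel of $\pH^0 j_! A \to \pH^0 j_* A$ is supported off $j(X)$ with weights $\le w+1$, and the cokernel is supported off $j(X)$ with weights $\ge w$. I would then run the snake lemma on the commutative ladder connecting the two long exact sequences; the assumption that weights lie in the narrow two-element set $\{w, w+1\}$ forces the relevant connecting maps between weight-$\le w$ sources and weight-$\ge w+1$ targets to vanish, giving the exactness of $0 \to j_{!*} A \to j_{!*} B \to j_{!*} C \to 0$.

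The main obstacle will be the precise weight bookkeeping at the boundary perverse cohomology degrees $-1$ and $+1$ — specifically, verifying that the interaction of perverse truncation with the weight filtration on stacks is strict enough for the constraint ``weights in $\{w, w+1\}$'' to kill the obstructions. Fortunately, no new geometric input beyond Sun's weight theory is needed: the reduction to a pure-weight SES via the weight filtration (where $j_{!*}$ of a pure sheaf is the image of $\pH^0 j_!$ in $\pH^0 j_*$, cf.\ Proposition \ref{p.purelisse} for the lisse analogue) together with the five lemma finishes the argument.
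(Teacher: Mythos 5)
The paper itself offers no argument here: it simply records the lemma as an ``immediate extension from the case of schemes'', citing \cite[Corollaire 2.10]{M2} and \cite[Lemma 4.1.8]{SZ}. Your proposal is therefore doing more than the paper — it is reconstructing the scheme-case argument — and the overall strategy (split off $\pH^{\pm 1}$, use weight estimates, snake lemma) is indeed the right one.

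However, the weight bounds you state for the boundary terms are not sharp enough for the argument to close, and as written they are internally inconsistent with your conclusion. You claim $\pH^{-1}j_! C$ has weights $\le w+1$ and $\pH^1 j_* A$ has weights $\ge w$; these bounds are true but overlap, so nothing forces the connecting maps to vanish. The correct (and necessary) bounds come from the standard fact that for a mixed complex $K$, ``$K$ has $\iota$-weights $\le v$'' is equivalent to ``$\pH^i K$ has $\iota$-weights $\le v+i$ for all $i$'' (and dually for $\ge$). Since $j_! C$ has weights $\le w+1$ and $Rj_* A$ has weights $\ge w$, one gets that $\pH^{-1}j_! C$ has weights $\le w$ and $\pH^1 Rj_* A$ has weights $\ge w+1$ — these ranges are disjoint, which is exactly what kills the connecting maps. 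You anticipate the need for ``weight-$\le w$ sources and weight-$\ge w+1$ targets'' later in the paragraph, but this contradicts the bounds you wrote two sentences earlier. The fix is simply to use the shifted bounds; the ``strictness of perverse truncation against the weight filtration'' you worry about is precisely this standard shifting fact, already available for stacks via \cite[Theorem 1.4]{SunL} and the purity theory in \cite{Sun}. With that correction the argument is sound and matches the cited references.
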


We have the following generalization of theorems of Gabber.

\begin{theorem}\label{t.Gabber}
Let $X$ and $Y$ be stacks with separated diagonal. Then the operations in
Remark \ref{r.sixop} preserve $E$ and $\sigma$-companions. More precisely,
for any operation $F$ in the list and $A\in K^W(X,\Qlb)$, $E(FA)\subseteq
E(A)$, and for any $\sigma\colon E(A)\to \Qlbp$ and any $\sigma$-companion
$A'$ of $A$, $FA'$ is a $\sigma$-companion of $FA$.
\end{theorem}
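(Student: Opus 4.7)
The plan is to reduce each case to the corresponding statement for schemes \cite[Proposition 5.8]{Zind} (which was already invoked in Lemma \ref{l.push}), by combining Chebotarev density with smooth presentations. By the injectivity of the trace map $t_X$ (Proposition \ref{p.inj}(1)), both $E(FA)\subseteq E(A)$ and the $\sigma$-companion claim reduce to verifying, for every $y\in Y(\F_{q^n})$ with $n\ge 1$, a $\sigma$-equivariant identity expressing $\tr(\Fr_y,(FA)_{\bar y})$ in terms of Frobenius traces of $A$.

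For the pointwise operations $\otimes$, $\cHom$, $f^*$, and $p_w$, the identities hold on stalks. Tensor product and internal Hom follow from the corresponding linear-algebra formulas; $p_w$ picks out the generalized $\Fr_y$-eigenspace with eigenvalues in $q^{wn/2}W_0(q^n)$, which is stable under $\sigma$ by the decomposition of $K^W_{\mix}$ recalled just before the theorem together with Remark \ref{r.proj}(2). For the dualities $D_X$ and $f^!$, I would pass to a smooth presentation $p\colon \tilde X\to X$ of relative dimension $d$; since $p^!\simeq p^*(d)[2d]$ and $D$ commutes with $p^*$ up to a Tate twist and shift, Lemma \ref{l.descent} together with the scheme case \cite[Proposition 5.8]{Zind} gives the result.

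For the pushforwards $f_*$ and $f_!$ along a relatively Deligne-Mumford morphism $f$, I would invoke Sun's Lefschetz-Verdier trace formula for DM stacks \cite[Theorem 4.2]{SunL}, as in the proof of Lemma \ref{l.push}: the trace of $\Fr_y$ on $(f_!A)_{\bar y}$ is a weighted sum over $x\in X_y(\F_{q^n})$ of $\tr(\Fr_x,A_{\bar x})/\#\Aut(x)$, with rational, hence $\sigma$-stable, automorphism factors; the analogous identity for $f_*$ follows by Grothendieck-Verdier duality. For the middle extension $f_{!*}$ along an immersion, the definition on Grothendieck groups requires handling only semisimple perverse sheaves; after fixing an embedding $\iota\colon \Qlb\to \C$, the exactness lemma on $\Perv^W_{\iota,\{w,w+1\}}$ immediately preceding the theorem together with the weight-wise decomposition of Remark \ref{r.Pervmot} splits each semisimple perverse Weil $\Qlb$-sheaf into $\iota$-pure summands, on which $f_{!*}$ is computed by a weight-truncated pushforward and the previous cases apply.

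The main obstacle will be verifying that Sun's trace formula applies uniformly on both sides of $f$ under the separated-diagonal hypothesis, and that the weight-truncation description of $f_{!*}$ on Grothendieck groups is compatible with the full Weil structure. Both require careful descent bookkeeping---smooth base change of each operation combined with Lemma \ref{l.descent}---but once in place, every case becomes an instance of the scheme-theoretic result \cite[Proposition 5.8]{Zind}.
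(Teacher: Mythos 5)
Your overall framework---reducing to pointwise trace identities via Proposition~\ref{p.inj}---is the right one, and your treatment of $p_w$ and $f_{!*}$ is close in spirit to the paper's. However, there are some real problems.

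First, you treat \cite[Proposition 5.8]{Zind} as a scheme-only result to be extended to stacks. It is in fact already a statement about Artin stacks (the paper explicitly cites it as such in the introduction), and the paper's proof simply quotes it verbatim for the six operations and Grothendieck--Verdier duality. Your re-derivation of those cases is therefore unnecessary, and it contains gaps of its own. You classify $\cHom$ among ``pointwise operations whose identities hold on stalks,'' but the stalk of $R\cHom(A,B)$ is not a function of the stalks of $A$ and $B$; in the Grothendieck group one reduces $\cHom$ to $D$ and $\otimes$ via the biduality formula $R\cHom(A,B)\simeq D(A\otimes DB)$, not via a stalkwise computation. More seriously, your reduction of $D_X$ and $f^!$ via a smooth presentation $p\colon\tilde X\to X$ relies on lifting each $y\in X(\F_{q^n})$ to an $\F_{q^n}$-point of $\tilde X$, which fails in general for a presentation (the fiber over $y$ is merely nonempty smooth, not guaranteed to have a rational point). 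What the paper uses---and what requires the separated-diagonal hypothesis---is the existence of smooth \emph{neighborhoods} \cite[Th\'eor\`eme 6.3]{LMB}, which factor any given rational point through a smooth morphism from a separated scheme. Without this, your reduction step does not go through, and you do not explain where the separated-diagonal hypothesis enters.

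Second, for the operations not covered by \cite[Proposition 5.8]{Zind}, namely $p_w$ and $f_{!*}$, your plan is roughly correct but too loose at the crucial point. The paper reduces both to separated schemes by smooth neighborhoods, quotes \cite[Proposition 2.7]{Zind} for $p_w$, and for $f_{!*}$ uses Remark~\ref{r.proj}(1) to pass to a $\tau$-companion statement, Remark~\ref{r.proj}(2) and $p_w$ to reduce to $A$ pure, and then invokes Gabber's theorem \cite[Theorem 3]{Gabber} for pure perverse sheaves on schemes. Your ``weight-truncated pushforward'' heuristic is essentially Gabber's argument, but that theorem is precisely the input that must be cited rather than re-derived; and your appeal to the exactness lemma on $\Perv^W_{\iota,\{w,w+1\}}$ fixes an embedding $\iota$, whereas the statement to be proved is embedding-independent, so some care is needed to see that the reduction does not depend on $\iota$. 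Finally, your use of Sun's trace formula for $f_*$ and $f_!$ is a legitimate alternative to citing \cite[Proposition 5.8]{Zind} for those two operations (it mirrors Lemma~\ref{l.push}), but since the cited proposition already covers them, it buys nothing here.
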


By biduality $D_X D_X A=A$, it follows that $E(D_X A)=E(A)$.

\begin{proof}
The assertion on the six operations and duality is the case over a finite
field of \cite[Proposition 5.8]{Zind} (extended to Weil sheaves by Remark
\ref{r.proj} or \cite[Remarque 4.16]{Zind}), generalizing a theorem of
Gabber \cite[Theorem 2]{Gabber}. For the assertion on $p_w$ and $f_{!*}$,
where $f$ is an open immersion, we reduce to the case of separated schemes
by the existence of smooth neighborhoods \cite[Th\'eor\`eme 6.3]{LMB}. The
assertion on $p_w$ is then \cite[Proposition 2.7]{Zind} (again extended to
Weil sheaves), a consequence of Gabber's theorem on $f_{!*}$ on pure
perverse sheaves. For $f_{!*}$, by Remark \ref{r.proj} (1), it suffices to
show that $f_{!*}$ on $K^W$ preserves $\tau$-companions, where $\tau\colon
\Qlb\to \Qlbp$ is an embedding extending $\sigma$. Let $A'\in
K^W(X,\overline{\Q_{\ell'}})$ be a $\tau$-companion of $A\in K^W(X,\Qlb)$.
We have $A=\sum_{a\in \Qlb^\times/W(q)}\pi_{a} A$, $A'=\sum_{a\in
\Qlb^\times/W(q)}\pi_{\tau a} A'$, and $\pi_{\tau a} A'$ is a
$\tau$-companion of $\pi_a A$ by Remark \ref{r.proj} (2). Thus, up to
replacing $A$ by $(\pi_a A)^{(1/a_0)}$ and $A'$ by $(\pi_{\tau a}
A')^{(1/\tau a_0)}$, where $a_0\in \Qlb^\times$ is a representative of $a$,
we may assume $A,A'\in K_\mot\subseteq K^W_\mix$. Similarly, since
$A=\sum_{w\in \Z}p_w A$, $A'=\sum_{w\in \Z}p_w A'$, and $p_w$ preserves
$\tau$-companions, up to replacing $A$ by $p_w A$ and $A'$ by $p_w A'$, we
are reduced to the case when $A,A'\in K$ are pure of weight $w$, which is
Gabber's theorem \cite[Theorem~3]{Gabber}.
\end{proof}

Due to cancellation in the alternating sum, the analogues of Corollaries
\ref{c.finite2} and \ref{c.finite} do not hold: $E(A)$ is not necessarily a
finite extension of $E(f^*A)$ for $f\colon X\to Y$ surjective even for $A\in
K^W_\lisse(Y,\Qlb)$. For example, for $f\colon \Spec(\F_{q^2})\to
\Spec(\F_q)$ and $A=\left[\Qlb^{(a)}\right]-\left[\Qlb^{(-a)}\right]$, we
have $E(A)=\Q(a)$ but $E(f^*A)=\Q$.

The analogue of Proposition \ref{p.Eopen} holds for $K^W_\lisse$ with the
same proof. Moreover, Corollary \ref{c.finite2} has the following refinement
under additional assumptions.

\begin{prop}\label{p.conn} Let $f\colon X\to Y$ be a
morphism of stacks of irreducible geometric fibers. Let $A\in K^W(Y,\Qlb)$.
Then $E(f^*A)=E(A)$. Moreover, if $A'$ is a Weil $\Qlbp$-sheaf on $Y$ such
that $f^*A'$ is a $\sigma$-companion of $f^*A$ for some embedding
$\sigma\colon E(A)\to \Qlbp$, then $A'$ is a $\sigma$-companion of $A$.
\end{prop}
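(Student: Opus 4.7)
The plan combines a geometric existence result for rational points with an elementary linear recurrence argument.

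First, I would establish that for each $y \in Y(\F_{q^n})$, the fiber $X_y$ admits an $\F_{q^{nk}}$-rational point for every $k \ge k_0 = k_0(y)$. By hypothesis $X_y$ is geometrically irreducible of finite presentation over $\F_{q^n}$. Shrinking $X_y$ using Lemmas \ref{l.sep} and \ref{l.Behrend} (applied to the trivial lisse sheaf), we may replace $X_y$ by a dense open $U$ admitting a gerbe-like morphism to a Deligne--Mumford stack $Y'$, still geometrically irreducible. The coarse moduli algebraic space $\tilde Y'$ of $Y'$ is then a geometrically irreducible algebraic space over $\F_{q^n}$, and classical Lang--Weil gives $\tilde Y'(\F_{q^{nk}}) \ne \emptyset$ for $k \ge k_0$; neutrality of gerbes over finite fields (Remark \ref{r.gerbe}) then lifts such points successively through $Y' \to \tilde Y'$ and $U \to Y'$, producing $x \in X(\F_{q^{nk}})$ above the base change $y^{(k)} \in Y(\F_{q^{nk}})$ of $y$. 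Consequently $a_k := \tr(\Fr_y^k, A_{\bar y}) = \tr(\Fr_x, (f^*A)_{\bar x}) \in E(f^*A)$ for all $k \ge k_0$.

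Second, I would exploit the linear recurrence satisfied by $(a_k)_{k \ge 1}$. Expanding $A$ in terms of simple perverse Weil sheaves, the sequence has the form $a_k = \sum_i m_i \lambda_i^k$ for finitely many distinct $\lambda_i \in \Qlb^\times$ (Frobenius eigenvalues at $\bar y$ with integer multiplicities $m_i$), hence satisfies a minimal linear recurrence of some degree $d$ with characteristic polynomial $\prod_i (t-\lambda_i)$; its constant term $(-1)^{d-1}\prod_i \lambda_i$ is nonzero, so the recurrence is reversible. The recurrence coefficients are the unique solution of the nondegenerate Hankel linear system built from $a_{k_0}, \ldots, a_{k_0+2d-1}$, so they lie in $\Q(a_{k_0}, a_{k_0+1}, \ldots) \subseteq E(f^*A)$. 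Downward induction then yields $a_k \in E(f^*A)$ for every $k \ge 1$; in particular $\tr(\Fr_y, A_{\bar y}) \in E(f^*A)$, and since $y$ was arbitrary, $E(A) \subseteq E(f^*A)$ (the reverse inclusion being trivial).

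For the moreover statement, set $c_k := \tr(\Fr_y^k, A'_{\bar y}) - \sigma \tr(\Fr_y^k, A_{\bar y})$. This is a finite $\Qlbp$-linear combination of geometric sequences $\nu^k$ with distinct nonzero ratios (Frobenius eigenvalues of $A'_{\bar y}$ combined with the $\sigma$-images of those of $A_{\bar y}$). The $\sigma$-companion hypothesis on $f^*A'$ together with the first step force $c_k = 0$ for every $k \ge k_0$, and linear independence of distinct nonzero geometric sequences then yields $c_k \equiv 0$; the case $k = 1$ gives $\tr(\Fr_y, A'_{\bar y}) = \sigma \tr(\Fr_y, A_{\bar y})$ as required. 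The main obstacle is the geometric step: rational points of $X_y$ must be available at \emph{every} sufficiently large level $k$, since rational points only along a proper arithmetic progression $r\Z$ would only recover $\tr(\Fr_y^r, A_{\bar y})$, insufficient to determine $\tr(\Fr_y, A_{\bar y})$.
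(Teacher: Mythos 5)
Your top-level strategy matches the paper exactly: first show that for each $y\in Y(\F_{q^m})$ the fiber $X_y$ has rational points over $\F_{q^{mk}}$ for \emph{every} sufficiently large $k$, then recover $\tr(\Fr_y,A_{\bar y})$ from the tail $(\tr(\Fr_y^k,A_{\bar y}))_{k\ge k_0}$ by a linear-recurrence argument over the field $E(f^*A)$ that the tail generates; your closing remark that a proper arithmetic progression would not suffice is precisely the right point. The two places you differ from the paper are in the implementation. For the linear algebra, the paper simply cites a ``recover $\tr(F)$ from $\tr(F^n)$, $n\ge N$'' lemma (\cite[Section 1]{M2}, \cite[Lemma 8.1]{IllMisc}); your Hankel/reversible-recurrence argument proves essentially the same thing directly, which is fine. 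For the Lang--Weil step, the paper proves the lemma (geometrically irreducible stacks have $\F_{q^n}$-points for all $n\gg 0$) in one stroke, directly on the stack, via the trace formula and the weight bound $H^{2d-j}_c$ of weights $\le 2d-\tfrac{j}{2}$ from \cite[Theorems 1.4, 4.2]{SunL}, so that the top weight term $q^{dn}$ dominates. Your route instead reduces to classical Lang--Weil for algebraic spaces through a chain of gerbe-like maps and coarse moduli spaces; this can be made to work, but as written it is imprecise: after applying Lemma \ref{l.Behrend} the resulting Deligne--Mumford stack $Y'$ need not have finite inertia, so it need not admit a coarse moduli space, and even when it does the map $Y'\to\tilde Y'$ is not a gerbe, so ``neutrality of gerbes over finite fields'' does not directly lift points along it. You would need one more shrinking step (pass to a dense open of $Y'$ with flat inertia, which \emph{is} a gerbe over an algebraic space by \cite[Tag 06QJ]{SP}) to make this lift legitimate. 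With that repair your argument is correct; the paper's direct trace-formula proof of the Lang--Weil lemma for stacks is simply cleaner and avoids the d\'evissage.
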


\begin{proof}
Recall that for any linear operator $F$ on a finite-dimensional
$(\Z/2\Z)$-graded vector space and any $N$, $\tr(F)$ can be recovered from
the numbers $\tr(F^n)$, $n\ge N$ linearly with coefficients in the field
generated by the latter (\cite[Section 1]{M2}, \cite[Lemma 8.1, Remark 8.2
(3)]{IllMisc}). Thus it suffices to show that for any $y\in Y(\F_{q^m})$,
there exists $N\ge 1$ such that for every $n\ge N$, the image of $y$ in
$Y(\F_{q^{nm}})$ lifts to $X$. This follows from the lemma below.
\end{proof}

\begin{lemma}
Let $X$ be a geometrically irreducible stack over $\F_q$. Then there exists
an integer $N\ge 1$ such that $X$ admits an $\F_{q^n}$-point for every $n\ge
N$.
\end{lemma}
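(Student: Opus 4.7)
The plan is to follow the reduction strategy developed in the paper for Proposition \ref{p.twist} and Theorem \ref{t.1}: first shrink $X$ to a quotient stack by a finite group, then pass to a geometrically irreducible scheme via a $\GL_m$-torsor, and finally invoke the Lang-Weil estimates.

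For any dense open substack $U\subseteq X$ we have $U(\F_{q^n})\subseteq X(\F_{q^n})$, and $U$ is itself geometrically irreducible (irreducibility of $X\otimes_{\F_q}\Fqb$ passes to any nonempty open subspace), so I may freely replace $X$ by a dense open substack at every step. First I would apply Lemma \ref{l.sep} to arrange that $X$ has separated diagonal. Next, Lemma \ref{l.Behrend}, applied to any lisse sheaf (the constant one will do), produces after shrinking a gerbe-like morphism $X\to Y$ with $Y$ a Deligne-Mumford stack; since such a morphism is a smooth universal homeomorphism, $Y$ is still geometrically irreducible, and by Remark \ref{r.gerbe} every $\F_{q^n}$-point of $Y$ lifts to $X$, so it suffices to produce points on $Y$. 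Finally, by \cite[Corollaire 6.1.1]{LMB}, I may further shrink so that $Y\simeq [V/G]$ with $V$ an affine scheme of finite type over $\F_q$ and $G$ a finite group acting on $V$.

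Once $X=[V/G]$, I pick an embedding $G\hookrightarrow \GL_m$ and form the scheme $Z=V\wedge^G\GL_m=(V\times \GL_m)/G$. The projection $f\colon Z\to X$ is a $\GL_m$-torsor, hence smooth and surjective with geometrically irreducible fibers (as $\GL_m$ is geometrically irreducible), so $Z$ is a geometrically irreducible scheme of finite type over $\F_q$. The Lang-Weil estimates then give $Z(\F_{q^n})\neq \emptyset$ for all $n$ sufficiently large, and composing with $f$ yields $X(\F_{q^n})\neq \emptyset$ in the same range.

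There is no deep obstacle here; the only bookkeeping that requires some care is the preservation of geometric irreducibility through the reductions, which is routine given that gerbes and torsors with geometrically irreducible fibers preserve this property. The final scheme-theoretic input, Lang-Weil, is classical.
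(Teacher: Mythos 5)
Your proof is correct, but it takes a genuinely different route from the paper's. The paper argues directly on the stack: it uses $H^{2d}_c(X\otimes_{\F_q}\Fqb,\Qlb)\simeq\Qlb(-d)$, the weight bound from \cite[Theorem 1.4]{SunL} for $H^i_c$ with $i<2d$, the convergence of the (possibly infinite) sum of Frobenius eigenvalues from \cite[Theorem 4.2]{SunL}, and the trace formula, to show that the weighted point count $\sum_{x}\frac{1}{\#\Aut(x)}=q^{dn}+(\text{lower order})$ is positive for $n\gg0$; in effect it reproves a Lang--Weil estimate for stacks. You instead reduce to a geometrically irreducible scheme via the devissage machinery already in the paper (Lemmas \ref{l.sep} and \ref{l.Behrend}, passing to $[V/G]$ and then to the $\GL_m$-torsor $V\wedge^G\GL_m$) and invoke the classical Lang--Weil theorem. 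Your preservation claims are right: gerbe-like morphisms are universal homeomorphisms, and a $\GL_m$-torsor is smooth surjective with geometrically irreducible fibers, so both preserve geometric irreducibility. The paper's approach is shorter and self-contained and does not require reduction to schemes; yours is conceptually parallel to the reductions used for Proposition \ref{p.twist} and Theorem \ref{t.1} and defers to a classical scheme-theoretic result. One small point you gloss over: to ensure $Z=V\wedge^G\GL_m$ is a scheme (rather than merely an algebraic space), you want $V$ affine, which requires first replacing $V$ by a connected component and $G$ by its decomposition group, as the paper does in the proof of Proposition \ref{p.twist}.
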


\begin{proof}
Let $d$ be the dimension of $X$. Consider $H^i_c=H^i_c(X\otimes_{\F_q}
\Fqb,\Qlb)$. Then $H^{2d}_c\simeq \Qlb(-d)$, and for $j>0$, $H^{2d+j}_c=0$
and $H^{2d-j}_c$ has weights $\le 2d-\frac{j}{2}$ \cite[Theorem 1.4]{SunL}.
Let $\iota\colon \Qlb\to \C$ be an embedding. By \cite[Theorem 4.2]{SunL},
$M_n=\sum_\alpha \lvert \iota\alpha\rvert^n<\infty$, where $\alpha$ runs
through the multiset of eigenvalues of $\Fr_q$ acting on $H^{2d-j}_c$,
$j>0$. Since $M_n\le q^{(n-1)(d-\frac{1}{4})}M_1$, we have $M_n<q^{dn}$ for
$n\gg 0$. By the trace formula, we then have
\[\sum_{x\in \lvert X(\F_{q^n})\rvert}\frac{1}{\#\Aut(x)}=q^{dn}+\sum_\alpha (\pm \alpha^n)>0.\]
Here $\lvert X(\F_{q^n})\rvert$ denotes the set of isomorphism classes of
the groupoid $X(\F_{q^n})$.
\end{proof}

Finally, we deduce the existence of perverse companions and companions in
Grothendieck groups.

\begin{theorem}\label{t.perverse}
Let $X$ be a stack of separated diagonal. Let $\cP$ be a perverse Weil
$\Qlb$-sheaf on $X$. Then, for every embedding $\sigma\colon E(\cP)\to
\Qlbp$, $\cP$ admits a perverse $\sigma$-companion $\cP'$, unique up to
semisimplification. Moreover, if $E(\cP)$ is a number field, then there
exists a finite extension $E$ of $E(\cP)$ such that for every finite place
$\lambda'$ of $E$ not dividing $q$, $\cP$ admits a perverse
$\sigma_{\lambda'}$-companion. Here $\sigma_{\lambda'}\colon E(\cP)\to E \to
E_{\lambda'}$, and $E_{\lambda'}$ denotes the completion of $E$ at
$\lambda'$.
\end{theorem}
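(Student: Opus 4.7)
The plan is to reduce to Theorem \ref{t.1} via the standard structure theorem for simple perverse sheaves, transporting companionship across the middle extension functor by Theorem \ref{t.Gabber}, and reading off uniqueness from Proposition \ref{p.inj}.

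\emph{Uniqueness.} If $\cP'_1$ and $\cP'_2$ are perverse $\sigma$-companions of $\cP$, then $[\cP'_1]=[\cP'_2]$ in $K^W(X,\Qlbp)$, since both classes equal the $\sigma$-companion of $[\cP]$, which is unique by the injectivity of $t_X$ in Proposition \ref{p.inj}. Because $K^W(X,\Qlbp)$ is freely generated by the isomorphism classes of simple perverse Weil sheaves, this forces $\cP'_1$ and $\cP'_2$ to have isomorphic semisimplifications.

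\emph{Existence.} Since $\Perv^W(X,\Qlb)$ has finite length and since $\sigma$-companions in $K^W$ are additive in short exact sequences, a Jordan--H\"older d\'evissage reduces matters to the case where $\cP$ is simple: given perverse $\sigma$-companions $\cP'_k$ of the simple constituents $\cP_k$, the direct sum $\bigoplus_k \cP'_k$ is a perverse $\sigma$-companion of $\cP$. Using the perverse analogue of Proposition \ref{p.Weil} recalled in the excerpt, we may further assume that $\cP$ is an untwisted simple perverse $\Qlb$-sheaf. Then $\cP \simeq i_* j_{!*}(\cG[d])$, where $i\colon Z\hookrightarrow X$ is the irreducible closed substack supporting $\cP$, $j\colon U\hookrightarrow Z$ is a smooth dense open substack with separated diagonal (obtained by combining Lemma \ref{l.sep} with generic smoothness on a smooth atlas of $Z_{\red}$), and $\cG$ is a simple lisse Weil $\Qlb$-sheaf on $U$. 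Theorem \ref{t.1} applied to the smooth stack $U$ of separated diagonal produces a lisse $\sigma$-companion $\cG'$ of $\cG$, and setting $\cP' \colonequals i_* j_{!*}(\cG'[d])$ and invoking Theorem \ref{t.Gabber} for both $j_{!*}$ and the closed pushforward $i_*$ yields $[\cP'] = \sigma[\cP]$, so $\cP'$ is the desired perverse $\sigma$-companion.

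\emph{Number field part and main obstacle.} If $E(\cP)$ is a number field, then by Corollary \ref{c.Esub} each field $E(\cG_k)$ attached to a Jordan--H\"older constituent $\cP_k \simeq i_{k*} j_{k!*}(\cG_k[d_k])$ lies in a finite extension of $E(\cP)$ and is therefore itself a number field, so the second part of Theorem \ref{t.1} applies to each $\cG_k$ individually and produces a finite extension $E_k$ of $E(\cG_k)$ for which lisse $\sigma_{\lambda'}$-companions exist at every finite place $\lambda'\nmid q$ of $E_k$. Taking $E$ to be the compositum inside $\Qlb$ of $E(\cP)$ and of the finitely many $E_k$ yields a single finite extension of $E(\cP)$ such that, for every finite place $\lambda'\nmid q$ of $E$, assembling the constituent companions via $i_{k*} j_{k!*}$ and direct sum produces a perverse $\sigma_{\lambda'}$-companion of $\cP$. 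The main technical point I anticipate is verifying the structure theorem for simple perverse Weil sheaves on Artin stacks with separated diagonal --- namely, that every such $\cP$ has the form $i_* j_{!*}(\cG[d])$ with $U$ smooth and of separated diagonal --- since this is precisely the step bridging the perverse setting and Theorem \ref{t.1}, and it is where the hypothesis on the diagonal of $X$ is actually used.
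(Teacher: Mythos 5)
Your proof is essentially the paper's proof: reduce to simple $\cP$ via Jordan--H\"older, use the structure $\cP \simeq j_{!*}(\cF[d])$ with $\cF$ lisse on a smooth locally closed substack of separated diagonal, apply Theorem~\ref{t.1} to $\cF$, transport companionship through $j_{!*}$ via Theorem~\ref{t.Gabber}, and obtain uniqueness from Proposition~\ref{p.inj}. Two of your auxiliary steps are unnecessary: the untwisting via the perverse analogue of Proposition~\ref{p.Weil} (Theorem~\ref{t.1} already applies to Weil sheaves, and the twist parameter $a$ need not lie in $E(\cP)$, so the reduction is slightly delicate anyway), and the appeal to Lemma~\ref{l.sep} for finding $U$ (any locally closed substack of $X$ automatically has separated diagonal once $X$ does).
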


\begin{proof}
The uniqueness up to semisimplification follows from Chebotarev's density
theorem (Proposition \ref{p.inj}). For the existence of companion, we extend
$\sigma$ to an embedding $\Qlb\to \Qlbp$. If $E(\cP)$ is a number field,
then $E(\cP_i)$ is a number field for every simple factor $\cP_i$ of $\cP$
by Proposition \ref{p.alg}. Thus, for the existence of companion in both
assertions of the theorem, we may assume that $\cP$ is simple. Then $\cP$
has the form $j_{!*}(\cF[d])$ for $j\colon Y\to X$ an immersion with $Y$
smooth, $\cF$ lisse on $Y$, and $d$ the locally constant dimension function
on $Y$. The existence of companion follows from Theorem \ref{t.1} applied to
$\cF$ and the fact that $j_{!*}$ on Grothendieck groups preserves companions
(Theorem \ref{t.Gabber}).
\end{proof}

\begin{cor}
Let $X$ be a stack of separated diagonal. Let $\cP$ be a simple perverse
Weil $\Qlb$-sheaf on $X$. Then, for every embedding $\sigma\colon E(\cP)\to
\Qlbp$, there exists a unique perverse $\sigma$-companion $\cP'$. Moreover,
$\cP'$ is simple.
\end{cor}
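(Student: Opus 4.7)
The plan is to mimic the proof of Proposition \ref{p.simple} (the lisse analogue), substituting Theorem \ref{t.perverse} for Theorem \ref{t.1} throughout. Existence of $\cP'$ is already supplied by Theorem \ref{t.perverse}; what remains is simplicity, and then uniqueness on the nose (rather than merely up to semisimplification) is a formal consequence.

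First I would extend $\sigma$ to an isomorphism $\Qlb \simto \Qlbp$ (still denoted $\sigma$) so that the roles of $\ell$ and $\ell'$ become symmetric. Applying Theorem \ref{t.perverse}, I obtain a perverse $\sigma$-companion $\cP'$, and after replacing $\cP'$ by its semisimplification (which does not change its class in the Grothendieck group, and hence is still a $\sigma$-companion) I may write $\cP' \simeq \bigoplus_i \cP'_i$ with each $\cP'_i$ simple. By Corollary \ref{c.Esub} each $E(\cP'_i)$ is contained in a finite extension of $E(\cP')$, so Theorem \ref{t.perverse} (applied with the roles of $\ell, \ell'$ swapped) produces a perverse $\sigma^{-1}$-companion $\cP_i$ of each $\cP'_i$. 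Then $\bigoplus_i \cP_i$ is a $\sigma^{-1}$-companion of $\cP'$, while $\cP$ itself is another one (by construction of $\cP'$). Since $\cP$ is simple and $\sigma^{-1}$-companions are unique up to semisimplification by Theorem \ref{t.perverse}, the semisimplification of $\bigoplus_i \cP_i$ must equal $\cP$, forcing $\cP_i = 0$ for all but one index $i$. Applying $\sigma$ and uniqueness up to semisimplification again (this time to $\cP_i$ and $\cP'_i$), we conclude that $\cP'_i = 0$ for all but the same one index, i.e.\ $\cP'$ is simple.

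Once simplicity of $\cP'$ is established, the ``unique up to semisimplification'' clause of Theorem \ref{t.perverse} upgrades to uniqueness up to isomorphism, because a simple object coincides with its semisimplification: any two perverse $\sigma$-companions $\cP', \cP''$ of $\cP$ are both simple by what has just been shown, and both equal their common semisimplification.

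The only conceptual subtlety is that in the perverse setting the Grothendieck group statement (Theorem \ref{t.perverse}) returns a class rather than an actual perverse sheaf; but since we immediately replace the companion by its semisimplification $\bigoplus_i \cP'_i$, this is harmless. I expect no further obstacle: the argument is formally identical to the lisse case, and every ingredient it uses (existence and uniqueness up to semisimplification of perverse companions, Corollary \ref{c.Esub}) has already been established in the separated-diagonal setting that is being assumed.
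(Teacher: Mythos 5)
Your argument is correct and follows essentially the same route as the paper: both extend $\sigma$ to an isomorphism $\Qlb\simto\Qlbp$, decompose a semisimple companion $\cP'$ into simple factors $\cP'_i$, produce $\sigma^{-1}$-companions $\cP_i$ of those factors by Theorem \ref{t.perverse}, and conclude from simplicity of $\cP$ and uniqueness of companions up to semisimplification that only one index survives. (The appeal to Corollary \ref{c.Esub} is unnecessary, since $\sigma^{-1}$ is already an isomorphism and restricts to the required embedding of each $E(\cP'_i)$; but this does no harm.)
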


\begin{proof}
It suffices to show the simplicity. The proof is the same as the end of the
proof of Proposition \ref{p.simple}. We extend $\sigma$ to an isomorphism
$\Qlb\simto \Qlbp$. We may assume $\cP'\simeq \bigoplus_i\cP'_i$ with
$\cP'_i$ simple. Then for each $i$ there exists a $\cP_i$ such that $\cP'_i$
is the $\sigma$-companion of $\cP_i$. It follows that $\cP\simeq
\bigoplus_i\cP_i$, so that $\cP_i=0$ for all but one $i$, and the same holds
for $\cP'_i$.
\end{proof}

\begin{cor}\label{c.K}
Let $X$ be a stack of separated diagonal. Let $\sigma\colon \Qlb\simto
\Qlbp$ be an isomorphism. For any $A\in K(X,\Qlb)$, there exists a unique
$\sigma$-companion $A'$. The map $K(X,\Qlb)\to K(X,\Qlbp)$ sending $A$ to
its $\sigma$-companion $A'$ is an isomorphism. Moreover, if $E(A)$ is a
number field, then there exists a finite extension $E$ of $E(A)$ such that
for every finite place $\lambda'$ of $E$ not dividing $q$, $A$ admits a
perverse $\sigma_{\lambda'}$-companion, where $\sigma_{\lambda'}$ is as in
Theorem \ref{t.perverse}.
\end{cor}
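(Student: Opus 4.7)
The strategy is to reduce Corollary \ref{c.K} to Theorem \ref{t.perverse} by decomposing $A$ into simple perverse constituents. For uniqueness, if $A'_1, A'_2 \in K(X,\Qlbp)$ are two $\sigma$-companions of $A$, they have identical Frobenius traces at every $\F_{q^n}$-point, so by the injectivity statement of Proposition \ref{p.inj}(1), $A'_1 = A'_2$.

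For existence, express $A = \sum_i n_i [\cP_i]$ in $K(X,\Qlb)$ with $\cP_i$ simple perverse $\Qlb$-sheaves and $n_i \in \Z$. Each $\cP_i$, viewed as a perverse Weil $\Qlb$-sheaf, admits a perverse $\sigma$-companion $\cP'_i \in \Perv^W(X,\Qlbp)$ by Theorem \ref{t.perverse}. Set $A' \colonequals \sum_i n_i [\cP'_i] \in K^W(X,\Qlbp)$; it is manifestly a $\sigma$-companion of $A$. To show $A'$ actually lies in $K(X,\Qlbp) \subset K^W(X,\Qlbp)$, combine the decomposition $K^W(X,\Qlb) = \bigoplus_{a \in \Qlb^\times/\Zlb^\times} K(X,\Qlb)^{(a)}$ from Proposition \ref{p.Weil} with Remark \ref{r.proj}(2): the companion map respects the piece decomposition, and the $\Qlb$-sheaf piece $K(X,\Qlb) = K(X,\Qlb)^{(1)}$ maps to the $\Qlbp$-sheaf piece $K(X,\Qlbp) = K(X,\Qlbp)^{(1)}$. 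The bijectivity of $A \mapsto A'$ is then immediate by symmetry: the $\sigma^{-1}$-companion map $K(X,\Qlbp) \to K(X,\Qlb)$, furnished by applying the same construction to $\sigma^{-1}\colon \Qlbp \simto \Qlb$, is a two-sided inverse by uniqueness, and $\Z$-linearity of the map follows from linearity of Frobenius traces.

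For the integral refinement, assume $E(A)$ is a number field. By Proposition \ref{p.alg}, $A$ lies in $K^W_\alg(X,\Qlb) = \bigoplus_{a \in \overline{\Q}^\times/W(q)} K_\mot(X,\Qlb)^{(a)}$, so in the decomposition $A = \sum_i n_i [\cP_i]$ within this subgroup each $\cP_i$ has algebraic Frobenius eigenvalues; by Theorem \ref{t.gen} each $E(\cP_i)$ is then a number field. Apply the second assertion of Theorem \ref{t.perverse} to each $\cP_i$ to obtain a finite extension $E_i$ of $E(\cP_i)$ such that perverse $\sigma_{\lambda'_i}$-companions exist for every finite place $\lambda'_i \nmid q$ of $E_i$. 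Taking $E$ to be a common finite extension of $E(A)$ containing all the $E_i$, we conclude that for every finite place $\lambda'$ of $E$ not dividing $q$, the $\cP_i$'s admit simultaneous perverse $\sigma_{\lambda'}$-companions (via the induced places on each $E_i$), and summing gives the desired $\sigma_{\lambda'}$-companion of $A$.

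The main obstacle is the verification in the existence step that $A'$ lies in $K(X,\Qlbp)$ and not merely $K^W(X,\Qlbp)$; this requires carefully invoking the canonical piece decomposition of Proposition \ref{p.Weil} together with Remark \ref{r.proj}(2) to see how $\sigma$-companions transport the $\Qlb$-sheaf piece (characterized by $\ell$-adic unit Frobenius eigenvalues) to the $\Qlbp$-sheaf piece. A secondary subtle point is ensuring in the integral refinement that the simple constituents of $A$ have number-field coefficient fields; this hinges on the identification of the algebraic subgroup of $K(X,\Qlb)$ in Proposition \ref{p.alg} together with Theorem \ref{t.gen}.
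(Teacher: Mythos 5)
Your overall strategy matches the paper's: uniqueness from Chebotarev's density theorem (Proposition~\ref{p.inj}), existence by decomposing into simple perverse constituents and applying Theorem~\ref{t.perverse}, bijectivity via the $\sigma^{-1}$-companion, and the integral refinement via Proposition~\ref{p.alg} together with the last assertion of Theorem~\ref{t.perverse}.

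However, the step you flag as the ``main obstacle'' contains a genuine error in the proposed resolution. You claim that, via Proposition~\ref{p.Weil} and Remark~\ref{r.proj}~(2), the companion map sends the piece $K(X,\Qlb)^{(1)}$ (twists by $a\in\Zlb^\times$) into the piece $K(X,\Qlbp)^{(1)}$. This is false: $\sigma$ is an abstract field isomorphism, \emph{not} continuous, so it does not respect $\ell$-adic integrality, and in particular $\sigma(\Zlb^\times)\not\subseteq\Zlbp^\times$ in general. Concretely, Remark~\ref{r.proj}~(2) sends the summand indexed by $a\in\Qlb^\times/W(q)$ to the summand indexed by $\sigma a$, and when $a\in\Zlb^\times$ is, say, transcendental, $\sigma a$ need not be an $\ell'$-adic unit; already on $\Spec\F_q$ the sheaf $\Qlb^{(a)}$ with $a\in\Zlb^\times$ transcendental can have a $\sigma$-companion $\Qlbp^{(\sigma a)}$ that is a Weil sheaf but not a $\Qlbp$-sheaf. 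The worry you raise is in fact well-founded: if $K(X,\Qlb)$ meant the Grothendieck group of (non-Weil) $\Qlb$-sheaves, the statement would be false. The point you are missing is that in this part of the paper, $K(X,\Qlb)$ abbreviates the Grothendieck group $K^W(X,\Qlb)$ of \emph{Weil} $\Qlb$-sheaves (the same shorthand already appears in the proofs of Proposition~\ref{p.inj} and Theorem~\ref{t.Gabber}); the paper's own proof confirms this by invoking Proposition~\ref{p.alg}, whose hypothesis is $A\in K^W(X,\Qlb)$. With this reading, the question of whether $A'$ lands in the non-Weil piece does not arise, and the rest of your argument goes through as written.
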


Note that if $A=\sum_{\cP}n_{\cP}[\cP]$, where $\cP$ runs through
isomorphism classes of simple perverse $\Qlb$-sheaves, then
$A'=\sum_{\cP}n_{\cP}[\cP']$, where $\cP'$ is the perverse
$\sigma$-companion of $\cP$, is the $\sigma$-companion of $A$.

\begin{proof}
The existence of $\sigma$-companion follows from Theorem \ref{t.perverse} or
\ref{t.1} and the uniqueness follows from Chebotarev's density theorem. For
the second assertion, note that sending $A'$ to its $\sigma^{-1}$-companion
defines an inverse of the map. For the last assertion, note that
$A=[\cP]-[\cQ]$ with $E(\cP)$ and $E(\cQ)$ being number fields by
Proposition \ref{p.alg}, so that it suffices to apply the last assertion of
the theorem.
\end{proof}

\begin{remark}
Let $X$ be a stack of separated diagonal. The group of functions
$K^W(X,\C)\subseteq \C^{\coprod_{n\ge 1}\lvert X(\F_{q^n})\rvert}$ of the
form $\iota \circ t_X(A)$, where $A$ belongs to $K^W(X,\Qlb)$ and
$\iota\colon \Qlb\simto \C$ is an isomorphism, does not depend on the choice
of $\ell$ and $\iota$ by Corollary \ref{c.K}. Similarly, the subgroups
$K_\mot(X,\Qb)\subseteq K^W_\alg(X,\Qb)\subseteq \Qb^{\coprod_{n\ge 1}\lvert
X(\F_{q^n})\rvert}$, inverse images via an embedding $i\colon \Qb\to \Qlb$
of the corresponding subgroups of $t_X(K^W(X,\Qlb))$, do not depend on the
choice of $\ell$ and $i$ (cf.\ \cite[Corollary 1.6]{Dr}).  We have
\[K^W(X,\C)\simeq \bigoplus_{a\in \C^\times/\Qb^\times} K^W_\alg(X,\Qb)^{(a)},\quad K^W_\alg(X,\Qb)\simeq \bigoplus_{a\in \Qb^\times/W(q)} K_\mot(X,\Qb)^{(a)}.\]
\end{remark}

\begin{remark}
The support of a simple perverse Weil $\Qlb$-sheaf $\cP$ on a stack $X$
equals the maximal reduced closed substack $Y$ of $X$ such that
$\tr(\Fr_x,\cP_{\bar x})=0$ for all $n\ge 1$ and all $x\in (X-Y)(\F_{q^n})$
by Proposition \ref{p.inj}. Assume that $X$ has separated diagonal. Then the
perverse $\sigma$-companion $\cP'$ of $\cP$ has the same support as $\cP$.
Sun \cite{SunSupp} defines the open support of $\cP$ to be the maximal
smooth Zariski open of $Y$ on which $\cP$ is the shift of a lisse Weil
$\Qlb$-sheaf. As he observed, $\cP$ and $\cP'$ have the same open support by
Theorem \ref{t.1}.
\end{remark}

\section{Appendix: Structure of pure perverse sheaves}\label{s.4}

The goal of this appendix is to prove the geometric semisimplicity of pure
perverse sheaves (Theorem \ref{t.decomp}).

Let $\iota\colon \Qlb\to \C$ be an embedding. Let $X$ be a stack. Let $w\in
\R$ and let $K\in D(X,\Qlb)$. We say that $K$ has \emph{$\iota$-weights $\le
w$} if the $i$-th cohomology sheaf $\cH^i K$ of $K$ has punctual
$\iota$-weights $\le w+i$ for all $i$, and $K$ has \emph{$\iota$-weights
$\ge w$} if $D K$ has $\iota$-weights $\le -w$. We say that $K$ is
\emph{$\iota$-pure} of weight $w$ if it has $\iota$-weights $\le w$ and $\ge
w$.

\begin{theorem}\label{t.decomp}
Let $X$ be a stack and let $\cP$ be an $\iota$-pure perverse Weil
$\Qlb$-sheaf on $X$. Then the pullback of $\cP$ to $X\otimes_{\F_q} \Fqb$ is
semisimple.
\end{theorem}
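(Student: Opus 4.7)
The plan is to follow the BBD/Sun strategy, with the key new ingredient being the Behrend dévissage of Lemma \ref{l.Behrend} that sidesteps the affine-stabilizer hypothesis used by Sun. As in \cite[Théorème 5.3.8]{BBD} and its extension by Sun, an $\iota$-pure perverse Weil $\Qlb$-sheaf $\cP$ on $X$ is semisimple already in $\Perv^W(X,\Qlb)$ (the relevant $\Ext^1$-vanishing between pure perverse Weil sheaves of the same weight uses only the six operations and the weight bounds on stacks from Theorem \ref{t.2}(1) and Section \ref{s.1t}), so it is enough to treat the case where $\cP$ is simple. In that case $\cP\simeq (\iota_Z\circ j)_{!*}(\cF[d])$, where $\iota_Z\colon Z\hookrightarrow X$ is the closed immersion of the irreducible support $Z=\supp \cP$, $j\colon U\hookrightarrow Z$ is a smooth dense open substack, $d$ is the dimension function on $U$, and $\cF$ is a simple lisse Weil $\Qlb$-sheaf on $U$, automatically $\iota$-pure of weight $w-d$.

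The central new step is geometric semisimplicity of $\cF$. Shrinking $U$ and applying Lemma \ref{l.Behrend} to $\cF$ produces a gerbe-like morphism $f\colon U\to V$ to a Deligne--Mumford stack $V$ together with a lisse Weil $\Qlb$-sheaf $\cG$ on $V$ such that $f^{*}\cG\simeq \cF$. Since $f$ is smooth and surjective with geometrically connected fibers (Lemma \ref{l.subm}), $\cG$ is simple and $\iota$-pure of weight $w-d$; and since $V$ is Deligne--Mumford its stabilizers are finite, hence affine, so Sun's geometric semisimplicity of pure lisse Weil sheaves \cite[Theorem 2.1(iii)]{Sun} applies on $V$ and yields that $\cG_{\Fqb}$ is semisimple. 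Pullback by the smooth surjection $f_{\Fqb}$ preserves semisimplicity of lisse sheaves, so $\cF_{\Fqb}\simeq\bigoplus_k\cH_k$ with each $\cH_k$ simple lisse on $U_{\Fqb}$.

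Finally, each $\cH_k[d]$ is simple perverse on $U_{\Fqb}$, so its middle extension $(\bar\iota_Z\circ\bar j)_{!*}(\cH_k[d])$ along the base-changed immersions $\bar j\colon U_{\Fqb}\hookrightarrow Z_{\Fqb}$ and $\bar\iota_Z\colon Z_{\Fqb}\hookrightarrow X_{\Fqb}$ is simple perverse on $X_{\Fqb}$. Because the perverse $t$-structure and the middle-extension functor on Artin stacks are defined through the smooth topology and hence commute with the base change $\Spec\Fqb\to\Spec\F_q$, and because middle extension is characterized (even on Artin stacks) as the unique perverse extension with no nonzero subobject or quotient supported on the complement, the pullback $\cP_{\Fqb}$ must coincide with $\bigoplus_k(\bar\iota_Z\circ\bar j)_{!*}(\cH_k[d])$ and is therefore semisimple. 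The main obstacle I anticipate is the lisse-level step: verifying that the shrinkings imposed by Lemma \ref{l.Behrend} (first to reach a gerbe presentation, and implicitly to pass to the smooth locus of $Z$) really leave the original perverse sheaf $\cP$ unchanged as an intermediate extension, and that Sun's lisse-level semisimplicity transfers faithfully along the smooth gerbe $f_{\Fqb}$ without introducing extra automorphism issues coming from non-affine inertia; once this is checked, the assembly via middle extension is formal.
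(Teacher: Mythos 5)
The central gap is in your reduction to simple $\cP$, which you dismiss in a parenthetical; in fact that reduction is exactly where the appendix's new work lies, and the obstacle you anticipate at the end (the lisse-level shrinkings) is not the real one. First, the claim that a pure perverse Weil $\Qlb$-sheaf is already semisimple in $\Perv^W(X,\Qlb)$ is false: on $X=\Spec(\F_q)$, the unipotent $\cE_n$ is pure perverse of weight $0$ and indecomposable for $n\ge 2$. What is actually true, and what the d\'evissage needs, is that the canonical map $\Ext^1_{\Perv^W}(\cP,\cQ)\to \Ext^1(\cP_{\Fqb},\cQ_{\Fqb})$ has zero image for $\cP,\cQ$ pure of the same $\iota$-weight (this is Corollary \ref{c.wt}). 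Second, your assertion that this vanishing ``uses only the six operations and the weight bounds on stacks from Theorem \ref{t.2}(1) and Section \ref{s.1t}'' is precisely where the argument breaks: on a general Artin stack the standard estimate ``$R^i\pi_*K$ has $\iota$-weights $\ge w+i$'' \emph{fails} — for $X=BA$ with $A$ an abelian variety the correct exponent is $\lceil i/2\rceil$ — so the splitting does not follow from the cited results. The whole point of the appendix is the new, weaker bound $\ge w+\lceil i/2\rceil$ for $R^i\pi_*$ in the presence of non-affine stabilizers, together with the observation that $\lceil i/2\rceil\ge 1$ for $i>0$, which kills the Galois-invariant $\Hom^i(\cP_{\Fqb},\cQ_{\Fqb})$ and yields Corollary \ref{c.wt}. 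Without that input you cannot split off composition factors of $\cP_{\Fqb}$, and the reduction to simple $\cP$ is not available.

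By contrast, your middle step — invoking Lemma \ref{l.Behrend} to reduce geometric semisimplicity of the lisse $\cF$ to a Deligne--Mumford stack — is correct but not needed: geometric semisimplicity of punctually $\iota$-pure lisse Weil sheaves holds on arbitrary stacks via Sun's Theorem 2.1(iii) (Grothendieck's radical-is-unipotent argument for monodromy groups), which is exactly what the paper invokes in Proposition \ref{p.purelisse}. The affine-stabilizer hypothesis enters only Sun's \emph{perverse} theorem (his Theorem 3.11), and only through the $R\pi_*$ weight estimate, i.e.\ at precisely the point your proposal is silent. The paper's proof of Theorem \ref{t.decomp} is therefore: prove the weight estimate and Corollary \ref{c.wt}, and then run the proof of BBD Th\'eor\`eme 5.3.8 verbatim with BBD Proposition 5.1.15 replaced by Corollary \ref{c.wt}.
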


The case of affine stabilizers is a theorem of Sun \cite[Theorem 3.11]{Sun},
extending the case of schemes \cite[Th\'eor\`eme 5.3.8]{BBD}. Note that the
decomposition theorem of pure complexes \cite[Theorem 3.12]{Sun} does
\emph{not} extend to general stacks, as shown in \cite[Section~1]{Sun}.

As in the case of schemes \cite[Proposition 5.3.9]{BBD}, Theorem
\ref{t.decomp} has the following consequence on the structure of pure
perverse sheaves. As before we let $\cE_n$ denote the $\Qlb$-sheaf on
$\Spec(\F_q)$ of stalk $\Qlb^n$ on which $\Fr_q$ acts unipotently with one
Jordan block.

\begin{cor}
Let $X$ be a stack. The indecomposable $\iota$-pure perverse Weil
$\Qlb$-sheaves on $X$ are of the form $\cP\otimes\pi_X^*\cE_n$ with $\cP$
simple, where $\pi_X\colon X\to \Spec(\F_q)$. Moreover, for every simple
perverse Weil $\Qlb$-sheaf $\cP$, there exists a unique $m\ge 1$ such that
$\cP\simeq p_* \cQ$, where $p\colon X\otimes_{\F_q} \F_{q^m}\to X$ is the
projection, $\cQ$ is geometrically simple (i.e.\ the pullback of $\cQ$ to
$X\otimes_{\F_{q^m}}\Fqb$ is simple) and not isomorphic to any of its
conjugates under $\Gal(\F_{q^m}/\F_q)$.
\end{cor}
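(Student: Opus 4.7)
The plan is to follow the strategy of BBD Proposition 5.3.9 cited in the excerpt: use Theorem \ref{t.decomp} to reduce to an analysis of the Weil-group action on the semisimple perverse sheaf $\cP_{\Fqb}$, and then perform a Clifford-style descent to the finite cover $X\otimes_{\F_q}\F_{q^m}$.

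First, by Theorem \ref{t.decomp}, write the isotypic decomposition $\cP_{\Fqb} \simeq \bigoplus_{j \in J} W_j \otimes \cQ_j$ in $\Perv(X\otimes_{\F_q}\Fqb, \Qlb)$, with the $\cQ_j$ pairwise non-isomorphic simples and the $W_j$ multiplicity vector spaces. The Frobenius isomorphism $F^*\cP_{\Fqb}\simeq \cP_{\Fqb}$ coming from the Weil structure permutes the isotypic components, and a nontrivial orbit decomposition of $J$ would split $\cP$ as a Weil perverse sheaf. Indecomposability of $\cP$ therefore forces $J$ to be a single Frobenius orbit, of some size $m$.

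Let $p\colon X\otimes_{\F_q}\F_{q^m}\to X$ be the projection. The Frobenius of $\F_{q^m}$ is $F^m$, which fixes every isotypic component, so as a Weil perverse sheaf $p^*\cP$ decomposes into $\bigoplus_j \cP_j$ with $(\cP_j)_{\Fqb} \simeq W_j \otimes \cQ_j$. Pick a simple $\cQ_0$ in the orbit; since $F^{m*}\cQ_0 \simeq \cQ_0$ and $\cQ_0$ is simple, Schur's lemma makes the set of descent data a nonempty $\Qlb^\times$-torsor, giving a descent $\cQ'$ of $\cQ_0$ to a simple Weil perverse sheaf on $X\otimes_{\F_q}\F_{q^m}$. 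Any decomposition of $\cP_0$ would propagate under the cyclic $F$-permutation of the $\cP_j$ to a decomposition of $p^*\cP$ and hence of $\cP$, so $\cP_0$ is indecomposable. Writing $\cP_0 \simeq W_0 \otimes \cQ'$ with $W_0$ a Weil $\Qlb$-sheaf on $\Spec(\F_{q^m})$, indecomposability of $\cP_0$ forces $W_0$ to be a single Jordan block $\Qlb^{(a)}\otimes\cE_n$ for some $a\in\Qlb^\times$ and $n\ge 1$. Absorbing the twist $\Qlb^{(a)}$ into a redefined descent $\cQ''$ yields $\cP_0 \simeq \cQ'' \otimes p^*\pi_X^*\cE_n$.

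The projection formula now gives $\cP \simeq p_*\cP_0 \simeq (p_*\cQ'')\otimes \pi_X^*\cE_n$, and $p_*\cQ''$ is simple because its $\Fqb$-pullback $\bigoplus_{\sigma\in\Gal(\F_{q^m}/\F_q)}\sigma^*(\cQ'')_{\Fqb}$ exhausts the Frobenius orbit $\{\cQ_j\}$ of pairwise non-isomorphic simples. This proves the first assertion. For the second, simplicity of $\cP$ forces $n=1$, yielding $\cP \simeq p_*\cQ''$ with $\cQ''$ geometrically simple and not isomorphic to any nontrivial $\Gal(\F_{q^m}/\F_q)$-conjugate; uniqueness of $m$ is intrinsic, being the $W(\Fqb/\F_q)$-orbit length of any simple summand of $\cP_{\Fqb}$. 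The main technical point is the descent step — existence of $\cQ'$, absorption of the eigenvalue $a$, and verification that $p_*\cQ''$ is genuinely simple rather than merely indecomposable — which is routine Clifford-theoretic bookkeeping once the orbit structure on $\cP_{\Fqb}$ has been established via Theorem \ref{t.decomp}.
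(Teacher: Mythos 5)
Your proposal is correct and follows the same route as the paper, which simply cites \cite[Proposition 5.3.9]{BBD} without spelling out the argument; your Clifford-theoretic decomposition (isotypic decomposition of $\cP_{\Fqb}$ via Theorem \ref{t.decomp}, transitivity of the Frobenius orbit, Schur-lemma descent of $\cQ_0$ to $X\otimes_{\F_q}\F_{q^m}$, and projection formula) is precisely the content of that reference adapted to Weil perverse sheaves on stacks. The one place worth a word more is the simplicity of $p_*\cQ''$: since $(p_*\cQ'')_{\Fqb}$ is a multiplicity-free direct sum of the $\cQ_j$ and Frobenius permutes the summands transitively, any Weil perverse subsheaf corresponds to a Frobenius-stable subset of the orbit, which must be empty or everything — but this is exactly what you are gesturing at, so the proof is sound.
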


The first assertion of the corollary still holds with $\Qlb$ replaced by a
finite (or algebraic) extension of $\Q_\ell$.

The key to the proof of Theorem \ref{t.decomp} is a weight estimate.

\begin{prop}
Let $X$ be a stack and let $\pi\colon X\to \Spec(\F_q)$ be the projection.
Let $K\in D^{\ge 0}(X,\Qlb)$ be a complex of $\iota$-weights $\ge w$ and
vanishing $i$-th cohomology for $i<0$. Then for all $i\ge 0$, $R^i \pi_* K$
has $\iota$-weights $\ge w+\lceil\frac{i}{2}\rceil$. Moreover
$H^i(X\otimes_{\F_q} \Fqb,K)^{\Gal(\Fqb/\F_q)}=0$ for $i>0$ if $w\ge 0$, and
$R\Gamma(X,K)=0$ if $w>0$.
\end{prop}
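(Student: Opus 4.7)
The plan is to reduce the weight estimate to a calculation on the cohomology of classifying stacks $BG$ for connected smooth group schemes $G$, and to combine this with Sun's treatment \cite{Sun} of the affine-stabilizer case. First, using the truncation triangles $\tau^{\le j}K \to K \to \tau^{\ge j+1}K$ and the long exact sequence of cohomology sheaves (both sides of each triangle remain in $D^{\ge 0}$ with weights $\ge w$), I reduce by induction on amplitude to the case $K = \cF[-j]$ for a single sheaf $\cF$ of punctual $\iota$-weights $\ge w+j$, and then to $j=0$ via the elementary inequality $j + \lceil (i-j)/2\rceil \ge \lceil i/2\rceil$. It then suffices to show that for a sheaf $\cF$ on $X$ with punctual $\iota$-weights $\ge w$, one has $H^k(X,\cF)$ of $\iota$-weights $\ge w + \lceil k/2\rceil$ for every $k\ge 0$.

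Next I would apply Lemma \ref{l.Behrend} (combined with Lemma \ref{l.sep}) together with a stratification of $X$ on which $\cF$ becomes lisse, reducing the problem to the case where $X$ is a gerbe $f\colon X\to Y$ over a Deligne--Mumford stack $Y$, banded by a connected smooth group scheme $G$. The Leray spectral sequence
\[E_2^{p,q} = H^p(Y, R^q f_* \cF) \Rightarrow H^{p+q}(X, \cF)\]
then separates the estimate into two inputs: (a) a bound on the DM base $Y$, coming from Deligne's theorem for schemes after passing to an étale presentation by a scheme, yielding weight $\ge (\text{weight of } R^q f_*\cF) + p$; and (b) a stalk-wise bound on the gerbe, reducing to the weights of $H^q(BG_{\bar y}, \cF_{\bar y})$ at geometric points of $Y$.

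The key new input is the third step: for a connected smooth group scheme $G$ over $\F_q$ and a lisse sheaf $\cE$ on $BG$ of punctual weight $\ge w$, one must show $H^q(BG, \cE)$ has $\iota$-weights $\ge w + \lceil q/2\rceil$. Using Chevalley's structure theorem $1 \to G^{\mathrm{aff}} \to G \to A \to 1$ with $G^{\mathrm{aff}}$ affine and $A$ an abelian variety, together with the associated fibration $BG^{\mathrm{aff}} \to BG \to BA$ and its Leray spectral sequence, this splits into the affine case (handled by Sun \cite{Sun}) and the abelian variety case. For $BA$, a direct Hochschild--Serre calculation for $A \to \Spec\F_q \to BA$ shows that $H^*(BA,\Qlb)$ is generated by classes originating from $H^{\ge 1}(A)[1]$; since $H^i(A)$ is pure of weight $i$, this produces weight $\ge \lceil k/2\rceil$ in degree $k$. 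Combining these two bounds gives $E_2^{p,q}$ of $\iota$-weights $\ge w + p + \lceil q/2\rceil \ge w + \lceil (p+q)/2\rceil$, yielding the main estimate.

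The two ``moreover'' assertions follow formally. When $w \ge 0$ and $i > 0$, every Frobenius eigenvalue on $H^i(X\otimes_{\F_q}\Fqb, K)$ has $\iota$-absolute value $> 1$, so the $\Gal(\Fqb/\F_q)$-invariants vanish. When $w > 0$, both invariants and coinvariants vanish in every degree, and the Hochschild--Serre short exact sequence relating $H^i(X,K)$ to $H^i(X\otimes_{\F_q}\Fqb,K)^{\Gal}$ and $H^{i-1}(X\otimes_{\F_q}\Fqb,K)_{\Gal}$ forces $R\Gamma(X,K)=0$. The main obstacle is the weight computation on $BA$ for an abelian variety $A$ --- the novel technical ingredient extending Sun's affine-stabilizer framework --- together with bookkeeping to promote the stalk estimate on the gerbe fibers to a genuine estimate on $R^q f_*\cF$ as a sheaf on $Y$.
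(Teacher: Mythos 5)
Your approach is genuinely different from the paper's, and it has a real gap in the reduction step. The paper's proof works as follows: stratify $X$ so that each stratum $j_\alpha\colon X_\alpha\to X$ is smooth of some dimension $d$ with $j_\alpha^!K$ having lisse cohomology, and observe that $K$ is a successive extension of $Rj_{\alpha*}Rj_\alpha^!K$; this reduces to $X$ smooth and $K = \cF[-n]$ with $\cF$ lisse. In that setting the dual condition ``$\iota$-weights $\ge w$'' is transparent (it just says $\cF$ has punctual weights $\ge w+n$), and one applies Verdier duality $(R^i\pi_*K)^\vee\simeq (R^{2d+n-i}\pi_!\cF^\vee)(d)$ followed by the upper weight bound for $H^*_c$ of Artin stacks \cite[Theorem 1.4]{SunL}.

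The gap in your argument is in the opening devissage. You claim to reduce, by naive truncation, to a sheaf $\cF$ of punctual $\iota$-weights $\ge w+j$. But the hypothesis ``$K$ of $\iota$-weights $\ge w$'' is defined via $DK$, and on a general (non-smooth) Artin stack it is \emph{not} equivalent to a punctual lower bound on the cohomology sheaves of $K$; moreover, naive truncation $\tau^{\le j}$ does not obviously preserve the $\ge w$ condition because $D$ does not commute with naive truncation in general. The paper's stratification is designed precisely to sidestep this: $j_\alpha^!$ commutes with $D$ and so preserves $\ge w$, and only once things are smooth and lisse does the truncation devissage become legitimate. A related issue affects your appeal to Lemma \ref{l.Behrend}, which supplies a gerbe structure only over a dense open substack: to handle the boundary you again need a stratification argument via $j^!$, with the same care about how $j^!$ interacts with the weight hypothesis.

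Setting aside the reduction, the heart of your proposal---the Leray spectral sequence for a gerbe $f\colon X\to Y$ over a Deligne--Mumford stack, split via Chevalley into $BG^{\mathrm{aff}}\to BG\to BA$---is a substantively different route than the paper's duality-plus-\cite[Theorem 1.4]{SunL}. It is plausible but requires considerably more verification: one must compute $H^*(BA,\Qlb)\simeq\mathrm{Sym}^*(H^1(A)[-2])$ and show it has weight $\ge\lceil k/2\rceil$ in degree $k$, justify that lisse sheaves on $BA\otimes\Fqb$ are constant so that coefficients can be split off, iterate the Leray spectral sequence through the affine layer (citing \cite{Sun}), and then take care that the DM-base contribution in $E_2^{p,q}=H^p(Y,R^qf_*\cF)$ gives the full $+p$ shift while the gerbe fiber gives only $+\lceil q/2\rceil$. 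In effect you are re-deriving a version of Sun's weight estimate from scratch; the paper avoids all of this by invoking it directly. The two ``moreover'' assertions in your proposal do match the paper's treatment.
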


The estimate is optimal. Indeed, for $X=BA$, where $A$ is an Abelian
variety, and $a$ of weight $1$, $R^i\pi_*(\Qlb\oplus \Qlb^{(a)}[-1])$ is
pure of weight $\lceil\frac{i}{2}\rceil$. Unlike the case of schemes or
stacks with affine stabilizers, $R^i\pi_* K$ is \emph{not} of
$\iota$-weights $\ge w+i$ in general.

\begin{proof}
The second assertion follows from the first one and the short exact sequence
\[0\to H^{i-1}(X\otimes_{\F_q} \Fqb,K)_{\Gal(\Fqb/\F_q)} \to H^i(X,K)\to H^{i}(X\otimes_{\F_q} \Fqb,K)^{\Gal(\Fqb/\F_q)}\to 0.\]
Note that for any stratification of $X$ into locally closed substacks
$(j_\alpha\colon X_\alpha\to X)_\alpha$ such that the closure of every
stratum is a union of strata, $K$ is a successive extension of
$Rj_{\alpha*}Rj_\alpha^! K$, with $Rj_\alpha^! K\in D^\ge 0$ of
$\iota$-weights $\ge w$. Thus we may assume that $X$ is smooth of dimension
$d$ and $K$ has lisse cohomology sheaves. We may further assume $K=\cF[-n]$,
with $\cF$ lisse of $\iota$-weights $\ge w+n$ and $n\ge 0$. Then the
$\iota$-weights of $(R^i \pi_* K)\spcheck\simeq (R^{2d+n-i}\pi_!\cF\spcheck)
(d)$ are at most
\[d+\frac{2d+n-i}{2}-(w+n)-2d=-w-\frac{i+n}{2}\le -w-\frac{i}{2}\]
by \cite[Theorem 1.4]{SunL}. We conclude by the fact that the
$\iota$-weights are in $w+\Z$.
\end{proof}

\begin{cor}\label{c.wt}
Let $X$ be a stack and let $\cP$ and $\cQ$ be perverse $\Qlb$-sheaves on
$X$, with $\cP$ of $\iota$-weights $\le w$, and $\cQ$ of $\iota$-weights
$\ge w$. Then for $i>0$,
$\Hom^i(\cP_{\Fqb},\cQ_{\Fqb})^{\Gal(\Fqb/\F_q)}=0$, so that the canonical
map $\Hom^i(\cP,\cQ)\to \Hom^i(\cP_{\Fqb},\cQ_{\Fqb})$ is zero. Moreover, if
$\cQ$ has $\iota$-weights $>w$, then $R\Hom(\cP,\cQ)=0$.
\end{cor}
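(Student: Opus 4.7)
The plan is to apply the preceding Proposition to the complex $K \colonequals R\cHom(\cP, \cQ) \in D(X, \Qlb)$, via the identity $R\Hom_X(\cP,\cQ) \simeq R\pi_* K$, where $\pi\colon X\to \Spec(\F_q)$. Under this identification $\Hom^i(\cP_{\Fqb},\cQ_{\Fqb})$, as a Weil representation, coincides with the stalk of $R^i\pi_* K$ at $\Spec(\Fqb)$, and the derived-category analogue of the short exact sequence from Remark \ref{r.ext} (1), (2) (already used in the proof of Proposition \ref{p.Weil}) shows that the canonical map $\Hom^i(\cP,\cQ)\to \Hom^i(\cP_{\Fqb},\cQ_{\Fqb})$ factors through the Galois invariants $\Hom^i(\cP_{\Fqb},\cQ_{\Fqb})^{\Gal(\Fqb/\F_q)}$. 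So it suffices to prove these invariants vanish for all $i > 0$, and that $R\pi_* K = 0$ under the stronger hypothesis.

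Next I would establish that $K$ has $\iota$-weights $\ge 0$, and $\ge 1$ under the strengthened hypothesis. This is formal from the weight yoga on stacks (as developed by Sun and used throughout Section \ref{s.1t}): Verdier duality exchanges the conditions ``weights $\le a$'' and ``weights $\ge -a$'', and the internal $R\cHom$ of constructible complexes satisfies that its lowest weight is at least the lowest weight of the target minus the highest weight of the source, which here is $w-w=0$, sharpened to $\ge 1$ when $\cQ$ has weights strictly $> w$.

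Finally, I would invoke the Proposition. Once we grant $K \in D^{\ge 0}(X,\Qlb)$, the Proposition yields that $R^i\pi_* K$ has $\iota$-weights $\ge \lceil i/2\rceil$ for all $i \ge 0$; for $i > 0$ this is $\ge 1 > 0$, so the Frobenius invariants on $R^i\pi_* K$ vanish, giving the first assertion. Under the strengthened hypothesis $K$ has strictly positive weights throughout, and the second half of the Proposition gives $R\pi_* K = 0$, equivalently $R\Hom(\cP,\cQ) = 0$, which is the ``moreover''.

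The point that requires the most care — and which I expect to be the main technical obstacle — is the verification that $K = R\cHom(\cP,\cQ)$ lies in $D^{\ge 0}$ in the standard $t$-structure, so that the Proposition applies as stated. I expect this to follow from the perversity of $\cP$ and $\cQ$ by a local check, reducing via stratifications to the smooth lisse case where $R\cHom(\cF[d],\cG[d])\simeq\cF^\vee\otimes\cG$ sits in degree $0$. If a direct verification turns out delicate, one can instead work with the perverse $t$-structure — since $R\cHom(\cP,\cQ)\in{}^p D^{\ge 0}$ is immediate from $\cP,\cQ$ being perverse — and either adapt the Proposition's proof to this setting or filter $K$ by its perverse cohomology sheaves $\pH^j K$ and apply the Proposition to each.
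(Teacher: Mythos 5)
Your proposal is correct and follows exactly the paper's argument: apply the preceding Proposition to $K = R\cHom(\cP,\cQ)$, observing that $K\in D^{\ge 0}(X,\Qlb)$ with $\iota$-weights $\ge 0$ (resp.\ $>0$). The point you flag as a potential obstacle, namely $R\cHom(\cP,\cQ)\in D^{\ge 0}$ for the standard $t$-structure, is in fact a standard consequence of perversity that the paper asserts without comment (it is local and reduces to the lisse case on a smooth stratum exactly as you sketch), so no detour through the perverse $t$-structure is needed.
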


For perverse Weil $\Qlb$-sheaves and $i=1$, the first assertion holds with
$\Hom^1$ replaced by $\Ext^1$ and $\Gal(\Fqb/\F_q)$ replaced by
$W(\Fqb/\F_q)$.

\begin{proof}
We apply the proposition to $K=R\cHom(\cP,\cQ) \in D^{\ge 0}(X,\Qlb)$, which
has $\iota$-weights $\ge 0$. If $\cQ$ has $\iota$-weights $>w$, then $K$ has
$\iota$-weights $>0$.
\end{proof}

The proof of Theorem \ref{t.decomp} is then identical to the proof of
\cite[Th\'eor\`eme 5.3.8]{BBD}, with \cite[Proposition 5.1.15]{BBD} replaced
by Corollary \ref{c.wt}.

\begin{bibdiv}
\begin{biblist}
\bib{EGAI}{article}{
   label={EGAI},
   author={Grothendieck, A.},
   title={\'El\'ements de g\'eom\'etrie alg\'ebrique. I. Le langage des sch\'emas},
   journal={Inst. Hautes \'Etudes Sci. Publ. Math.},
   number={4},
   date={1960},
   pages={5--228},
   note={R\'edig\'es avec la collaboration de J. Dieudonn\'e},
   issn={0073-8301},
   review={\MR{0217083}},
}

\bib{SGA1}{collection}{
    label={SGA1},
   title={Rev\^etements \'etales et groupe fondamental (SGA 1)},
   language={French},
   series={Documents Math\'ematiques (Paris) [Mathematical Documents (Paris)]},
   volume={3},
   note={S\'eminaire de g\'eom\'etrie alg\'ebrique du Bois Marie 1960--61.
   [Algebraic Geometry Seminar of Bois Marie 1960-61];
   Directed by A. Grothendieck;
   With two papers by M. Raynaud;
   Updated and annotated reprint of the 1971 original [Lecture Notes in
   Math., 224, Springer, Berlin;  MR0354651 (50 \#7129)]},
   publisher={Soci\'et\'e Math\'ematique de France, Paris},
   date={2003},
   pages={xviii+327},
   isbn={2-85629-141-4},
   review={\MR{2017446}},
}

\bib{SGA4}{book}{
   title={Th\'eorie des topos et cohomologie \'etale des sch\'emas},
   series={Lecture Notes in Mathematics, Vol. 269, 270, 305},
   note={S\'eminaire de G\'eom\'etrie Alg\'ebrique du Bois-Marie 1963--1964
   (SGA 4).
   Dirig\'e par M. Artin, A. Grothendieck, et J. L. Verdier. Avec la
   collaboration de N. Bourbaki, P. Deligne et B. Saint-Donat},
   publisher={Springer-Verlag},
   place={Berlin, 1972--1973},
   review={\MR{0354652 (50 \#7130)}},
   review={\MR{0354653 (50 \#7131)}},
   review={\MR{0354654 (50 \#7132)}},
   label={SGA4},
}

\bib{SGA7II}{book}{
   title={Groupes de monodromie en g\'eom\'etrie alg\'ebrique. II},
   author={Deligne, Pierre},
   author={Katz, Nicholas},
   language={French},
   series={Lecture Notes in Mathematics, Vol. 340},
   note={S\'eminaire de G\'eom\'etrie Alg\'ebrique du Bois-Marie 1967--1969 (SGA 7
   II)},
   publisher={Springer-Verlag, Berlin-New York},
   date={1973},
   pages={x+438},
   review={\MR{0354657}},
   label={SGA7II},
}

\bib{Abe}{article}{
    author={Abe, Tomoyuki},
    title={Langlands correspondence for isocrystals and existence of crystalline companion for curves},
    journal={J. Amer.\ Math.\ Soc.},
    doi={10.1090/jams/898},
}

\bib{Behrend}{article}{
   author={Behrend, Kai A.},
   title={Derived $l$-adic categories for algebraic stacks},
   journal={Mem. Amer. Math. Soc.},
   volume={163},
   date={2003},
   number={774},
   pages={viii+93},
   issn={0065-9266},
   review={\MR{1963494 (2004e:14006)}},
   doi={10.1090/memo/0774},
}

\bib{BBD}{article}{
   author={Be{\u\i}linson, A. A.},
   author={Bernstein, J.},
   author={Deligne, P.},
   title={Faisceaux pervers},
   language={French},
   conference={
      title={Analysis and topology on singular spaces, I},
      address={Luminy},
      date={1981},
   },
   book={
      series={Ast\'erisque},
      volume={100},
      publisher={Soc. Math. France},
      place={Paris},
   },
   date={1982},
   pages={5--171},
   review={\MR{751966 (86g:32015)}},
}

\bib{BR}{article}{
   author={B\'enabou, Jean},
   author={Roubaud, Jacques},
   title={Monades et descente},
   language={French},
   journal={C. R. Acad. Sci. Paris S\'er. A-B},
   volume={270},
   date={1970},
   pages={A96--A98},
   review={\MR{0255631}},
}

\bib{BAlg}{book}{
   author={Bourbaki, N.},
   title={\'El\'ements de math\'ematique. Alg\`ebre. Chapitres 4 \`a 7},
   language={French},
   publisher={Springer},
   date={2007},
   note={R\'eimpression de l'\'edition de 1981},
   label={B1},
}

\bib{BAlg8}{book}{
   author={Bourbaki, N.},
   title={\'El\'ements de math\'ematique. Alg\`ebre. Chapitre 8. Modules et anneaux
   semi-simples},
   language={French},
   note={Second revised edition of the 1958 edition [MR0098114]},
   publisher={Springer, Berlin},
   date={2012},
   pages={x+489},
   isbn={978-3-540-35315-7},
   isbn={978-3-540-35316-4},
   review={\MR{3027127}},
   doi={10.1007/978-3-540-35316-4},
   label={B2},
}

\bib{Chin}{article}{
   author={Chin, CheeWhye},
   title={Independence of $\ell$ in Lafforgue's theorem},
   journal={Adv. Math.},
   volume={180},
   date={2003},
   number={1},
   pages={64--86},
   issn={0001-8708},
   review={\MR{2019215 (2005b:14041)}},
   doi={10.1016/S0001-8708(02)00082-8},
}

\bib{CR}{book}{
   author={Curtis, Charles W.},
   author={Reiner, Irving},
   title={Representation theory of finite groups and associative algebras},
   series={Pure and Applied Mathematics, Vol. XI},
   publisher={Interscience Publishers, a division of John Wiley \& Sons, New
   York-London},
   date={1962},
   pages={xiv+685},
   review={\MR{0144979}},
}

\bib{Delignec}{article}{
   author={Deligne, Pierre},
   title={Les constantes des \'equations fonctionnelles des fonctions $L$},
   language={French},
   conference={
      title={Modular functions of one variable, II},
      address={Proc. Internat. Summer School, Univ. Antwerp, Antwerp},
      date={1972},
   },
   book={
      publisher={Springer, Berlin},
   },
   date={1973},
   pages={501--597. Lecture Notes in Math., Vol. 349},
   review={\MR{0349635}},
}

\bib{WeilII}{article}{
   author={Deligne, Pierre},
   title={La conjecture de Weil. II},
   language={French},
   journal={Inst. Hautes \'Etudes Sci. Publ. Math.},
   number={52},
   date={1980},
   pages={137--252},
   issn={0073-8301},
   review={\MR{601520 (83c:14017)}},
}

\bib{Deligne}{article}{
   author={Deligne, Pierre},
   title={Finitude de l'extension de $\mathbb{Q}$ engendr\'ee par des traces de
   Frobenius, en caract\'eristique finie},
   language={French, with French and Russian summaries},
   journal={Mosc. Math. J.},
   volume={12},
   date={2012},
   number={3},
   pages={497--514, 668},
   issn={1609-3321},
   review={\MR{3024820}},
}

\bib{Dr}{article}{
   author={Drinfeld, Vladimir},
   title={On a conjecture of Deligne},
   language={English, with English and Russian summaries},
   journal={Mosc. Math. J.},
   volume={12},
   date={2012},
   number={3},
   pages={515--542, 668},
   issn={1609-3321},
   review={\MR{3024821}},
}

\bib{DK}{article}{
    author={Drinfeld, Vladimir},
    author={Kedlaya, Kiran},
    title={Slopes of indecomposable $F$-isocrystals},
    note={arXiv:1604.00660v9, preprint},
    year={2018},
}

\bib{EK}{article}{
   author={Esnault, H{\'e}l{\`e}ne},
   author={Kerz, Moritz},
   title={A finiteness theorem for Galois representations of function fields
   over finite fields (after Deligne)},
   journal={Acta Math. Vietnam.},
   volume={37},
   date={2012},
   number={4},
   pages={531--562},
   issn={0251-4184},
   review={\MR{3058662}},
}

\bib{Gabber}{article}{
   author={Fujiwara, Kazuhiro},
   title={Independence of $l$ for intersection cohomology (after Gabber)},
   conference={
      title={Algebraic geometry 2000, Azumino (Hotaka)},
   },
   book={
      series={Adv. Stud. Pure Math.},
      volume={36},
      publisher={Math. Soc. Japan},
      place={Tokyo},
   },
   date={2002},
   pages={145--151},
   review={\MR{1971515 (2004c:14038)}},
}

\bib{Giraud}{article}{
   author={Giraud, Jean},
   title={M\'ethode de la descente},
   language={French},
   journal={Bull. Soc. Math. France M\'em.},
   volume={2},
   date={1964},
   pages={viii+150},
   review={\MR{0190142}},
}

\bib{IllMisc}{article}{
   author={Illusie, Luc},
   title={Miscellany on traces in $\ell$-adic cohomology: a survey},
   journal={Jpn. J. Math.},
   volume={1},
   date={2006},
   number={1},
   pages={107--136},
   issn={0289-2316},
   review={\MR{2261063}},
   doi={10.1007/s11537-006-0504-3},
}

\bib{IZ}{article}{
   author={Illusie, Luc},
   author={Zheng, Weizhe},
   title={Quotient stacks and equivariant \'etale cohomology algebras:
   Quillen's theory revisited},
   journal={J. Algebraic Geom.},
   volume={25},
   date={2016},
   number={2},
   pages={289--400},
   issn={1056-3911},
   review={\MR{3466353}},
}

\bib{Knutson}{book}{
   author={Knutson, Donald},
   title={Algebraic spaces},
   series={Lecture Notes in Mathematics, Vol. 203},
   publisher={Springer-Verlag, Berlin-New York},
   date={1971},
   pages={vi+261},
   review={\MR{0302647}},
}

\bib{Lafforgue}{article}{
   author={Lafforgue, Laurent},
   title={Chtoucas de Drinfeld et correspondance de Langlands},
   language={French, with English and French summaries},
   journal={Invent. Math.},
   volume={147},
   date={2002},
   number={1},
   pages={1--241},
   issn={0020-9910},
   review={\MR{1875184 (2002m:11039)}},
   doi={10.1007/s002220100174},
}

\bib{VLaff}{article}{
   author={Lafforgue, Vincent},
   title={Estim\'ees pour les valuations $p$-adiques des valeurs propes des
   op\'erateurs de Hecke},
   language={French, with English and French summaries},
   journal={Bull. Soc. Math. France},
   volume={139},
   date={2011},
   number={4},
   pages={455--477},
   issn={0037-9484},
   review={\MR{2869300}},
}

\bib{LaumonTF}{article}{
   author={Laumon, G{\'e}rard},
   title={Transformation de Fourier, constantes d'\'equations fonctionnelles
   et conjecture de Weil},
   language={French},
   journal={Inst. Hautes \'Etudes Sci. Publ. Math.},
   number={65},
   date={1987},
   pages={131--210},
   issn={0073-8301},
   review={\MR{908218 (88g:14019)}},
}

\bib{LMB}{book}{
   author={Laumon, G{\'e}rard},
   author={Moret-Bailly, Laurent},
   title={Champs alg\'ebriques},
   series={Ergebnisse der Mathematik und ihrer Grenzgebiete. 3. Folge. A
   Series of Modern Surveys in Mathematics},
   volume={39},
   publisher={Springer-Verlag},
   place={Berlin},
   date={2000},
   pages={xii+208},
   isbn={3-540-65761-4},
   review={\MR{1771927 (2001f:14006)}},
}

\bib{Mumford}{book}{
   author={Mumford, David},
   title={Abelian varieties},
   series={Tata Institute of Fundamental Research Studies in Mathematics},
   volume={5},
   note={With appendices by C. P. Ramanujam and Yuri Manin;
   Corrected reprint of the second (1974) edition},
   publisher={Published for the Tata Institute of Fundamental Research,
   Bombay; by Hindustan Book Agency, New Delhi},
   date={2008},
   pages={xii+263},
   isbn={978-81-85931-86-9},
   isbn={81-85931-86-0},
   review={\MR{2514037 (2010e:14040)}},
}

\bib{Noohi}{article}{
   author={Noohi, B.},
   title={Fundamental groups of algebraic stacks},
   journal={J. Inst. Math. Jussieu},
   volume={3},
   date={2004},
   number={1},
   pages={69--103},
   issn={1474-7480},
   review={\MR{2036598}},
   doi={10.1017/S1474748004000039},
}

\bib{Rydh}{article}{
   author={Rydh, David},
   title={Submersions and effective descent of \'etale morphisms},
   language={English, with English and French summaries},
   journal={Bull. Soc. Math. France},
   volume={138},
   date={2010},
   number={2},
   pages={181--230},
   issn={0037-9484},
   review={\MR{2679038}},
}

\bib{SP}{book}{
  author       = {The Stacks Project Authors},
  title        = {Stacks Project},
  note         = {\url{http://stacks.math.columbia.edu}},
  label ={SP},
}

\bib{SunL}{article}{
   author={Sun, Shenghao},
   title={$L$-series of Artin stacks over finite fields},
   journal={Algebra Number Theory},
   volume={6},
   date={2012},
   number={1},
   pages={47--122},
   issn={1937-0652},
   review={\MR{2950161}},
   doi={10.2140/ant.2012.6.47},
}

\bib{Sun}{article}{
   author={Sun, Shenghao},
   title={Decomposition theorem for perverse sheaves on Artin stacks over
   finite fields},
   journal={Duke Math. J.},
   volume={161},
   date={2012},
   number={12},
   pages={2297--2310},
   issn={0012-7094},
   review={\MR{2972459}},
   doi={10.1215/00127094-1723657},
}

\bib{SunSupp}{article}{
    author={Sun, Shenghao},
    title={Independence of $\ell$ for the supports in the decomposition theorem},
    note={arXiv:1512.01447v1, preprint},
    year={2015},
}

\bib{SZ}{article}{
   author={Sun, Shenghao},
   author={Zheng, Weizhe},
   title={Parity and symmetry in intersection and ordinary cohomology},
   journal={Algebra Number Theory},
   volume={10},
   date={2016},
   number={2},
   pages={235--307},
   issn={1937-0652},
   review={\MR{3477743}},
   doi={10.2140/ant.2016.10.235},
}

\bib{Voe}{article}{
   author={Voevodsky, V.},
   title={Homology of schemes},
   journal={Selecta Math. (N.S.)},
   volume={2},
   date={1996},
   number={1},
   pages={111--153},
   issn={1022-1824},
   review={\MR{1403354}},
   doi={10.1007/BF01587941},
}

\bib{M2}{article}{
   author={Zheng, Weizhe},
   title={Th\'eor\`eme de Gabber d'ind\'ependance de $l$},
   language={French},
   note={M\'emoire de Master deuxi\`eme ann\'ee, \href{http://arXiv.org/abs/1608.06191}{arXiv:1608.06191}},
   year={2005},
}
		
\bib{Zint}{article}{
   author={Zheng, Weizhe},
   title={Sur la cohomologie des faisceaux $l$-adiques entiers sur les corps
   locaux},
   language={French, with English and French summaries},
   journal={Bull. Soc. Math. France},
   volume={136},
   date={2008},
   number={3},
   pages={465--503},
   issn={0037-9484},
   review={\MR{2415350 (2009d:14015)}},
}

\bib{Zind}{article}{
   author={Zheng, Weizhe},
   title={Sur l'ind\'ependance de $l$ en cohomologie $l$-adique sur les
   corps locaux},
   language={French, with English and French summaries},
   journal={Ann. Sci. \'Ec. Norm. Sup\'er. (4)},
   volume={42},
   date={2009},
   number={2},
   pages={291--334},
   issn={0012-9593},
   review={\MR{2518080 (2010i:14032)}},
}

\end{biblist}
\end{bibdiv}

\end{document}